\documentclass[bj,preprint,a4paper]{imsart}

\usepackage{amsthm,amsmath,amsbsy,amsfonts,amssymb}
\usepackage[numbers]{natbib}
\usepackage{graphicx,verbatim}
\usepackage{booktabs,stmaryrd}

\usepackage{algorithm}
\usepackage{algpseudocode}

\RequirePackage[colorlinks,citecolor=blue,urlcolor=blue]{hyperref}


\usepackage{amsmath,examplep,wasysym}
\usepackage{a4wide}
\usepackage{dsfont}
\usepackage{amssymb , amsmath, amsthm, amsopn, amstext, amscd}
\usepackage{graphicx}
\usepackage{multirow}

\DeclareMathOperator{\sign}{sgn}

\DeclareMathOperator*{\argmin}{arg\,min}

\def\hat{\widehat}
\usepackage[usenames,dvipsnames]{color}
\newcommand{\cnew}[1]{\textcolor{red}{#1}}

\newcommand{\rk}{\operatorname{rank}}

\newcommand{\mr}{\mathbb{R}}

\newcommand{\mn}{\mathbb{N}}

\newcommand{\betaz}{\bbeta^*}
\newcommand{\betazT}{\bbeta^{*,T}}

\newcommand{\betae}{\hatbbeta}
\newcommand{\betaL}{\hat\bbeta{}^{\rm Lasso}_\lambda}

\newcommand{\eps}{\epsilon}
\newcommand{\beps}{\boldsymbol{\epsilon}}

\newcommand{\bbeta}{\boldsymbol{\beta}}
\newcommand{\bzeta}{\boldsymbol{\zeta}}
\newcommand{\barbbeta}{\;\bar{\!\!\boldsymbol{\beta}}}
\newcommand{\hatbbeta}{\,\hat{\!\boldsymbol{\beta}}}

\newcommand{\bdelta}{\boldsymbol{\delta}}
\newcommand{\bomega}{\boldsymbol{\omega}}
\newcommand{\bxi}{\boldsymbol{\xi}}

\newcommand{\bfX}{\mathbf X}
\newcommand{\bfA}{\mathbf A}
\newcommand{\bfE}{\mathbf E}

\newcommand{\bfP}{\mathbf P}
\newcommand{\bfI}{\mathbf I}
\newcommand{\bfU}{\mathbf U}
\newcommand{\bfV}{\mathbf V}
\newcommand{\bLambda}{\boldsymbol\Lambda}

\newcommand{\ba}{\mathbf a}
\newcommand{\be}{\boldsymbol e}
\newcommand{\bs}{\boldsymbol s}
\newcommand{\bt}{\boldsymbol t}
\newcommand{\bu}{\boldsymbol u}
\newcommand{\bv}{\boldsymbol v}
\newcommand{\boldf}{\boldsymbol f}
\newcommand{\boldg}{\boldsymbol g}

\newcommand{\bfs}{\mathbf s}

\newcommand{\bx}{\boldsymbol x}

\newcommand{\by}{\boldsymbol y}

\newtheorem{theorem}{Theorem}[]
\newtheorem{proposition}{Proposition}[]

\newtheorem{corollary}{Corollary}[]

\theoremstyle{remark}
\newtheorem{remark}{Remark}[]
\newtheorem{example}{Example}[]
\newtheorem{definition}{Definition}[]

\volume{23}
\issue{1}
\pubyear{2017}
\firstpage{552}
\lastpage{581}
\doi{10.3150/15-BEJ756}



\usepackage{pdfsync}

\usepackage{tikz}


\parindent=0pt
\parskip=4pt
\setlength\textheight{22cm}

\begin{document}

\begin{frontmatter}
\title{On the prediction performance of the Lasso}
\runtitle{Refined analysis of the Lasso}

\begin{aug}
\author{\fnms{Arnak S.} \snm{Dalalyan}\thanksref{a},\ead[label=e1]{arnak.dalalyan@ensae.fr}
\ead[label=u1,url]{http://arnak-dalalyan.fr/}}
\author{\fnms{Mohamed} \snm{Hebiri}\thanksref{b},\ead[label=e2]{mohamed.hebiri@univ-mlv.fr}
\ead[label=u2,url]{http://perso-math.univ-mlv.fr/users/hebiri.mohamed/}}
\and
\author{\fnms{Johannes} \snm{Lederer}\thanksref{c}\ead[label=e3]{johanneslederer@cornell.edu}
\ead[label=u3,url]{http://www.johanneslederer.de/}}

\address[a]{ENSAE-CREST, 3 Avenue Pierre Larousse, 92240 Malakoff, France.
\printead{e1}}

\address[b]{Universit\'e Paris-Est -- Marne-la-Vall\'ee, 5 boulevard Descartes, Champs sur Marne, \\ 77454 Marne-la-Vall\'ee, Cedex 2 France.
\printead{e2}}

\address[c]{Cornell University, 1188 Comstock Hall, Ithaca, NY 14853-2601.
\printead{e3}}

\runauthor{Dalalyan, Hebiri, and Lederer}

\affiliation{ENSAE-CREST, Universit\'e Paris Est, and Cornell University}

\end{aug}

\received{\smonth{10} \syear{2014}}
%
\revised{\smonth{7} \syear{2015}}

\begin{abstract}
Although the Lasso has been extensively studied, the relationship between its prediction performance
and the correlations of the covariates is not fully understood. In this paper, we give new
insights into this relationship in the context of multiple linear regression. We show, in particular, that the incorporation of a simple
correlation measure into the tuning parameter can lead to a nearly optimal prediction performance of the Lasso even for highly
correlated covariates. However, we also reveal that for moderately correlated covariates, the prediction performance of the
Lasso can be mediocre irrespective of the choice of the tuning parameter. We finally show that our results also lead to near-optimal
rates for the least-squares estimator with total variation penalty.
\end{abstract}

\begin{keyword}[class=AMS]
\kwd[primary ]{62G08}
\kwd[; secondary ]{62C20,62G05,62G20}
\end{keyword}

\begin{keyword}
\kwd{multiple linear regression}
\kwd{sparse recovery}
\kwd{total variation penalty}
\kwd{oracle inequalities}
\end{keyword}

\end{frontmatter}

\date{}

\maketitle

\section{Introduction}
In recent years, considerable effort has been devoted to establishing sharp theoretical guarantees for the prediction performance\footnote{Following \cite{BRT}, the term prediction performance is understood here as the magnitude of the risk measured by the
prediction loss $\frac1n\|\bfX(\betae-\betaz)\|_2^2$. This is not to be confused with the understanding of this term
 more common in machine learning literature, where the aim is to predict the label of a new unlabeled feature.}  of the Lasso \cite{Tibshirani-LASSO}.
Although there are already risk bounds for a variety of settings~\cite{Koltchinskii_book,Buhlmann11},
the prediction performance of the Lasso is still not completely understood. In this paper, we review and improve the sharpest known risk
bounds to gain new insight into the prediction performance of the Lasso.

Our approach is valid for a broad class of models, but to avoid digression, we study the prediction performance of the Lasso only for  Gaussian linear regression models with deterministic design. More specifically, we consider data consisting of $n$ random observations $y_1,\dots, y_n\in\mr$ and $p$ fixed covariates $\bx^1,\ldots,\bx^p\in\mr^{n}$. We further assume that there is a regression vector $\bbeta^*\in\mr^p$ and a noise level $\sigma^*>0$ such that the residuals $y_i-\beta_1^*(\bx^1)_i-\ldots-\beta^*_p(\bx^p)_i$ are identically and independently distributed according to  a centered Gaussian distribution with variance $\sigma^*{}^2$. In vector notation, this reads
\begin{equation}\label{model}
\by = \bfX\bbeta^*+\bxi,\qquad \bxi\sim \sigma^*\mathcal N_n(0,\bfI_n),
\end{equation}
where $\by:=(y_1,\dots, y_n)^\top\in\mr^n$ is the response vector, $\bfX:=(\bx^1,\ldots,\bx^p)\in\mr^{n\times p}$ the design matrix (for which we assume, without loss of generality, that $\|\bx^j\|_2^2\le n$ for all $j\in\{1,\dots,p\}$), $\xi\in\mr^n$ the noise vector, and $\bfI_n$ denotes the identity matrix in $\mr^{n\times n}$. To keep the exposition simple, we restrict ourselves to Gaussian distributions for the noise vector and to fixed covariates, but our results extend to more general classes of distributions and, if the results are understood conditionally on the covariates, also hold for random covariates. Next, we recall that the Lasso is any solution of the convex optimization problem
\begin{equation}\label{lasso}
\betaL \in \text{arg}\min_{\bbeta} \Big\{\frac{1}{2n}\|\by-\bfX\bbeta\|_2^2+\lambda\|\bbeta\|_1\Big\},
\end{equation}
that can be efficiently solved even for very large values of $p$ and $n$ \cite{Efron-LARS,BachJMO12}.
The magnitude of the tuning parameter $\lambda>0$ determines the amount of penalization and, therefore, has a crucial influence on the performance of the Lasso.
Note that, in particular for high dimensional models where $p>n$, there are typically multiple solutions of~\eqref{lasso}; however, since $\bfX\betae_\lambda=\bfX\betae'_\lambda$ for any two solution~$\betae_\lambda,~\betae'_\lambda$, all solutions have the same prediction performance.

In this paper, we study the prediction performance of the Lasso and make, in particular, the following five contributions:
\begin{enumerate}

\item Numerous empirical results indicate that the prediction error of the Lasso with the
universal tuning parameter $\lambda = \sqrt{2\log(p)/n}$ is at most proportional to $\frac{\log(p)}{n}\times\rk(\bfX)$. Equation~\eqref{eq:2.51} of this paper is the first theoretical confirmation of this conjecture.

\item
For sparse vectors $\betaz$ with support $J^*=\{j\in\{1,\ldots,p\}:\beta^*_j\not=0\}$ and for covariates that are strongly correlated in the sense that all irrelevant covariates $\{\bx^j:j\not\in J^*\}$ are close to the linear span of relevant covariates $\{\bx^j:j\in J^*\}$, empirical results suggest that
the smallest prediction loss is obtained choosing a tuning parameter $\lambda$ that is substantially smaller than the universal one. The influence of correlations on the prediction performance of the Lasso was first considered in~\cite{vdGeer11,HebiriLederer}, where tuning parameters smaller than the classical ones are suggested if the covariates are correlated. In particular, for rates of convergence substantially faster than the slow rate $(s^*/n)^{1/2}$ (where $s^*=|J^*|$), their results suggest to incorporate the geometry of the covariates via a function of the entropy numbers of the symmetric convex hull of the covariates into the tuning parameters. One contribution of this paper is to advance and complement these results: First, we introduce a new measure for the geometry of the covariates. In contrast to earlier measures, the new measure is computable and allows for an exhaustive characterization of Lasso prediction as a function of the correlations. Second, Corollary~\ref{cor:slow} establishes that even with the universal tuning parameter (which does not incorporate the geometry of the covariates), Lasso can have fast rates of convergence in highly correlated settings.

These results relate Lasso prediction with the choice of the tuning parameter and, in particular, provide a description of the Lasso prediction performance for optimal tuning parameters. We expect, therefore, that our results could be complemented by the current research on tuning parameter calibration~\cite{Belloni2011,Lederer14a}.

\item For really sparse vectors, that is, for $s^*$ considerably smaller than $n$ (for example, $s^*$ is fixed and $n\to\infty$),
there are methods that satisfy fast rate bounds for prediction irrespective of the correlations of the
covariates~\cite{BTWAggSOI,DT07,RigTsy11,DT12a,DT12b}. Fast rate bounds for Lasso prediction, in contrast, usually rely on
assumptions on the correlations of the covariates such as low coherence \cite{CandesPlan09}, restricted eigenvalues \cite{BRT,Raskutti10},
restricted isometry \cite{CandesTao07}, compatibility \cite{vandeGeer07}, cone invertibilty~\cite{Ye10}, \textit{etc.} For Lasso prediction,
it is therefore not known whether fast rate bounds are available irrespective of the correlations of the covariates. This question is open
even if we allow for oracle choices of the tuning parameter $\lambda$, that is, if we allow for $\lambda$ that depend on the true regression
vector $\betaz$, the noise vector $\bxi$, and the noise level $\sigma^*$. In the present work, we give a negative respose to this question in  Exemple~\ref{example1}.

\item Known results imply fast rates for prediction with the Lasso in the following two extreme cases:
First, when the covariates are mutually orthogonal, and second, when the covariates are all collinear. But how far from these two extreme cases can a design be such that it still permits fast rates for prediction with the Lasso? For the first case, the case of mutually orthogonal covariates, this question has been thoroughly studied~\cite{BRT,BTW07,Zhang09,VandeGeerConditionLasso09,Wainwright09,Cai2010,JudNem11b}. For the second case, the case of collinear covariates, this question has received much less attention. Therefore, this question is one of our main topics, and we give answers in Corollary~\ref{cor:slow} and Proposition~\ref{prop:4.1}.

\item We finally show that our new Lasso prediction guarantees also lead to optimal guarantees for prediction with total variation penalties. Total variation penalties can enforce similarities between neighboring pixels or between values of signals and are therefore popular for image denoising and signal processing. However, the known theoretical results for prediction with total variation penalties are fragmentary. Using completely new probabilistic approaches, we relate prediction with total variation penalties with our prediction bounds for the Lasso. This allows us to state in Propositions~\ref{prop:3.2}, \ref{prop:4.2}, and~\ref{prop:4.3} and in Equation~\eqref{eq:riskTV1}  near-optimal guarantees for prediction with total variation penalties for a large variety of settings.

\end{enumerate}

Let us stress that we focus only on the behavior of the Lasso in terms of the prediction loss and do not explore here such important
aspects of the Lasso as variable selection and estimation.
Moreover, while we give some insights into computational aspects, this work is essentially a theoretical contribution.
\subsection{Notation} Throughout the paper, for every integer $k\in\mn$, we set $[k]=\{1,\ldots,k\}$. For every $q\in[0,\infty]$, we
denote by $\|\bu\|_q$ the usual $\ell_q$-(quasi)norm of a vector $\bu\in\mr^k$, that is
$$
\|\bu\|_q =
\begin{cases}
\text{Card}(\{j:u_j\not=0\}), &q=0, \\
(\sum_{j\in [k]}|u_j|^q)^{1/q},& 0<q<\infty,\\
\max_{j\in[k]} |u_j|,& q=\infty.
\end{cases}
$$
For any set $T\subset [p]$, we denote by $T^c$ and $|T|$ the complementary set $[p]\setminus T$
and the cardinality of $T$, respectively. For every matrix $\bfA\in \mr^{p\times q}$ and any subset $T$ of $[q]$, we denote
by $\bfA_T$ the matrix obtained from $\bfA$ by removing all the columns belonging to $T^c$. For a vector $\bu\in\mr^p$ and a
set $T\subset [p]$, $\bu_T$ is the vector obtained from $\bu$ by removing all the coordinates belonging to $T^c$.
The transpose and the Moore-Penrose pseudoinverse of a matrix $\bfA$ are denoted by $\bfA^\top$ and $\bfA^\dag$,
respectively. For two vectors $\bu$ and $\bu'$ of the same dimension $p$, we define $\odot$ as the coordinatewise
product, that is $\bu\odot\bu' = (u_1u_1',\ldots,u_p u_p')^\top$. We write $\mathbf 1_p$ (resp.\ $\mathbf 0_p$)
for the vector of $\mr^p$ having all coordinates equal to one (resp.\ zero). For the design matrix $\bfX$ and any subset $T$
of $[p]$, we denote by $V_T$ the linear subspace of $\mr^n$ spanned by the columns of $\bfX_T$. Further, we denote
by $\Pi_T$ the orthogonal projector onto $V_T$ and by $\rho_T $ the maximal Euclidean distance between the
normalized columns of $\bfX$ and the set $V_T$, that is,
\begin{equation}\label{mainquantity}
\rho_T :=\max_{j\in[p]} \min_{\bv\in V_T}\|\bv-n^{-1/2}\bx^j\|_2 = n^{-1/2}\max_{j\in[p]} \|(\bfI_n-\Pi_T)\bx^j\|_2.
\end{equation}
For two vectors $\bbeta,\bbeta'\in\mr^p$, we denote by $\ell_n(\bbeta,\bbeta')$ the prediction loss $\frac1n\|\bfX(\bbeta-\bbeta')\|_2^2$.
In all the asymptotic considerations, we will write $a_n\lesssim b_n$ for two positive sequences $(a_n)$ and $(b_n)$ when
for some $c\in(0,\infty)$ it holds that $\varlimsup_{n\to\infty} (a_n/b_n) \le c$. Further, we write $a_n\asymp b_n$
for two sequences $(a_n)$ and $(b_n)$ which are of the same order, that is $a_n\lesssim b_n\lesssim a_n$.

\subsection{Outline of the paper} The rest of this work is organized as follows.
The next section presents some new risk bounds for the prediction risk of the Lasso under no condition on the covariates.
These results provide an answer to the first question above. Section~\ref{sec:fast} is devoted to some refinements of the sharp sparsity oracle inequalities
with fast rates based on compatibility factors\cnew{~\cite{SunZhang}}. They imply, in particular, that the total variation estimator of piecewise
constant signals is nearly rate optimal. We present in Section~\ref{limits} an example
showing that for some particularly unfavorable design matrices it is impossible to get rates faster than $1/\sqrt{n}$, even if
$|J^*|$ is very small.  ``Slow'' rates
that involve the quantity $\rho_T$, accounting for the severity of the correlations within covariates, are
developed in Section~\ref{sec:slow}. In particular, they allow us to answer the second and the fourth questions raised
in the Introduction. Discussion with related work and further remarks
are placed in Section~\ref{sec:discussion}. We summarize the contributions of this work and outline some open questions
in Section~\ref{sec:conclusion}. The proofs of all the results stated in the paper are deferred to Section~\ref{sec:proofs}.

\section{Fast rates for Lasso projections}\label{sec:2}
The goal here is to present some new results concerning the accuracy of the Lasso in terms of the prediction loss
when almost no assumption on the relationship between the covariates is required. In particular, we will show that
the estimator $\bfX\betaL$ of the mean $\bfX\betaz$ of the vector $\by$, when projected on a subspace
of $\mr^n$ spanned by a small number of columns of $\bfX$, achieves fast rates of convergence provided that $\lambda$
is of the order of $n^{-1/2}$. This will be complemented in Section \ref{sec:slow}, were we establish new results
characterizing the so called slow rates for the Lasso and show that, in some circumstances, these rates may be significantly
faster than $(s^*/n)^{1/2}$.

We begin by discussing one of the main points that contrasts our approach with the previous ones used in the literature.
Let $T$ be a subset of $[p]$ which may be the set of relevant covariates or any other set.
Let $\Pi_T = \bfX_T (\bfX_T^\top \bfX_T)^\dag \bfX_T^\top$ be the orthogonal projector onto the subspace
of $\mr^n$ spanned by the columns of $\bfX_T$. An idea underpinning our results below is that when only noisy
observations of the vector $\bfX\betaz$ are available, it is practically impossible to make the difference
between the true vector $\betaz$ and the vector $\betazT$ defined by the relations
$$
\betazT_T = \betaz_T+(\bfX_T^\top\bfX_T)^\dag\bfX_T^\top\bxi\qquad\text{and}\qquad \betazT_{T^c} = \betaz_{T^c}.
$$
In fact, one easily checks that
\begin{equation}\label{y}
\by= \bfX_T\betaz_T+\bfX_{T^c}\betaz_{T^c}+\bxi = \bfX\betazT+(\bfI_n-\Pi_T)\bxi.
\end{equation}
When the rank of $\Pi_T$ is much smaller than the sample size $n$, the noise vectors $\bxi$ and $(\bfI_n-\Pi_T)\bxi$
exhibit similar behavior. Therefore, both $\betaz$ and $\betazT$ may be seen as the signal part of the noisy observation $\by$.
In what follows, we exploit this idea in order to establish oracle inequalities\footnote{We refer the reader to \cite{BTW07} for an
introduction to sparsity oracle inequalities.} on the prediction error
$\ell_n(\betaL,\betazT)$ and some other related quantities. Since $\betazT$ is merely a
perturbation of $\betaz$, all the bounds proved for $\ell_n(\betaL,\betazT)$ carry over
similar bounds on the conventional prediction loss $\ell_n(\betaL,\betaz)$.

\begin{theorem}\label{thm1}
Let $T$ be any subset of $[p]$ and let\footnote{It follows from the Cauchy-Schwarz inequality that $\nu_T$ is not smaller than the
smallest singular value of the matrix $\frac1{\sqrt n}\bfX_T$.} $\nu_T=\inf_{\bu\in\mr^{|T|}}\frac{\sqrt{|T|}\cdot\|\bfX_T\bu\|_2}{\sqrt{n}\|\bu\|_1}$.
For every $\lambda>0$, it holds that
$$
\frac1n\|\Pi_T\bfX(\betaL-\betazT)\|_2^2 \le \frac{\lambda^2|T|}{\nu_T^2}.
$$
\end{theorem}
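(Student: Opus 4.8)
The plan is to combine the exact decomposition \eqref{y} with the first-order optimality conditions of the Lasso, and to close the argument with a self-bounding step governed by $\nu_T$. The whole point of centering at $\betazT$ rather than $\betaz$ is that, after projecting the residual onto $V_T$, the noise will cancel exactly, so no design condition is needed.

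First I would write the subgradient optimality condition for \eqref{lasso}: any minimizer satisfies $\frac1n\bfX^\top(\by-\bfX\betaL)=\lambda\bs$ for some $\bs$ with $\|\bs\|_\infty\le 1$, and restricting to the coordinates in $T$ gives $\bfX_T^\top(\by-\bfX\betaL)=n\lambda\bs_T$ with $\|\bs_T\|_\infty\le 1$. The key step is to apply the orthogonal projector $\Pi_T$ to the residual. Using \eqref{y}, $\by-\bfX\betaL=\bfX(\betazT-\betaL)+(\bfI_n-\Pi_T)\bxi$, and since $\Pi_T(\bfI_n-\Pi_T)=0$ the noise term vanishes entirely, yielding $\Pi_T(\by-\bfX\betaL)=-\Pi_T\bfX(\betaL-\betazT)=:-\bw$. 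Because the columns of $\bfX_T$ lie in $V_T$ and $\Pi_T$ is a symmetric idempotent, one has $\bfX_T^\top\Pi_T=\bfX_T^\top$, so multiplying the projected identity by $\bfX_T^\top$ and comparing with the restricted KKT relation gives the clean equation $\bfX_T^\top\bw=-n\lambda\bs_T$.

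Next, since $\bw=\Pi_T\bfX(\betaL-\betazT)$ lies in $V_T$, I would write $\bw=\bfX_T\bt$ for some $\bt\in\mr^{|T|}$ and compute $\|\bw\|_2^2=\bt^\top\bfX_T^\top\bw=-n\lambda\,\bt^\top\bs_T\le n\lambda\|\bt\|_1$, the last inequality being H\"older with $\|\bs_T\|_\infty\le 1$. The definition of $\nu_T$, applied to this particular $\bt$, gives $\|\bt\|_1\le \frac{\sqrt{|T|}}{\nu_T\sqrt n}\|\bfX_T\bt\|_2=\frac{\sqrt{|T|}}{\nu_T\sqrt n}\|\bw\|_2$. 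Substituting produces the self-bounding inequality $\|\bw\|_2^2\le \frac{\sqrt n\,\lambda\sqrt{|T|}}{\nu_T}\|\bw\|_2$; dividing by $\|\bw\|_2$ (the claim is trivial when $\bw=0$) and squaring yields $\frac1n\|\bw\|_2^2\le \frac{\lambda^2|T|}{\nu_T^2}$, which is the assertion.

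The main obstacle is not a heavy computation but the recognition of the two structural facts that make everything fit together: that the projected residual is \emph{exactly} noiseless (which forces the reference vector to be $\betazT$, not $\betaz$, and is what lets us dispense with restricted-eigenvalue-type hypotheses), and that the resulting relation $\bfX_T^\top\bw=-n\lambda\bs_T$ can be turned into a fixed-point bound of the form $\|\bw\|_2^2\lesssim\|\bw\|_2$ precisely because $\nu_T$ controls the $\ell_1$-to-$\ell_2$ passage needed to bound $\|\bt\|_1$ by $\|\bw\|_2$. A secondary point to handle with care is rank deficiency of $\bfX_T$: the representation $\bw=\bfX_T\bt$ need not be unique, but the $\nu_T$ inequality holds for every such $\bt$, so the argument is unaffected, and the case $\nu_T=0$ is covered since the right-hand side is then $+\infty$.
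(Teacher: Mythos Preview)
Your proof is correct and follows essentially the same route as the paper's: the KKT conditions restricted to $T$, the noise cancellation through $\Pi_T(\bfI_n-\Pi_T)=0$ that makes centering at $\betazT$ work, and the self-bounding step via $\nu_T$. The only cosmetic difference is that the paper writes out an explicit comparison vector $\barbbeta$ (with $\barbbeta_T=\betazT_T+(\bfX_T^\top\bfX_T)^\dag\bfX_T^\top\bfX_{T^c}(\betazT-\betae)_{T^c}$) whose $T$-block furnishes a concrete representative of your abstract $\bt$, and then uses the subgradient inequality in inner-product form rather than the equation $\bfX_T^\top\bw=-n\lambda\bs_T$ together with H\"older; the two formulations carry identical content.
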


A remarkable fact is that the claim of the foregoing theorem is valid under very weak assumptions on the design matrix $\bfX$,
for every value of the tuning parameter $\lambda>0$ and whatever the noise vector $\bxi$ is.
An immediate consequence of this result that follows from the triangle inequality is
\begin{equation}\label{eq:2.2}
\frac1{\sqrt{n}}\|\Pi_T\bfX(\betaL-\betaz)\|_2 \le \bigg(\frac{\lambda \sqrt{|T|}}{{\nu_T}} +\frac{\|\Pi_T\bxi\|_2}{\sqrt{n}}\bigg),\qquad \forall
T\subset [p].
\end{equation}
Note that the vector $\Pi_T\bxi/\sqrt{n}$ appearing in the last term in this inequality is exactly equal to the
stochastic error of the least squares estimator when only the covariates $\{\bx^j:j\in T\}$ are considered as relevant.
The Euclidean norm of this vector is typically of the order of $\sigma^*\sqrt{|T|/n}$ and represents a lower bound
on the risk when no information other than $|T|$-sparsity of $\betaz$ is available.
Since it is usually recommended to choose $\lambda$ not larger than $\sigma^*\sqrt{2\log(p/\delta)/n}$, for some prescribed tolerance level
$\delta\in(0,1)$, we conclude that  $\Pi_T\bfX\betaL$ estimates the vector $\Pi_T\bfX\betaz$ with the fast
rate of convergence ${\sigma^*}^2{|T|\log(p)/n}$.


Relation (\ref{eq:2.2}) also demonstrates that the prediction loss of the Lasso decreases to zero at the fast rate of convergence  $s\log(p)/n$
in some particular cases with strongly correlated covariates. This result is stated in the following proposition.

\begin{proposition}\label{prop1}
If there is a subset $T$ of $[p]$ of cardinality $s$ such that all the covariates $\{\bx^j:j\in T^c\}$ belong to the linear span of
$\{\bx^j:j\in T\}$, then for every $\lambda>0$
\begin{equation}\label{eq:2.3}
\ell_n(\betaL,\betaz)^{1/2} \le \frac{\lambda \sqrt{s}}{{\nu_T}} +\frac{\|\Pi_T\bxi\|_2}{\sqrt{n}}.
\end{equation}
In particular, for every vector $\bxi$ with uncorrelated entries such that $\bfE[\bxi]=0$ and $\max_i\bfE[\xi_i^2]\le \sigma^*{}^2$,
\begin{equation}\label{eq:2.4}
\bfE[\ell_n(\betaL,\betaz)] \le \frac{2\lambda^2 s}{\nu_T^2} +\frac{2\sigma^*{}^2s}{n}.
\end{equation}
If, in addition, $\bxi\sim \sigma^*\mathcal N_n(0,\bfI_n)$, then with probability at least $1-\delta$
\begin{equation}\label{eq:2.5}
\ell_n(\betaL,\betaz)\le \frac{2\lambda^2 s}{\nu_T^2} +\frac{4\sigma^*{}^2 (s+2\log(1/\delta))}{n}.
\end{equation}
\end{proposition}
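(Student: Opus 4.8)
The plan is to read off all three displays from inequality~\eqref{eq:2.2}, whose right-hand side already exhibits the two terms appearing in the claims; the collinearity hypothesis is used only to identify the left-hand side with the full prediction loss.

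First I would exploit the hypothesis. Saying that every covariate $\bx^j$ with $j\in T^c$ lies in the linear span of $\{\bx^j:j\in T\}$ means precisely that every column of $\bfX$ belongs to $V_T$; since the columns of $\bfX_T$ already span $V_T$, the entire column space of $\bfX$ coincides with $V_T$. Consequently $\Pi_T\bfX=\bfX$, hence $\Pi_T\bfX(\betaL-\betaz)=\bfX(\betaL-\betaz)$. Dividing by $\sqrt n$ turns the left-hand side of~\eqref{eq:2.2} into $\ell_n(\betaL,\betaz)^{1/2}$, and taking $|T|=s$ gives exactly~\eqref{eq:2.3}. This is the only place the hypothesis enters; everything afterwards is deterministic manipulation plus noise control.

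For~\eqref{eq:2.4} I would square~\eqref{eq:2.3} and apply $(a+b)^2\le 2a^2+2b^2$ to obtain
$$
\ell_n(\betaL,\betaz)\le \frac{2\lambda^2 s}{\nu_T^2}+\frac{2\|\Pi_T\bxi\|_2^2}{n}.
$$
Taking expectations reduces matters to bounding $\bfE[\|\Pi_T\bxi\|_2^2]=\tr\bigl(\Pi_T\,\bfE[\bxi\bxi^\top]\bigr)$. Using that the noise coordinates are uncorrelated, only the diagonal of $\bfE[\bxi\bxi^\top]$ survives the trace, so $\bfE[\|\Pi_T\bxi\|_2^2]=\sum_i(\Pi_T)_{ii}\,\bfE[\xi_i^2]\le \sigma^*{}^2\,\tr(\Pi_T)$ because $0\le(\Pi_T)_{ii}\le 1$. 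Finally $\tr(\Pi_T)=\rk(\bfX_T)\le|T|=s$, which gives $\bfE[\|\Pi_T\bxi\|_2^2]\le s\,\sigma^*{}^2$ and hence~\eqref{eq:2.4}. For the high-probability bound~\eqref{eq:2.5} I keep the same squared inequality and replace the expectation bound by a tail bound: under $\bxi\sim\sigma^*\mathcal N_n(0,\bfI_n)$ the vector $\bxi/\sigma^*$ is standard Gaussian, so $\|\Pi_T\bxi\|_2^2/\sigma^*{}^2$ is a chi-square variable with $r_T=\rk(\Pi_T)\le s$ degrees of freedom. The Laurent--Massart deviation inequality gives $\mpr\bigl(\|\Pi_T\bxi\|_2^2/\sigma^*{}^2\ge r_T+2\sqrt{r_T t}+2t\bigr)\le e^{-t}$; choosing $t=\log(1/\delta)$ and simplifying via $2\sqrt{r_T t}\le r_T+t$ and $r_T\le s$ bounds $\|\Pi_T\bxi\|_2^2$ by $2\sigma^*{}^2(s+2\log(1/\delta))$ on an event of probability at least $1-\delta$, and substituting into the squared inequality yields~\eqref{eq:2.5}.

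The argument is essentially routine once~\eqref{eq:2.2} and the identification $\Pi_T\bfX=\bfX$ are in place; the only steps requiring genuine care are, in~\eqref{eq:2.4}, noting that the off-diagonal noise covariances must drop out (so that the marginal variance bound suffices), and, in~\eqref{eq:2.5}, selecting the correct $\chi^2$ deviation inequality together with the elementary estimates $2\sqrt{r_T t}\le r_T+t$ and $r_T\le s$ so that the numerical constant comes out exactly as $4\sigma^*{}^2(s+2\log(1/\delta))/n$.
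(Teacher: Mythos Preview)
Your argument is correct and matches the paper's own proof, which likewise deduces \eqref{eq:2.3} from \eqref{eq:2.2} via $\Pi_T\bfX=\bfX$ and then obtains \eqref{eq:2.4}--\eqref{eq:2.5} by squaring and controlling $\|\Pi_T\bxi\|_2^2$ in expectation and via chi-squared tails. Your observation that uncorrelated noise coordinates are needed for the expectation bound in \eqref{eq:2.4} is apt: the proposition's stated hypothesis omits this, but it is implicit in the paper's main model (and without it the bound $\bfE\|\Pi_T\bxi\|_2^2\le s\,\sigma^*{}^2$ can fail).
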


The first two claims of this proposition trivially follow from (\ref{eq:2.2}), while the third claim follows from (\ref{eq:2.3})
using the fact that $\|\Pi_T\bxi\|_2^2$ is drawn from the chi-squared distribution $\chi^2_s$ in conjunction with the well-known
results on the tails of the latter.

This proposition answers to the first question raised in the introduction concerning the performance of the Lasso as a function of the rank of $\bfX$
when the latter is small as compared to $n$. In fact, let us denote by $\bar\nu_r$ the maximal value of $\nu_T$ over all possible subsets of
$[p]$ of cardinality $r=\rk(\bfX)$: $\bar\nu_r = \max_{T:|T|=r} \nu_T$.
It follows from (\ref{eq:2.5}) that when $\bxi$ is Gaussian and $\lambda = \sqrt{2\log(p)/n}$, for every $\delta\in(0,1)$, with probability $1-\delta$,
\begin{equation}\label{eq:2.51}
\ell_n(\betaL,\betaz) \le \frac{4\log(p) \rk(\bfX)}{n\bar\nu_r^2} +\frac{4\sigma^*{}^2 (\rk(\bfX)+2\log(1/\delta))}{n}.
\end{equation}


\newcommand{\sectionlimits}{\section{Limits of fast rates: an example}\label{limits}
In this section, we show that the prediction loss of the Lasso is in some cases at best of the order of $n^{-1/2}$, whatever the tuning parameter is. This example provides - to the best of our knowledge - the first proof that in some cases, the Lasso can not achieve fast rates even if the regression vector $\beta^*$ has fixed length.

\begin{example}\label{example1}
Let $n\ge 2$ be an integer. We set $m$ to be the largest integer less than $\sqrt{2n}$ and define the design matrix $\bfX\in\mr^{n\times 2m}$ by
$$
\bfX = \sqrt{\frac{n}{2}}
\begin{pmatrix}
\mathbf 1_m^\top & \mathbf 1_m^\top\\
\bfI_m & -\bfI_m\\
\mathbf 0_{(n-m-1)\times m} &\mathbf 0_{(n-m-1)\times m}
\end{pmatrix}.
$$
If we denote by $\{\be_j:j\in[n]\}$ the canonical basis of $\mr^n$,
the columns of this matrix are of the form $\bx^j = \sqrt{n/2}\,(\be_1+\be_{j+1})$ for $j=1,\ldots,m$ and
$\bx^j = \sqrt{n/2}\,(\be_1-\be_{j-m+1})$ for $j=m+1,\ldots,2m$. To avoid unnecessary technicalities, we assume in this example
that the noise vector is composed of i.i.d.\ Rademacher random variables, that is $\bfP(\bxi = \bfs) = 2^{-n}$ for every
$\bfs\in\{\pm 1\}^n$ (thus $\sigma^*=1$). Let the true regression vector be $\betaz\in\mr^{2m}$ such that $\beta^*_1=\beta^*_{m+1}=1$ and
$\beta^*_j=0$ for every $j\in[2m]\setminus\{1,m+1\}$.

\begin{proposition} \label{prop:2}
For any $\lambda>0$, the prediction loss of the Lasso $\betaL$ satisfies the inequality
$$
\bfP\Big(\ell_n(\betaL,\betaz) \ge \frac1{2\sqrt{2n}}\Big)\ge \frac12.
$$
\end{proposition}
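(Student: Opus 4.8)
The plan is to exploit the very special structure of this design to reduce the Lasso to an essentially scalar optimisation problem, and then to expose a trade-off that no choice of $\lambda$ can escape.

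First I would record the geometry. Since $\bx^1=\sqrt{n/2}\,(\be_1+\be_2)$ and $\bx^{m+1}=\sqrt{n/2}\,(\be_1-\be_2)$, the signal is $\bfX\betaz=\bx^1+\bx^{m+1}=\sqrt{2n}\,\be_1$, and all columns lie in $V:=\mathrm{span}(\be_1,\dots,\be_{m+1})$. Hence every Lasso fit can be written $\bfX\betaL=\hat U\be_1+\sum_{k=1}^m\hat v_k\be_{k+1}$, and
\[
\ell_n(\betaL,\betaz)=\frac1n\Big[(\hat U-\sqrt{2n})^2+\sum_{k=1}^m\hat v_k^2\Big].
\]
It thus suffices to bound the bracket below by a constant multiple of $n/\sqrt{2n}$.

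Second, I would reparametrise. For $\bbeta=(a_1,\dots,a_m,b_1,\dots,b_m)$ the rotation identity $|a|+|b|=\max(|a+b|,|a-b|)$ gives $\|\bbeta\|_1=\sum_k\max(|a_k+b_k|,|a_k-b_k|)$, whereas $\bfX\bbeta$ depends on $\bbeta$ only through $U=\sqrt{n/2}\sum_k(a_k+b_k)$ and $v_k=\sqrt{n/2}(a_k-b_k)$. Minimising $\|\bbeta\|_1$ over all representations of a prescribed fit $(U,v)$ yields the clean formula $\min\|\bbeta\|_1=\sqrt{2/n}\,\max(|U|,\sum_k|v_k|)$. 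Consequently, up to an additive constant independent of $\bbeta$, solving the Lasso amounts to minimising over $(U,v)\in\mr\times\mr^m$ the function $(y_1-U)^2+\sum_k(y_{k+1}-v_k)^2+\gamma\max(|U|,\sum_k|v_k|)$ with $\gamma=2\lambda\sqrt{2n}$, where $y_1=\sqrt{2n}+\xi_1$ and $y_{k+1}=\xi_{k+1}\in\{\pm1\}$.

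Third, I would extract the trade-off. Introducing the common budget $t:=\max(|\hat U|,\sum_k|\hat v_k|)$ and optimising the two data blocks separately, the noise block is an $\ell_1$-ball projection of the vector $(\xi_2,\dots,\xi_{m+1})$, so $\sum_k\hat v_k^2$ equals $m$ when $t\ge m$ (the noise is fitted at no extra cost) and equals $t^2/m$ when $t\le m$, while $\hat U=\min(y_1,t)$. The key phenomenon is that a single budget $t$ couples the signal coordinate to the noise coordinates: making $\hat U$ close to $\sqrt{2n}$ forces $t$ to be large, which in turn forces the pure-noise coordinates to be fitted. In the binding regime $|\hat U|=\sum_k|\hat v_k|=t\le m$ the bracket equals $(t-\sqrt{2n})^2+t^2/m$, whose minimum over $t$ is $2n/(m+1)$; in the complementary regime one already has $\sum_k\hat v_k^2=m$. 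Using $m<\sqrt{2n}$ this gives, in either case,
\[
\ell_n(\betaL,\betaz)\ \ge\ \min\Big\{\frac{2}{m+1},\,\frac{m}{n}\Big\}\ \ge\ \frac{1}{2\sqrt{2n}}.
\]

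The step I expect to be the main obstacle is handling the nonsmooth penalty $\max(|U|,\sum_k|v_k|)$ at its kink $|U|=\sum_k|v_k|$, where the subgradient distributes a single multiplier between the signal and the noise blocks; this is precisely the configuration that realises the extremal value $2n/(m+1)$, so it cannot be waved away. Restricting to the event $\{\xi_1=+1\}$, of probability $\tfrac12$, keeps $y_1=\sqrt{2n}+1>m$ and thereby eliminates the degenerate sub-case $y_1\le m$, leaving the scalar minimisation of the previous paragraph transparent; this is the natural origin of the probability $\tfrac12$ in the statement. The residual checks—that the scalar minimum is $2n/(m+1)$ and that both $2/(m+1)$ and $m/n$ dominate $1/(2\sqrt{2n})$ for every $n\ge2$—are elementary.
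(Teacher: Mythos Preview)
Your argument is correct and takes a genuinely different route from the paper. The paper works directly with the KKT conditions, splits the range of $\lambda$ into three intervals, exhibits in each interval an explicit Lasso solution on the event $\{\xi_1<0\}$, and then computes the prediction loss for that solution. Your approach instead exploits the identity $|a|+|b|=\max(|a+b|,|a-b|)$ to rewrite the Lasso as a minimisation in the fit variables $(U,v)$ with penalty $\max(|U|,\|v\|_1)$, and then observes that the Lasso solution is necessarily the constrained minimiser for its own budget $\hat t=\max(|\hat U|,\|\hat v\|_1)$; the case split is over $\hat t$ rather than over $\lambda$, and the lower bound $\min\{m/n,\,2/(m+1)\}\ge 1/(2\sqrt{2n})$ drops out uniformly. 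What your approach buys is a transparent explanation of the mechanism---a single budget couples the signal coordinate to $m\asymp\sqrt{n}$ pure-noise coordinates, so either the signal is underfit or the noise is overfit---and it avoids the somewhat tedious verification that the exhibited candidates really satisfy the KKT system in each $\lambda$-range. What the paper's approach buys is the explicit form of the Lasso solution in each regime, which is informative in its own right and shows in particular that the bound $1/(2\sqrt{2n})$ is essentially sharp (the loss equals $(2+2m\lambda^2)/(m+1)$ in the middle regime). Note also that you condition on $\{\xi_1=+1\}$ whereas the paper conditions on $\{\xi_1<0\}$; your choice is the natural one for your argument since it guarantees $y_1=\sqrt{2n}+1>m$ and hence $\hat U=\hat t$ whenever $\hat t\le m$.
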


There are at least three reasons that make this example particularly instructive.
First, it shows that the correlations between the covariates
need not to be close to $\pm 1$ to cause the failure of the fast rates.
Even in the case of small fixed correlations the rate of convergence
of the Lasso in prediction loss may be not smaller than $Cn^{-1/2}$. Second, the foregoing result is true for every $\lambda>0$. Thus, even an
oracle choice of $\lambda$ cannot prevent slow rates. Third, it is valid for a small value of sparsity index: the $\ell_0$-norm of $\betaz$
is equal to 2. In the literature, other examples on which the Lasso fails to achieve fast rates have been proposed (see Section 2 in \cite{CandesPlan09}),
however, to the best of our knowledge, this is the first counter-example in which such a result is analytically proved for fixed sparsity, fixed
correlations, any value of $\lambda$ and a $\betaz$ independent of $n$.
\end{example}

This example clearly demonstrates the limits of the Lasso as a method of prediction. While for several other prediction
procedures~\cite{DT07,RigTsy11,DT12a,DT12b} fast rates are valid without any condition on the correlations between the predictors, some
relatively strong assumptions are necessary for the Lasso to achieve fast rates. It should be noted in defense of the Lasso that it
presents major advantages in terms of computational complexity.}

\section{Fast rates under relaxed compatibility assumptions on the design matrix}\label{sec:fast}


To the best of our knowledge, the sharpest oracle inequality for the  Lasso available in the
literature is the one presented in \cite{SunZhang}. We begin by stating their result\footnote{The results
stated below do not match exactly with those stated in \cite{SunZhang}, but they can be easily deduced from
the proofs in \cite{SunZhang}} in order to discuss what can be learnt from it concerning the questions presented
in the introduction. Then, we state a new oracle inequality that combines the proof of \cite{SunZhang} and
the idea of estimating $\betazT$ instead of $\betaz$ in order to get some improvements.

For every set $T\subset[p]$
and any $\bar c>0$ we recall the definition of the compatibility factor $\kappa_{T,\bar c}\ge 0$:
\begin{equation}\label{kappa1}
\kappa_{T,\bar c} = \inf_{\bdelta\in\mr^p : \|\bdelta_{T^c}\|_1 < \bar c \|\bdelta_T\|_1} \frac{|T|\cdot\|\bfX\bdelta\|_2^2}
{n(\|\bdelta_T\|_1-\bar c^{-1} \|\bdelta_{T^c}\|_1)^2}.
\end{equation}
\begin{theorem}[\cite{SunZhang}, Theorem 4]\label{thmSZ}
Let $\delta\in(0,1)$ be a fixed tolerance level.  If for some $\gamma>1$, the tuning parameter of the Lasso
satisfies $\lambda=\gamma \sigma^*\big(\frac2n\log(p/\delta)\big)^{1/2}$, then  with probability at least $1-\delta$,
\begin{align*}
\ell_n(\betaL\!\!,\betaz)
&\le \inf_{\barbbeta\in\mr^p,T\subset [p]} \bigg\{ \ell_n(\barbbeta,\betaz) + 4\lambda \|\barbbeta_{T^c}\|_1 +\frac{2(1+\gamma)^2\sigma^*{}^2 |T|\log(p/\delta)}{n\kappa_{T, (\gamma+1)/(\gamma-1)}} \bigg\}.
\end{align*}
\end{theorem}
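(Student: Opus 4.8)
The plan is to follow the now-standard route for sharp Lasso oracle inequalities (as in \cite{BRT,vandeGeer07} and, in the present refined form, \cite{SunZhang}): reduce everything to a deterministic ``basic inequality'' on a high-probability event that controls the noise, split the resulting $\ell_1$-terms relative to the reference set $T$, and invoke the compatibility factor $\kappa_{T,\bar c}$. The only genuine departure from the textbook well-specified argument is that the reference vector here is an \emph{arbitrary} $\barbbeta\in\mr^p$ (not $\betaz$) and $T$ is arbitrary; this is exactly what produces the approximation term $4\lambda\|\barbbeta_{T^c}\|_1$ in the bound.

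First I would fix the noise event. Writing $\by=\bfX\betaz+\bxi$, each coordinate $\tfrac1n\langle\bx^j,\bxi\rangle$ is a centered Gaussian with variance at most $\sigma^*{}^2/n$ (using $\|\bx^j\|_2^2\le n$), so a Gaussian tail bound together with a union bound over $j\in[p]$ gives that, with probability at least $1-\delta$, the event
\[
\Omega=\Big\{\tfrac1n\|\bfX^\top\bxi\|_\infty\le \lambda/\gamma\Big\}
\]
holds, precisely because $\lambda/\gamma=\sigma^*(2\log(p/\delta)/n)^{1/2}$. Working on $\Omega$, I would use the optimality of $\betaL$ in \eqref{lasso} against an arbitrary $\barbbeta$; substituting $\by=\bfX\betaz+\bxi$ and cancelling $\tfrac1{2n}\|\bxi\|_2^2$, the cross term vanishes exactly and one is left with the basic inequality
\[
\tfrac12\ell_n(\betaL,\betaz)\le \tfrac12\ell_n(\barbbeta,\betaz)+\tfrac1n\langle\bfX^\top\bxi,\bdelta\rangle+\lambda\big(\|\barbbeta\|_1-\|\betaL\|_1\big),\qquad \bdelta:=\betaL-\barbbeta,
\]
on which $\Omega$ bounds the stochastic term by $(\lambda/\gamma)\|\bdelta\|_1$.

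Next I would do the $\ell_1$-bookkeeping relative to $T$. Splitting $\|\barbbeta\|_1-\|\betaL\|_1$ coordinatewise and applying the triangle inequality gives
\[
\tfrac1\gamma\|\bdelta\|_1+\|\barbbeta\|_1-\|\betaL\|_1\le \big(1+\tfrac1\gamma\big)\|\bdelta_T\|_1-\big(1-\tfrac1\gamma\big)\|\bdelta_{T^c}\|_1+2\|\barbbeta_{T^c}\|_1,
\]
where I would recognize the ratio $(1+\tfrac1\gamma)/(1-\tfrac1\gamma)=(\gamma+1)/(\gamma-1)=\bar c$. This produces a dichotomy. If $\|\bdelta_{T^c}\|_1>\bar c\,\|\bdelta_T\|_1$, the bracketed combination is negative, so $\ell_n(\betaL,\betaz)\le \ell_n(\barbbeta,\betaz)+4\lambda\|\barbbeta_{T^c}\|_1$ and the claimed bound already holds (its variance term being nonnegative). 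Otherwise $\bdelta$ lies in the cone $\|\bdelta_{T^c}\|_1\le\bar c\,\|\bdelta_T\|_1$, and the definition \eqref{kappa1} of $\kappa_{T,\bar c}$ yields
\[
\|\bdelta_T\|_1-\bar c^{-1}\|\bdelta_{T^c}\|_1\le \Big(\frac{|T|\,\|\bfX\bdelta\|_2^2}{n\,\kappa_{T,\bar c}}\Big)^{1/2}=\Big(\frac{|T|}{\kappa_{T,\bar c}}\Big)^{1/2}\ell_n(\betaL,\barbbeta)^{1/2}.
\]

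Substituting this back leaves
\[
\tfrac12\ell_n(\betaL,\betaz)\le \tfrac12\ell_n(\barbbeta,\betaz)+2\lambda\|\barbbeta_{T^c}\|_1+\lambda\big(1+\tfrac1\gamma\big)\Big(\frac{|T|}{\kappa_{T,\bar c}}\Big)^{1/2}\ell_n(\betaL,\barbbeta)^{1/2},
\]
and the last step is to decouple the trailing term by an inequality of the type $2xy\le \eta x^2+\eta^{-1}y^2$, with $\eta$ tuned so that the quadratic-in-loss part is reabsorbed and the residual equals $\lambda^2(1+\tfrac1\gamma)^2|T|/\kappa_{T,\bar c}$; from $\lambda=\gamma\sigma^*(2\log(p/\delta)/n)^{1/2}$ one checks that this is exactly $2(1+\gamma)^2\sigma^*{}^2|T|\log(p/\delta)/(n\kappa_{T,\bar c})$, the stated variance term. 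I expect this reconciliation to be the main obstacle: the compatibility factor controls the loss $\ell_n(\betaL,\barbbeta)$ against the \emph{reference} $\barbbeta$, whereas the target is the loss $\ell_n(\betaL,\betaz)$ against $\betaz$, so the decoupling must be organized (treating $\ell_n(\betaL,\betaz)^{1/2}$ as the unknown and relating the two losses) so as not to degrade the constants $1$, $4$ and $2(1+\gamma)^2$. This careful bookkeeping is precisely what \cite{SunZhang} carry out, and the stated inequality is deduced from it.
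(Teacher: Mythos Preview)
Your overall architecture (noise event, $\ell_1$-bookkeeping relative to $T$, dichotomy, compatibility) is correct, but the basic inequality you derive is too weak to yield the leading constant $1$, and this is precisely the point you flag as ``the main obstacle'' without resolving. Comparing $F(\betaL)\le F(\barbbeta)$ in \eqref{lasso} gives only
\[
\tfrac12\ell_n(\betaL,\betaz)\le \tfrac12\ell_n(\barbbeta,\betaz)+\tfrac1n\langle\bfX^\top\bxi,\bdelta\rangle+\lambda(\|\barbbeta\|_1-\|\betaL\|_1),\qquad \bdelta=\betaL-\barbbeta,
\]
so after the compatibility step the right-hand side involves $\ell_n(\betaL,\barbbeta)^{1/2}$ while the left-hand side contains only $\ell_n(\betaL,\betaz)$. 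Any decoupling via $2xy\le \eta x^2+\eta^{-1}y^2$ then forces you to relate $\ell_n(\betaL,\barbbeta)$ to $\ell_n(\betaL,\betaz)$ through the triangle inequality, which unavoidably inflates the leading constant (one gets a factor $\ge 2$ in front of $\ell_n(\barbbeta,\betaz)$, not $1$).

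The device that Sun--Zhang use, and that the present paper reproduces in the proof of Theorem~\ref{thm3} (see \eqref{1.1}--\eqref{1.2}), is to start from the KKT condition \eqref{eq:2.1} rather than from the plain optimality comparison, and to apply the polarization identity $\langle\bfX\bdelta,\bfX(\betaL-\betaz)\rangle=\tfrac12(\|\bfX\bdelta\|_2^2+\|\bfX(\betaL-\betaz)\|_2^2-\|\bfX(\barbbeta-\betaz)\|_2^2)$. Equivalently, this exploits the strong convexity of the quadratic part: the KKT inequality is your basic inequality \emph{plus} an extra $\tfrac12\ell_n(\betaL,\barbbeta)$ on the left-hand side. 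That extra term is exactly what is needed: after the compatibility bound one applies $2ab\le a^2+b^2$ with $a=\ell_n(\betaL,\barbbeta)^{1/2}$, the $\ell_n(\betaL,\barbbeta)$ pieces cancel identically, and the constants $1$, $4$ and $2(1+\gamma)^2$ fall out without further manipulation. Your proof becomes correct once you replace the optimality comparison by the KKT-based inequality (or, equivalently, add the strong-convexity remainder $\tfrac12\ell_n(\betaL,\barbbeta)$ to your basic inequality).
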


An important feature of this inequality is its sharpness, reflected by the fact that the constant
in front of the infimum, often referred to as the leading constant of an oracle inequality (OI), is equal to one. The first sharp OI
with fast rate of convergence of the remainder term has been proved in \cite{KLT}. It was then refined and extended to the procedure
square-root Lasso (also known as the scaled Lasso) in \cite{SunZhang}.

Let us state now some refinements of Theorem~\ref{thmSZ}.
For any subset $T$ of $[p]$, let us introduce the weights\footnote{In the definition of $\bar\bomega$, we use the convention $0/0=0$.}
\begin{align}
\omega_j(T,\bfX) &= \frac1{\sqrt{n}}\;{\|(\bfI_n-\Pi_T)\bx^j\|_2},\qquad
\bar\omega_j(T,\bfX) = \frac{\omega_j(T,\bfX)}{\max_{\ell\in [p]} \omega_\ell(T,\bfX)},\qquad\forall j\in[p].\label{weights2}
\end{align}
Since $\bx^j$ are normalized to have an $\ell_2$ norm at most equal to $\sqrt{n}$, the weights $\omega_j(T,\bfX)$ are all between
zero and one. Furthermore, they vanish whenever $\bx^j$ belongs to the linear span of $\{\bx^\ell,\ell\in T\}$. In particular,
$\omega_j(T,\bfX)=0$ for every $j\in T$.  Using these weights and any $\gamma>0$, we define the sets
\begin{align*}
\mathcal C_0(T,\gamma,\bomega) &=\Big\{\bdelta\in\mr^p: \|(\mathbf 1_p-\gamma^{-1}\bomega)_{T^c}\odot\bdelta_{T^c}\|_1 <
\|\bdelta_{T}\|_1\Big\}.
\end{align*}
When $\bomega=\mathbf 1_p$, we write $\mathcal C_0(T,\gamma)$ instead of $\mathcal C_0(T,\gamma,\bomega)$.
\begin{definition}[Compatibility factors]
For every vector $\bomega\in\mr^p$ with nonnegative entries, we call the weighted compatibility factor the quantity
$$
\bar\kappa_{T,\gamma,\bomega} = \inf_{\bdelta \in \mathcal C_0(T,\gamma,\bomega)}
\frac{|T|\cdot\|\bfX\bdelta\|_2^2}{n\big\{\|\bdelta_{T}\|_1-\|(\mathbf 1_p-\gamma^{-1}\bomega)_{T^c}\odot\bdelta_{T^c}\|_1\big\}^2}.
$$
\end{definition}

The weighted compatibility factors with weights $\bomega$ and $\bar\bomega$ defined in (\ref{weights2}) are particularly useful for explaining the accuracy of the Lasso as measured by the prediction loss. They relax the assumptions
previously known in the literature that lead to fast rates.

\begin{theorem}\label{thm3}
Let $\delta\in(0,1)$ be a fixed tolerance level. If for some value $\gamma>1$, the tuning parameter of the Lasso
satisfies $\lambda=\gamma \sigma^*\sqrt{2\log(p/\delta)/n}$, then  on an event of probability at least $1-2\delta$,
the following bound holds:
\begin{align}
\ell_n(\betaL\!\!,\betaz)
&\le \inf_{\barbbeta\in\mr^p,T\subset [p]} \Big\{ \ell_n(\barbbeta,\betaz) + 4\lambda \|\barbbeta_{T^c}\|_1 +
\frac{4\sigma^*{}^2|T|\log(p/\delta)}{n}\cdot r_{n,p,T}\Big\},\label{eq:3.1}
\end{align}
where the remainder term is given by $r_{n,p,T}=\log^{-1}(p/\delta)+
2|T|^{-1}+\gamma^2\bar\kappa_{T, \gamma, \bomega}^{-1}$.
Furthermore, if for some $T\subset [p]$ and some $\gamma>1$, $\lambda=\gamma \sigma^*\rho_T
\sqrt{2\log(p/\delta)/n}$, then  with probability at least $1-2\delta$, the following bound holds
\begin{align}
\ell_n(\betaL\!\!,\betaz)
&\le \inf_{\barbbeta} \!\Big\{ \ell_n(\barbbeta,\betaz) + 4\lambda \|\barbbeta_{T^c}\|_1\!\Big\} +
\frac{4\sigma^*{}^2\rho_T ^2|T|\log(p/\delta)}{n}\cdot \bar r_{n,p,T},\label{eq:3.3}
\end{align}
where the remainder term is given by $\bar r_{n,p,T}=
\frac{(1+2|T|^{-1}\log(1/\delta))}{\rho_T ^2\log(p/\delta)}+\frac{\gamma^2}{\bar\kappa_{T, \gamma, \bar\bomega}}$.
\end{theorem}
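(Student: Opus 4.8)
The plan is to prove the inequality for a fixed pair $(\barbbeta,T)$ on an event of probability at least $1-2\delta$ and then to read off the two displays by inserting the respective values of $\lambda$; the infimum over $\barbbeta$ is automatic, since the event will not depend on $\barbbeta$. I work throughout with the surrogate target $\betazT$ and the reduced noise $\boldg:=(\bfI_n-\Pi_T)\bxi$, using $\by=\bfX\betazT+\boldg$ from \eqref{y}: because $\boldg\perp V_T$, the directions $j\in T$ carry no noise, and this is exactly what generates the weights $\bomega$ and the cone $\mathcal C_0(T,\gamma,\bomega)$. Writing the optimality condition for \eqref{lasso} as $\frac1n\bfX^\top(\bfX\betaL-\by)+\lambda\bs=0$ with $\bs\in\partial\|\betaL\|_1$, pairing it with $\bdelta:=\betaL-\barbbeta$, substituting $\bfX\betaL-\by=\bfX(\betaL-\betazT)-\boldg$, and applying the polarisation identity to the three losses, I obtain the sharp identity
\[\ell_n(\betaL,\betazT)+\ell_n(\betaL,\barbbeta)=\ell_n(\barbbeta,\betazT)+\tfrac2n\boldg^\top\bfX\bdelta-2\lambda\,\bs^\top\bdelta.\]
Keeping $\ell_n(\betaL,\barbbeta)$ on the left with a unit coefficient is what later absorbs the compatibility term without inflating the leading constant.

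On the event $\Omega_1=\{\frac1n|\boldg^\top\bx^j|\le\sigma^*\omega_j\sqrt{2\log(p/\delta)/n}\ \forall j\}$ (of probability at least $1-\delta$ by a union bound over the $p$ Gaussian tails, and on which the $j\in T$ terms vanish since $\omega_j=0$), the stochastic term is at most $\frac{2\lambda}{\gamma}\sum_{j\in T^c}\omega_j|\delta_j|$, because $\sigma^*\sqrt{2\log(p/\delta)/n}=\lambda/\gamma$. Together with the elementary bound $-\bs^\top\bdelta\le\|\bdelta_T\|_1-\|\bdelta_{T^c}\|_1+2\|\barbbeta_{T^c}\|_1$, and using $\gamma>1$ so that $1-\gamma^{-1}\omega_j>0$, the $T^c$ contributions collapse to $-2\lambda\|(\mathbf 1_p-\gamma^{-1}\bomega)_{T^c}\odot\bdelta_{T^c}\|_1$, giving
\[\ell_n(\betaL,\betazT)+\ell_n(\betaL,\barbbeta)\le\ell_n(\barbbeta,\betazT)+4\lambda\|\barbbeta_{T^c}\|_1+2\lambda A,\]
with $A:=\|\bdelta_T\|_1-\|(\mathbf 1_p-\gamma^{-1}\bomega)_{T^c}\odot\bdelta_{T^c}\|_1$. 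If $A\le0$ we are done; otherwise $\bdelta\in\mathcal C_0(T,\gamma,\bomega)$ and the definition of $\bar\kappa_{T,\gamma,\bomega}$ gives $A\le\{|T|\,\ell_n(\betaL,\barbbeta)/\bar\kappa_{T,\gamma,\bomega}\}^{1/2}$, so in all cases $2\lambda A\le 2\lambda\{|T|/\bar\kappa_{T,\gamma,\bomega}\}^{1/2}\,\ell_n(\betaL,\barbbeta)^{1/2}$.

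The conversion from $\betazT$ to $\betaz$ is the crux. Since $\bfX(\betazT-\betaz)=\Pi_T\bxi$, subtracting the two expansions of $\ell_n(\cdot,\betaz)$ about $\betazT$ yields the exact identity $\ell_n(\betaL,\betaz)=\ell_n(\barbbeta,\betaz)+\{\ell_n(\betaL,\betazT)-\ell_n(\barbbeta,\betazT)\}+\frac2n\langle\bfX\bdelta,\Pi_T\bxi\rangle$; doing the passage by this differencing, rather than by a lossy $(1+\varepsilon)$-inequality, is precisely what preserves the unit leading constant on $\ell_n(\barbbeta,\betaz)$. Feeding in the previous bound and writing $u:=\ell_n(\betaL,\barbbeta)^{1/2}$ and $E:=\frac1n\|\Pi_T\bxi\|_2^2$, the whole $u$-dependence becomes $-u^2+2u\{\lambda(|T|/\bar\kappa_{T,\gamma,\bomega})^{1/2}+\sqrt E\}$ (the $-u^2$ from the retained $\ell_n(\betaL,\barbbeta)$, and $\frac2n\langle\bfX\bdelta,\Pi_T\bxi\rangle\le 2u\sqrt E$ by Cauchy--Schwarz), whose maximum over $u\ge0$ is $(\lambda(|T|/\bar\kappa_{T,\gamma,\bomega})^{1/2}+\sqrt E)^2\le 2\lambda^2|T|/\bar\kappa_{T,\gamma,\bomega}+2E$. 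Bounding $E$ on the event $\Omega_2=\{\|\Pi_T\bxi\|_2^2\le\sigma^{*2}(|T|+2(|T|\log(1/\delta))^{1/2}+2\log(1/\delta))\}$, a $\chi^2$-tail with at most $|T|$ degrees of freedom of probability at least $1-\delta$, I reach, on $\Omega_1\cap\Omega_2$,
\[\ell_n(\betaL,\betaz)\le\ell_n(\barbbeta,\betaz)+4\lambda\|\barbbeta_{T^c}\|_1+\frac{2\lambda^2|T|}{\bar\kappa_{T,\gamma,\bomega}}+2E.\]

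Inserting $\lambda=\gamma\sigma^*\sqrt{2\log(p/\delta)/n}$ turns $2\lambda^2|T|/\bar\kappa_{T,\gamma,\bomega}$ into the $\gamma^2\bar\kappa_{T,\gamma,\bomega}^{-1}$ term of $r_{n,p,T}$, while $2E$ is dominated by $\frac{4\sigma^{*2}|T|}{n}+\frac{8\sigma^{*2}\log(p/\delta)}{n}$ (using $2(|T|\log(1/\delta))^{1/2}\le|T|+\log(1/\delta)$ and $\log(1/\delta)\le\log(p/\delta)$), which supplies the $\log^{-1}(p/\delta)$ and $2|T|^{-1}$ terms; this is \eqref{eq:3.1}. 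For \eqref{eq:3.3} the derivation is verbatim with $\lambda=\gamma\sigma^*\rho_T\sqrt{2\log(p/\delta)/n}$ and the normalised weights $\bar\bomega$: as $\rho_T=\max_\ell\omega_\ell$ one has $(\lambda/\gamma)\bar\omega_j=\sigma^*\omega_j\sqrt{2\log(p/\delta)/n}$, so the very same $\Omega_1$ applies, the relevant cone is $\mathcal C_0(T,\gamma,\bar\bomega)$, and the same substitution reproduces $\bar r_{n,p,T}$; here $T$ is fixed through $\lambda$, so only the infimum over $\barbbeta$ survives. The main obstacle is the conversion above — making the $\Pi_T\bxi$ fluctuations enter only additively through $E$ and arranging a single scalar optimisation in $u$ that absorbs both the compatibility and the conversion cross-terms while keeping the leading constant equal to one. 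A secondary technical point is that, since $\Omega_1\cap\Omega_2$ depends on $T$, the infimum over $T$ in \eqref{eq:3.1} is to be read as applying the fixed-$T$ bound to each deterministic $T$.
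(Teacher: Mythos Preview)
Your proof is correct and, once unwound, is essentially the paper's argument. The paper works directly with $\betaz$, obtains the basic inequality \eqref{1.2} and then splits the noise as $\bxi=\Pi_T\bxi+(\bfI_n-\Pi_T)\bxi$; your route via $\betazT$ with reduced noise $\boldg=(\bfI_n-\Pi_T)\bxi$ followed by the exact conversion identity $\ell_n(\betaL,\betaz)-\ell_n(\barbbeta,\betaz)=\ell_n(\betaL,\betazT)-\ell_n(\barbbeta,\betazT)+\frac2n\langle\bfX\bdelta,\Pi_T\bxi\rangle$ is an algebraic reparametrisation of the same splitting (adding your two identities recovers \eqref{1.2} verbatim, since $\boldg+\Pi_T\bxi=\bxi$). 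The subsequent steps---bounding $\frac2n\langle\bfX\bdelta,\Pi_T\bxi\rangle$ by Cauchy--Schwarz, applying the weighted compatibility factor to control $A$, and completing the square in $u=\ell_n(\betaL,\barbbeta)^{1/2}$---match the paper's \eqref{1.4}--\eqref{1.41} and yield identical constants; your handling of the $T$-dependent event and of the second claim \eqref{eq:3.3} via $\bar\bomega$ also coincides with the paper's.
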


The main difference between inequalities (\ref{eq:3.1}) and (\ref{eq:3.3}) is the presence
of the factor $\rho_T ^2$ in the numerator of the last term. This factor is always not larger than $1$. However, in order to introduce it we
needed to replace the compatibility factor $\bar\kappa_{T, \gamma,\bomega}$ by $\bar\kappa_{T, \gamma, \bar\bomega}$ and to
deflate $\lambda$ by the factor $\rho_T $. From a practical point of view, this last modification is not always easy to implement, since
the quantity $\rho_T $ depends on the set $T$ which can be thought of as the best possible set of covariates. This set being unknown, the
claim of (\ref{eq:3.3}) is to be interpreted as a theoretical justification for choosing the tuning parameter smaller than the
universal value. Such a choice can be made, for instance, by cross validation. It is also possible to perform a sparse PCA on the
set of covariates in order to choose a suitable value of $\lambda$ (cf.\ Section~\ref{ss:5.2} for more details).

\begin{example}[Total variation penalty for piecewise constant functions]\label{exTV}
In image denoising and signal processing, total variation type penalties are often employed to enforce similarity between neighboring
pixels or values of the signal. In the one-dimensional setting, the problem may be formulated as follows. Assume that a piecewise constant function
$f^*:[0,1]\to\mr$ is observed on the regular grid in a noisy environment: $y_i=f^*(i/n)+\xi_i$, for $i=1,\ldots,n$. Let us denote the unknown vector
of values of $f^*$ on the grid by $\boldf^*  = (f^*(1/n),\ldots,f^*(1))^\top$ and define the total variation penalty of a vector $\boldf\in\mr^n$
by $\|\boldf\|_{\rm TV} = \sum_{i=1}^n |f_{i}-f_{i-1}|$ with the convention that $f_0=0$. Then, the TV-penalized least squares estimator of $\boldf^*$ is defined as
\begin{align}\label{eq:TV}
\hat\boldf{}^{\rm TV} \in \text{arg}\min_{\boldf\in\mr^n} \Big\{\frac1n\|\by-\boldf\|_2^2+\lambda\|\boldf\|_{\rm TV}\Big\},
\end{align}
where $\lambda>0$ is a tuning parameter. This estimator, hereafter referred to as TV-estimator,
has been shown to be closely related to the Lasso \cite{Zaid07,Zaid10}. More precisely, if we define the vector of differences $\bbeta\in\mr^n$
by $\beta_j=f_j-f_{j-1}$, then (\ref{eq:TV}) is equivalent to (\ref{lasso}) with the $n\times n$ design matrix $\bfX=(\mathds 1(i\ge j))_{i,j}$.
Despite its popularity in applications, it is very surprising that the TV-estimator and, more precisely, its prediction accuracy
is not yet completely understood. In a purely asymptotic setting $n\to\infty$, \cite{MammenvdG} establish that the $\ell_2$-loss of
$\hat\boldf{}^{\rm TV}$, defined by $\frac1n\|\hat\boldf{}^{\rm TV}-\boldf^*\|_2^2$, is of the order of $n^{-2/3}$. This is, however, just an upper bound
on the risk of $\hat\boldf{}^{\rm TV}$, and this upper bound is much worse than the optimal rate of convergence, known to be $n^{-1}$ in the problem of
estimating piecewise constant functions. This optimal rate is achieved, for instance, by the penalized least squares with a penalty proportional
to the number of jumps, \textit{i.e.}, the $\ell_0$-norm of the difference vector (see \cite{Munk} and the references therein). The question whether
it is possible or not to improve the rate $n^{-2/3}$ for the TV-penalized estimator and, eventually, to achieve the minimax rate, has remained open so far.

On the other hand, more recent papers \cite{Zaid07,Zaid10} propose nonasymptotic risk bounds for $\hat\boldf{}^{\rm TV}$. Without any
assumption, they show that for $\lambda\asymp n^{-1/2}$ their risk bound is of the order of $n^{-1/2}$. They also notice that if the
TV-estimator with $\lambda\asymp n^{-1}$ has only a few jumps, then its $\ell_2$-loss is of the optimal order $n^{-1}$. This result is,
however, not very satisfactory since reducing $\lambda$ down to the order $n^{-1}$ is quite likely to significantly increase the number of
jumps in the TV-estimator. Put differently, there is no theoretical result assessing the probability of getting only a few jumps when
$\lambda\asymp n^{-1}$. This raises some new questions: (a) Is it possible to establish sharp oracle inequalities for TV-estimator
with optimal rate of convergence? (b) Is it really necessary to choose
$\lambda$ very small for achieving the optimal rate? (c) What is the rate of convergence in terms of the number of jumps, when the latter is allowed to
increase with $n$? In order to show that the theoretical tools developed in this section provide almost exhaustive answers to these questions, we need the
following result.

\begin{proposition}\label{prop:3.1}
Let $\bfX$ be the $n\times n$ matrix with entries $x_{ij}=\mathds 1(i\ge j)$ and let $\ba\in\mr_+^n$ be a given vector of ``weights''.
For every $T=\{j_1,\ldots,j_s\}\subset [n]$ and for every $\bu\in\mr^n$, we have
$$
\|\bu_T\odot \ba_{T}\|_1-\|\bu_{T^c}\odot \ba_{T^c}\|_1 \le
4\|\bfX\bu\|_2 \bigg(2\sum_{j\in[n]}|a_j-a_{j+1}|^2+2(s+1)\|\ba\|_\infty^2\Delta_{\min,T}^{-1}\bigg)^{1/2},
$$
where $\Delta_{\min,T} = \min_{\ell\in[s+1]}|j_{\ell+1}-j_\ell|$ with the convention $j_0=1$ and $j_{s+1}=1$.
\end{proposition}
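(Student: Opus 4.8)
The plan is to reduce the claim to a one-dimensional interpolation problem, exploiting that $\bfX$ is the cumulative-sum operator. Writing $\bv:=\bfX\bu$, so that $v_i=\sum_{j\le i}u_j$ and hence $u_k=v_k-v_{k-1}$ with the convention $v_0=0$, the left-hand side is a weighted sum of increments of $\bv$. First I would linearize the absolute values: since $\ba$ has nonnegative entries, $\|\bu_T\odot\ba_T\|_1=\sum_{k\in T}\sign(u_k)a_k\,u_k$, whereas for $k\in T^c$ one has the elementary inequality $-|u_k|a_k\le s_k u_k$ for \emph{every} $s_k\in[-a_k,a_k]$ (multiply $-|u_k|\le t_k u_k$, valid for $t_k\in[-1,1]$, by $a_k\ge0$). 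Consequently, for any $\bs\in\mr^n$ with $s_k=\sign(u_k)a_k$ fixed on $T$ and $|s_k|\le a_k$ free on $T^c$,
\[
\|\bu_T\odot\ba_T\|_1-\|\bu_{T^c}\odot\ba_{T^c}\|_1\le \sum_{k=1}^n s_k u_k .
\]
The key gain is that the values of $\bs$ on $T^c$ are now at my disposal.

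Next I would apply summation by parts. Using $u_k=v_k-v_{k-1}$ and setting $s_{n+1}:=0$ and $v_0:=0$, we get $\sum_k s_k u_k=\sum_{k=1}^n(s_k-s_{k+1})v_k$, and Cauchy--Schwarz then yields
\[
\sum_{k=1}^n s_k u_k\le\Big(\sum_{k=1}^n (s_k-s_{k+1})^2\Big)^{1/2}\Big(\sum_{k=1}^n v_k^2\Big)^{1/2}=\Big(\sum_{k=1}^n (s_k-s_{k+1})^2\Big)^{1/2}\|\bfX\bu\|_2 .
\]
It remains to choose $\bs$ on $T^c$ so that $\sum_k(s_k-s_{k+1})^2$ is controlled by the two quantities in the statement. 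I would write $s_k=\epsilon_k a_k$ with $\epsilon_k\in[-1,1]$ (which automatically enforces $|s_k|\le a_k$), fix $\epsilon_{j_\ell}=\sign(u_{j_\ell})$ at the jump locations and $\epsilon_{n+1}=0$ at the right boundary, and let $\epsilon_k$ be the piecewise-linear interpolant of these values between consecutive control points (taken constant to the left of $j_1$).

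Finally, the splitting $s_k-s_{k+1}=\epsilon_k(a_k-a_{k+1})+a_{k+1}(\epsilon_k-\epsilon_{k+1})$ together with $|\epsilon_k|\le1$ and $a_{k+1}\le\|\ba\|_\infty$ gives $(s_k-s_{k+1})^2\le 2(a_k-a_{k+1})^2+2\|\ba\|_\infty^2(\epsilon_k-\epsilon_{k+1})^2$ (with $a_{n+1}:=0$). Summing, the first part produces the term $2\sum_{j\in[n]}|a_j-a_{j+1}|^2$. For the second, on each segment between two consecutive control points the interpolant is linear with total variation at most $2$ over a number of steps at least $\Delta_{\min,T}$, so its contribution is at most $4/\Delta_{\min,T}$; since there are at most $s+1$ such segments, $\sum_k(\epsilon_k-\epsilon_{k+1})^2\le 4(s+1)/\Delta_{\min,T}$. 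Combining bounds $\sum_k(s_k-s_{k+1})^2$ by $2\sum_j|a_j-a_{j+1}|^2+8(s+1)\|\ba\|_\infty^2\Delta_{\min,T}^{-1}$, which is comfortably below $16$ times the bracket in the statement; plugging this into the Cauchy--Schwarz bound reproduces the factor $4\|\bfX\bu\|_2$. The main obstacle I anticipate is the bookkeeping around the boundary terms---aligning the right-end condition $s_{n+1}=0$ and the endpoint conventions $j_0,j_{s+1}$ with $\Delta_{\min,T}$---rather than any analytic difficulty; the generous constant $4$ leaves ample room to absorb the exact values of the interpolation constants.
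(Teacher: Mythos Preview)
Your proof is correct and takes a genuinely different route from the paper's. Both arguments start the same way: linearize the absolute values by introducing signs that are fixed on $T$ and free on $T^c$, then perform Abel summation to transfer the problem to the cumulative sums $\bv=\bfX\bu$. The divergence is in how the free signs on $T^c$ are chosen. The paper restricts to signs in $\{\pm1\}$ and uses a \emph{probabilistic} construction: on each block $B_\ell=\llbracket j_{\ell-1},j_\ell\llbracket$ it builds a Markov chain of random $\pm1$'s with transition probability $1-(2\Delta_\ell)^{-1}$, shows that the correct boundary sign is hit with probability at least $1/4$, bounds the expectation of the resulting sum, and invokes Markov's inequality to extract a deterministic choice. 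You instead relax to signs in $[-1,1]$ (writing $s_k=\epsilon_k a_k$) and pick $\epsilon$ \emph{deterministically} as the piecewise-linear interpolant between the prescribed values at the points of $T$ and at $n+1$; the square-variation of a linear interpolant over $\Delta$ steps with endpoints in $[-1,1]$ is at most $4/\Delta$, which immediately gives your bound after Cauchy--Schwarz.

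Your approach is more elementary---no randomness, no Markov-chain estimates---and in fact yields a slightly sharper constant: your bound on $\sum_k(s_k-s_{k+1})^2$ is at most $4$ times the bracket in the statement, so the leading factor could be $2$ rather than $4$. The paper's probabilistic argument, on the other hand, is presented by the authors as a technique of independent interest for bounding compatibility factors in other correlated designs; whether linear interpolation would be as flexible in those settings is less clear. Your anticipated bookkeeping issues (the conventions $s_{n+1}=0$, $a_{n+1}$, and the leftmost block being constant) are indeed minor and are absorbed by the slack in the constant, exactly as you say.
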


The proof of this result, deferred to Section~\ref{sec:proofs}, is carried out using a completely new approach based on
a probabilistic argument. We believe that this argument may be used in other situations for evaluating the compatibility factors theoretically.
All the previous efforts for evaluating the compatibility and restricted eigenvalue constants, focused on weakly correlated designs
(see, for instance, \cite{BRT}). In contrast with this, our approach provides bounds on compatibility factors even for strongly
correlated designs. Indeed, many pairs of columns of matrix $\bfX$ corresponding to the TV-estimator have correlation of the order of $1-n^{-1/2}$.

For applying Proposition~\ref{prop:3.1} to the TV-estimator, we choose $a_j = 1$ for every $j$ belonging to the set $T$, which presumably contains
the jumps of $f^*$, and  $a_j = 1-\frac1{2\sqrt{n}}\|(\bfI_n-\Pi_{T})\bx^j\|_2$, $j\in T^c$. In what follows, we denote by $\Delta_{\min,T}$
the smallest distance between two jumps, that is $\Delta_{\min,T}=\min_{\ell\in[s+1]}|j_{\ell}-j_{\ell-1}|$ with the convention that $j_0=1$ and $j_{s+1}=n+1$.

\begin{proposition}\label{prop:3.2}
Let $\boldf^*$ be a piecewise constant vector and $J^*=\{j\in[n]:f^*_j\not=f^*_{j+1}\}$. If the tuning parameter
satisfies $\lambda=2\sigma^*\{(2/n)\log(n/\delta)\}^{1/2}$, then  on an event of probability at least $1-2\delta$,
the following bound holds for every  nonempty $T\subset [n]$:
\begin{align}\label{eq:riskTV}
\frac1n\|\hat\boldf{}^{\rm TV}-\boldf^*\|_2^2
&\le \inf_{\bar\boldf\in\mr^n} \bigg\{ \frac1n\|\bar\boldf-\boldf^*\|_2^2 + 4\lambda \|\bar\boldf_{T^c}\|_{\rm TV}\bigg\} +
\frac{4\sigma^*{}^2|T|\log(n/\delta)}{n}\cdot r_{n,T},
\end{align}
where the bounded remainder term is given by $r_{n,T}= {3}+{256}(\log(n)+(n/\Delta_{\min,T}))$.
\end{proposition}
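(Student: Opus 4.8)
The plan is to obtain Proposition~\ref{prop:3.2} by specializing the oracle inequality \eqref{eq:3.1} of Theorem~\ref{thm3} to the total-variation design and then controlling the weighted compatibility factor with the help of Proposition~\ref{prop:3.1}. First I would set up the dictionary between the two problems: with $\bfX=(\mathds 1(i\ge j))_{i,j}$ and $\beta_j=f_j-f_{j-1}$ (so that $f_0=0$) one has $\boldf=\bfX\bbeta$ and $\|\boldf\|_{\rm TV}=\|\bbeta\|_1$, hence the TV-estimator equals $\bfX\betaL$, $\boldf^*=\bfX\betaz$, $\frac1n\|\hat\boldf{}^{\rm TV}-\boldf^*\|_2^2=\ell_n(\betaL,\betaz)$, $\frac1n\|\bar\boldf-\boldf^*\|_2^2=\ell_n(\barbbeta,\betaz)$, and $\|\bar\boldf_{T^c}\|_{\rm TV}=\|\barbbeta_{T^c}\|_1$, while the jump set $J^*$ is exactly the support of $\betaz$. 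Reading the stated $\lambda=2\sigma^*\{(2/n)\log(n/\delta)\}^{1/2}$ as the instance $\gamma=2$, $p=n$ of the tuning parameter in Theorem~\ref{thm3}, inequality \eqref{eq:3.1} translates verbatim into \eqref{eq:riskTV} with remainder $r_{n,n,T}=\log^{-1}(n/\delta)+2|T|^{-1}+4\,\bar\kappa_{T,2,\bomega}^{-1}$. Since $\log^{-1}(n/\delta)\le1$ (once $n/\delta\ge e$) and $2|T|^{-1}\le2$, the first two terms sum to at most $3$, so the whole statement reduces to proving $4\,\bar\kappa_{T,2,\bomega}^{-1}\le 256\,(\log n+n/\Delta_{\min,T})$.

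The heart of the argument is a lower bound on $\bar\kappa_{T,2,\bomega}$ obtained from Proposition~\ref{prop:3.1} with the weights prescribed just before that statement, namely $a_j=1$ for $j\in T$ and $a_j=1-\tfrac1{2\sqrt n}\|(\bfI_n-\Pi_T)\bx^j\|_2=1-\tfrac12\,\omega_j(T,\bfX)$ for $j\in T^c$; equivalently $\ba=\mathbf 1_p-\tfrac12\bomega$. With $\bu=\bdelta$ the left-hand side of Proposition~\ref{prop:3.1} becomes exactly $\|\bdelta_T\|_1-\|(\mathbf 1_p-\tfrac12\bomega)_{T^c}\odot\bdelta_{T^c}\|_1$, which is precisely the (unsquared) denominator appearing in the definition of $\bar\kappa_{T,2,\bomega}$. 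Squaring Proposition~\ref{prop:3.1}, the factor $\|\bfX\bdelta\|_2^2$ cancels against the numerator of the compatibility factor and, uniformly over $\bdelta\in\mathcal C_0(T,2,\bomega)$, leaves the $\bdelta$-free bound
\begin{equation*}
\bar\kappa_{T,2,\bomega}\ \ge\ \frac{|T|}{16\,n\big(2\sum_{j}|a_j-a_{j+1}|^2+2(|T|+1)\|\ba\|_\infty^2\Delta_{\min,T}^{-1}\big)}.
\end{equation*}

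It then remains to estimate the two denominator terms for the TV design. Because $\omega_j(T,\bfX)\in[0,1]$ we have $a_j\in[\tfrac12,1]$, so $\|\ba\|_\infty\le1$, and the $\Delta_{\min,T}^{-1}$ term contributes, after the factor $\gamma^2=4$, at most $\frac{128\,n(|T|+1)}{|T|\,\Delta_{\min,T}}\le\frac{256\,n}{\Delta_{\min,T}}$ via $(|T|+1)/|T|\le2$. For the first term I would compute the projection residuals explicitly: if $j$ lies strictly between consecutive points $j_k<j_{k+1}$ of $T$ (augmented by the boundaries $1$ and $n+1$), then $\|(\bfI_n-\Pi_T)\bx^j\|_2^2=\frac{(j-j_k)(j_{k+1}-j)}{j_{k+1}-j_k}$, so that $\omega_j(T,\bfX)=n^{-1/2}g(j-j_k)$ with $g(a)=\sqrt{a(m-a)/m}$ and $m=j_{k+1}-j_k$. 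Since $a_j=1-\tfrac12\omega_j$ throughout, $\sum_j|a_j-a_{j+1}|^2=\tfrac14\sum_j|\omega_{j+1}-\omega_j|^2$, and I would establish the per-interval estimate $\sum_a|g(a+1)-g(a)|^2=O(\log m)$; summing over the at most $|T|+1$ intervals (each of length $\le n$) gives $\sum_j|\omega_{j+1}-\omega_j|^2\lesssim\frac{(|T|+1)\log n}{n}$, whence the first term is at most $256\log n$ after the constants are tracked. Adding the two contributions yields $4\,\bar\kappa_{T,2,\bomega}^{-1}\le256\,(\log n+n/\Delta_{\min,T})$ and hence $r_{n,n,T}\le r_{n,T}$. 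The hard part will be the per-interval bound $\sum_a|g(a+1)-g(a)|^2=O(\log m)$ with a constant small enough to fit under $256$: the naive concavity bound only gives $O(\sqrt m)$, and one must exploit the $\sqrt a$ behaviour of $g$ near each interval endpoint to turn the sum into a harmonic-type series $\sum_a a^{-1}$ and so extract the logarithm.
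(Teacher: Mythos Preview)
Your proposal is correct and follows essentially the same route as the paper's proof: apply Theorem~\ref{thm3} with $\gamma=2$, bound the first two terms of $r_{n,p,T}$ by $3$, and then invoke Proposition~\ref{prop:3.1} with exactly the weights $a_j=1-\tfrac12\omega_j(T,\bfX)$ to lower-bound $\bar\kappa_{T,2,\bomega}$. The paper carries out the ``hard part'' you identify precisely as you anticipate: writing the squared increments of $g$ as $\frac{((j-1)(\Delta_\ell-(j-1))-j(\Delta_\ell-j))^2}{\Delta_\ell\,(\sqrt{\cdot}+\sqrt{\cdot})^2}$, bounding the numerator by $\Delta_\ell^2$ and the denominator by $\Delta_\ell\cdot j\cdot\Delta_\ell/2$ for $j\le\Delta_\ell/2$ (plus symmetry), and summing the resulting $j^{-1}$ terms to get $\sum_j|a_j-a_{j+1}|^2\le (|T|+1)\log(n)/n$, which delivers exactly the constant $256$ after using $(|T|+1)/|T|\le 2$.
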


The risk bound (\ref{eq:riskTV}) drastically improves the results on the $\ell_2$-loss of the TV-estimator ever proved in the
literature. Not only it holds for finite samples, is with a leading constant one and provides a risk bound valid with high probability,
but, more importantly, it has a remainder term of the order of $|T|(\log(n))^2/n$. The rate of decay to zero of this term when $n\to\infty$
is much faster than what was known before and should be contrasted with $n^{-2/3}$ established in \cite{MammenvdG}. More precisely, when
the true function $\boldf^*$ is piecewise constant on a partition of $s$ intervals, taking in (\ref{eq:riskTV}) $\bar\boldf=\boldf^*$ and
$T=\{j\in[n]:f^*_j\not=f^*_{j-1}\}$, the terms in accolades at the right hand-side vanish and one gets the inequality
\begin{align}\label{eq:riskTV1}
\bfP\Big(\frac1n\|\hat\boldf{}^{\rm TV}-\boldf^*\|_2^2
&\le \frac{4\sigma^*{}^2|J^*|\log(n/\delta)}{n}\cdot \big({3}+{256}(\log(n)+(n/\Delta_{\min,J^*}))\big)\Big)\ge 1-2\delta.
\end{align}
When the vector $\boldf\in\mr^n$ consists of the values of a piecewise constant function $f$ at the points $\{i/n, i\in[n]\}$, of the regular grid,
the term $n/\Delta_{\min,J^*}$ is bounded by a constant (for $n\to\infty$ and fixed $f$). In this case,  the upper bound in (\ref{eq:riskTV1})
is of the nearly optimal order. Furthermore, risk bound (\ref{eq:riskTV1}) holds for every $|J^*|$, even if it tends to infinity with $n$.
To the best of our knowledge, this is  the first result of this type. All the previous asymptotic results considered the number of jumps $|J^*|$
as fixed. Moreover, our result is valid for the universal choice of the tuning parameter and not the very small one evoked in \cite{Zaid10}.
To complete this discussion, let us mention that the constant 256 in (\ref{eq:riskTV1}) is definitely sub-optimal and it is out of scope of this
work to look for the best possible constants.
\end{example}

\sectionlimits
\section{``Slow'' rates accounting for high correlations}\label{sec:slow}

In the preceding section, we have discussed fast rate bounds, that is, bounds that contain the tuning parameters to the power two. In this section, we turn to slow rate bounds, that is, bounds that contain the tuning parameters to the power one. We present slow rate bounds that entail---in contrast to what the nomenclature suggests---fast rates if the correlations are properly incorporated into the tuning parameters.  {These results considerably extend  and sharpen insights obtained in~\cite{HebiriLederer,vdGeer11} and are of particular interest for the Least-Squares estimator with total variation penalty (TV-estimator).}  We can deduce, in particular, that the TV-estimator is almost minimax for the estimation of monotone or H\"older continuous signals and, therefore, improve on results in~\cite{MammenvdG}, for example.

The following slow rate bound is the main result of this section.
\begin{theorem}\label{thm:4}
Let $T\subset [p]$ be a set of indices and let $\delta>0,~\gamma\geq 1$ be constants. Then,
if the tuning parameter $\lambda$ is not smaller than $\gamma\sigma^*\rho_{T}\sqrt{2\log(p/\delta)/n}$, the Lasso~\eqref{lasso} fulfills
\begin{align*}
\ell_n(\betaL\!\!,\betaz)+\frac{2(\gamma{-1})\lambda}{\gamma}\|\betae\|_1
&\le \inf_{\barbbeta\in\mr^p} \Big\{ \ell_n(\barbbeta,\betaz)  + \frac{2(\gamma+1)\lambda}{\gamma} \|\barbbeta\|_1\Big\}
+ \frac{2\sigma^*{}^2(|T|+2\log(1/\delta))}{n}
\end{align*}
with probability at least $1-2\delta$.
\end{theorem}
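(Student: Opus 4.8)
The plan is to start from the basic inequality (first-order optimality / KKT characterization) of the Lasso. By definition of $\betaL$ as a minimizer of the objective in \eqref{lasso}, for any $\barbbeta\in\mr^p$ we have
\begin{align*}
\frac{1}{2n}\|\by-\bfX\betaL\|_2^2+\lambda\|\betae\|_1 \le \frac{1}{2n}\|\by-\bfX\barbbeta\|_2^2+\lambda\|\barbbeta\|_1.
\end{align*}
Substituting $\by=\bfX\betaz+\bxi$ and expanding both squared norms, the cross terms produce a stochastic term of the form $\frac1n\bxi^\top\bfX(\betae-\barbbeta)$. After rearranging and using $\ell_n(\bbeta,\bbeta')=\frac1n\|\bfX(\bbeta-\bbeta')\|_2^2$, I expect to arrive at
\begin{align*}
\ell_n(\betaL,\betaz)+2\lambda\|\betae\|_1 \le \ell_n(\barbbeta,\betaz)+2\lambda\|\barbbeta\|_1+\frac2n\bxi^\top\bfX(\betae-\barbbeta).
\end{align*}

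The heart of the argument is controlling the stochastic term $\frac2n\bxi^\top\bfX(\betae-\barbbeta)$. First I would split the design into its projection onto $V_T$ and its orthogonal complement, writing $\bfX\bu=\Pi_T\bfX\bu+(\bfI_n-\Pi_T)\bfX\bu$ for $\bu=\betae-\barbbeta$. For the component in $V_T$, since $\dim V_T\le|T|$, the quantity $\frac1n\|\Pi_T\bxi\|_2^2$ is (up to scaling) a $\chi^2$ with at most $|T|$ degrees of freedom, and a Cauchy–Schwarz split followed by the elementary bound $ab\le \tfrac12 a^2+\tfrac12 b^2$ absorbs this part into $\ell_n(\betaL,\betaz)$ (half of it) plus the additive $\chi^2$-tail term, which is exactly where the $\frac{2\sigma^*{}^2(|T|+2\log(1/\delta))}{n}$ on the right comes from via the standard tail bound $\mpr(\|\Pi_T\bxi\|_2^2\ge \sigma^*{}^2(|T|+2\sqrt{|T|t}+2t))\le e^{-t}$. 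For the orthogonal component, I would bound $\frac2n|\bxi^\top(\bfI_n-\Pi_T)\bfX\bu|$ coordinatewise: it is at most $\frac2n\sum_{j}|u_j|\,|\bxi^\top(\bfI_n-\Pi_T)\bx^j|$, and since $\omega_j=\omega_j(T,\bfX)=\frac1{\sqrt n}\|(\bfI_n-\Pi_T)\bx^j\|_2$ and $(\bfI_n-\Pi_T)\bx^j$ has Euclidean length $\sqrt n\,\omega_j\le \sqrt n\,\rho_T$, each $\bxi^\top(\bfI_n-\Pi_T)\bx^j/(\sqrt n\,\omega_j)$ is Gaussian with variance at most $\sigma^*{}^2$. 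A union bound over $j\in[p]$ then shows that with probability at least $1-\delta$,
\begin{align*}
\frac2n\big|\bxi^\top(\bfI_n-\Pi_T)\bfX\bu\big| \le 2\sigma^*\rho_T\sqrt{\tfrac{2\log(p/\delta)}n}\,\|\bu\|_1 \le \tfrac{2\lambda}{\gamma}\|\betae-\barbbeta\|_1,
\end{align*}
where the final step uses precisely the hypothesis $\lambda\ge\gamma\sigma^*\rho_T\sqrt{2\log(p/\delta)/n}$.

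From here the argument is the standard slow-rate bookkeeping. Using the triangle inequality $\|\betae-\barbbeta\|_1\le\|\betae\|_1+\|\barbbeta\|_1$ on the $\ell_1$ penalty term, I would combine it with the $+2\lambda\|\betae\|_1$ on the left and the $+2\lambda\|\barbbeta\|_1$ carried over. Collecting the $\|\betae\|_1$ contributions gives $(2\lambda-\tfrac{2\lambda}\gamma)\|\betae\|_1=\tfrac{2(\gamma-1)\lambda}\gamma\|\betae\|_1$ on the left, and the $\|\barbbeta\|_1$ contributions give $(2\lambda+\tfrac{2\lambda}\gamma)\|\barbbeta\|_1=\tfrac{2(\gamma+1)\lambda}\gamma\|\barbbeta\|_1$ on the right, matching the stated coefficients exactly. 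Taking the infimum over $\barbbeta\in\mr^p$ and intersecting the two probability-$(1-\delta)$ events (one for the $\chi^2$ tail, one for the Gaussian maximum over $j$) via a union bound yields the claim on an event of probability at least $1-2\delta$.

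The main obstacle I anticipate is the careful splitting of the stochastic term so that the $V_T$ part is \emph{additively} absorbed (yielding the clean $|T|+2\log(1/\delta)$ term with no compatibility factor), while the orthogonal part is controlled \emph{multiplicatively} by $\|\betae-\barbbeta\|_1$ and exactly consumed by deflating $\lambda$. Getting the constants to line up — in particular choosing the $ab\le\tfrac12a^2+\tfrac12 b^2$ split so that only half of $\ell_n(\betaL,\betaz)$ is used up and the other half survives on the left — is the delicate part; the rest is routine.
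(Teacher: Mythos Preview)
Your decomposition of the stochastic term via $\Pi_T$ and $\bfI_n-\Pi_T$, the two probability events, and the final $\ell_1$ bookkeeping are all correct and match the paper. The gap is in the starting inequality and, consequently, in the absorption step you yourself flag as ``the delicate part.''

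The basic inequality you obtain from the objective comparison,
\[
\ell_n(\betaL,\betaz)+2\lambda\|\betae\|_1 \le \ell_n(\barbbeta,\betaz)+2\lambda\|\barbbeta\|_1+\tfrac2n\,\bxi^\top\bfX(\betae-\barbbeta),
\]
is too weak to get leading constant one. Cauchy--Schwarz on the $\Pi_T$ part gives $\tfrac{2}{n}\|\Pi_T\bxi\|_2\,\|\bfX(\betae-\barbbeta)\|_2$, and the AM--GM split produces $\ell_n(\betae,\barbbeta)$, \emph{not} $\ell_n(\betae,\betaz)$. There is no copy of $\ell_n(\betae,\barbbeta)$ on the left of your inequality to cancel this, and routing it through $\ell_n(\betae,\betaz)$ via the triangle inequality inflates the constant in front of $\ell_n(\barbbeta,\betaz)$. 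Your plan of ``using up half of $\ell_n(\betaL,\betaz)$'' therefore cannot yield the stated result.

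The fix --- and this is what the paper does --- is to start instead from the KKT condition
\[
\tfrac{1}{n}(\barbbeta-\betae)^\top\bfX^\top(\by-\bfX\betae) \le \lambda(\|\barbbeta\|_1-\|\betae\|_1)
\]
and apply the polarization identity $2\langle \bfX\bar\bdelta,\bfX\bdelta\rangle=\|\bfX\bar\bdelta\|_2^2+\|\bfX\bdelta\|_2^2-\|\bfX(\bar\bdelta-\bdelta)\|_2^2$ with $\bar\bdelta=\betae-\barbbeta$ and $\bdelta=\betae-\betaz$. This yields the stronger starting inequality
\[
\ell_n(\betae,\barbbeta)+\ell_n(\betae,\betaz) \le \ell_n(\barbbeta,\betaz)+\tfrac{2}{n}\,\bar\bdelta^\top\bfX^\top\bxi+2\lambda(\|\barbbeta\|_1-\|\betae\|_1),
\]
with the extra term $\ell_n(\betae,\barbbeta)$ on the left that exactly absorbs the AM--GM contribution $\tfrac1n\|\bfX\bar\bdelta\|_2^2$ coming from the $\Pi_T$ bound. (Equivalently: because the smooth part of the Lasso objective is quadratic, the KKT inequality is stronger than the objective comparison by precisely $\tfrac12\ell_n(\betae,\barbbeta)$.) After this one change, the remainder of your argument goes through verbatim and produces the theorem with the correct constants.
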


The factor $\rho_T$ (defined in Equation~\eqref{mainquantity}) in the lower bound for the tuning parameter $\lambda$ makes this theorem particularly interesting. Slow rate bounds can be derived using the proofs in~\cite{SunZhang}, but they involve tuning parameters of order $\sqrt{\log(p)/n}$. Theorem~\ref{thm:4} allows for considerably smaller tuning parameters if the variables are correlated; this then  leads to rates in between the classical slow rates and fast rates of order (neglecting logarithmic factors) $\sqrt{s/n}$ and  ${s/n}$, respectively. Theorem~\ref{thm:4} implies in particular fast rates for highly correlated covariates:

\begin{corollary}\label{cor:slow}
Assume that $T_n\subset [p]$ is as set of indices (that may depend on the sample size $n$) such that all covariates $\{\bx^j:j\in [p]\}$ are very close to the linear span of the set of vectors $\{\bx^j:j\in T_n\}$ in the sense that $\rho_{T_n} \lesssim n^{-r}$ for a positive constant $r>0$. Then, if the tuning parameter satisfies $\lambda\ge c\sigma^*\sqrt{{\log(p)/n^{2r+1}}}$ for a sufficiently large constant $c>0$,  the Lasso~\eqref{lasso} fulfills
\begin{align}\label{eq:4.2}
\ell_n(\betaL\!\!,\betaz)
&\lesssim  \bigg(\sqrt{\frac{\log(p)}{n^{2r+1}}}\,\|\betaz\|_1\bigg)\bigvee \frac{|T_n|}{n}
\end{align}
with high probability.\\
If, in particular, the irrelevant covariates $\{\bx^j:j\notin J^*\}$ are within Euclidean distance $1$ of the linear space spanned by the relevant covariates $\{\bx^j:j\in J^*\}$, it holds that $r= 1/2$ and, therefore, the Lasso achieves the fast rate $s/n$ up to logarithmic factors, provided that $\lambda$ is chosen of order ${\sqrt{\log(p)}/n}$ (with sufficiently large constants).
\end{corollary}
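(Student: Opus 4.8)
The plan is to obtain Corollary~\ref{cor:slow} as a direct specialization of Theorem~\ref{thm:4}, with essentially no new work beyond bookkeeping. First I would apply Theorem~\ref{thm:4} with the set $T=T_n$ and with the specific choice $\barbbeta=\betaz$ inside the infimum. Since $\ell_n(\betaz,\betaz)=0$ and $\gamma\ge 1$, the term $\frac{2(\gamma-1)\lambda}{\gamma}\|\betae\|_1$ on the left-hand side is nonnegative and may be discarded. This leaves, on an event of probability at least $1-2\delta$,
\[
\ell_n(\betaL,\betaz)\le \frac{2(\gamma+1)\lambda}{\gamma}\,\|\betaz\|_1+\frac{2\sigma^*{}^2(|T_n|+2\log(1/\delta))}{n}.
\]

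The second step is to check that the prescribed choice of $\lambda$ meets the hypothesis $\lambda\ge\gamma\sigma^*\rho_{T_n}\sqrt{2\log(p/\delta)/n}$ of Theorem~\ref{thm:4}. Using the assumption $\rho_{T_n}\lesssim n^{-r}$, this required lower bound is of order $\sigma^* n^{-r}\sqrt{\log(p/\delta)/n}\asymp\sigma^*\sqrt{\log(p)/n^{2r+1}}$ for fixed $\delta$. Hence taking $\lambda\ge c\sigma^*\sqrt{\log(p)/n^{2r+1}}$ with $c$ large enough to absorb $\gamma$, the factor $\sqrt 2$, the implicit constant in $\rho_{T_n}\lesssim n^{-r}$, and the $\log(1/\delta)$ dependence, the hypothesis holds. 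With this choice $\lambda\asymp\sigma^*\sqrt{\log(p)/n^{2r+1}}$, so the first term above is of order $\sqrt{\log(p)/n^{2r+1}}\,\|\betaz\|_1$ and the second is of order $|T_n|/n$; bounding a sum of two nonnegative quantities by twice their maximum gives exactly~\eqref{eq:4.2}.

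For the concrete statement, I would take $T_n=J^*$. The assumption that every irrelevant covariate lies within Euclidean distance $1$ of $V_{J^*}$ translates, through the definition $\rho_{J^*}=n^{-1/2}\max_{j}\|(\bfI_n-\Pi_{J^*})\bx^j\|_2$, into $\rho_{J^*}\le n^{-1/2}$ (relevant covariates contribute zero, irrelevant ones at most $1$). Thus $r=1/2$, so $n^{2r+1}=n^2$, $\lambda\asymp\sigma^*\sqrt{\log(p)}/n$, and with $|J^*|=s$ the bound reads $\ell_n(\betaL,\betaz)\lesssim\big(\sqrt{\log(p)}\,\|\betaz\|_1/n\big)\bigvee(s/n)$. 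Under boundedness of the nonzero coefficients we have $\|\betaz\|_1\lesssim s$, so both terms are of order $s/n$ up to the $\sqrt{\log(p)}$ factor, which is the claimed fast rate up to logarithmic factors.

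I do not anticipate any genuine obstacle: the result is a corollary obtained by plugging $\barbbeta=\betaz$ into Theorem~\ref{thm:4} and dropping the nonnegative $\|\betae\|_1$ term. The only points demanding care are the bookkeeping of constants when passing the lower-bound condition on $\lambda$ through $\rho_{T_n}\lesssim n^{-r}$, fixing the role of $\delta$ so that ``with high probability'' is meaningful and the $\log(1/\delta)$ contribution remains subdominant to $|T_n|$, and correctly reading off $r=1/2$ from the distance-$1$ hypothesis via the definition of $\rho_{J^*}$.
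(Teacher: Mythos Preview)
Your proposal is correct and matches the paper's intended route: the corollary is stated without a separate proof precisely because it is obtained by plugging $\barbbeta=\betaz$ into Theorem~\ref{thm:4}, discarding the nonnegative $\frac{2(\gamma-1)\lambda}{\gamma}\|\betae\|_1$ term, and reading off the orders once $\rho_{T_n}\lesssim n^{-r}$ is inserted into the lower bound on $\lambda$. Your identification of $r=1/2$ from the distance-$1$ hypothesis via $\rho_{J^*}=n^{-1/2}\max_j\|(\bfI_n-\Pi_{J^*})\bx^j\|_2$ is exactly right, as is your observation that the final ``fast rate $s/n$'' claim implicitly assumes $\|\betaz\|_1\lesssim s$; the paper leaves this tacit.
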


\begin{remark}[Effective number of parameters]
  The bound \eqref{eq:4.2} can be further refined replacing the number of parameters $p$ by an effective number of parameters as described in \cite[Section 3.2.2]{HebiriLederer}. This effective number of parameters can be considerably smaller than $p$ if the correlations are high, therefore reducing the bound by a factor up to $\sqrt{\log(p)}$.
\end{remark}


Corollary~\ref{cor:slow} exhibits fast rates for highly correlated but not necessarily perfectly collinear designs. We call a design perfectly collinear if all covariates belong to the linear space spanned by the relevant covariates, that is, $\{\bx^j:j\in [p]\}\subset \text{Span}\{\bx^j:j\in J^*\}$. For these designs, fast rates can be deduced from known results. Corollary~\ref{cor:slow}, in contrast, exhibits fast rates even for designs that differ from perfectly collinear designs by an order of $n^{-1/2}$ (as measured by the maximal distance $\rho_{J^*}$, see Equation~\eqref{mainquantity}). {Thus, Corollary~\ref{cor:slow} is the forth key contribution mentioned in the Introduction.}

The dependence of the tuning parameters on the set $T$ in Theorem~\ref{thm:4} and Corollary~\ref{cor:slow} can lead to additional computational costs. For some applications, such as the total variation penalization discussed below, the set $T$ is completely predetermined. For some other applications, however, the set $T$ is not completely predetermined, promoting the minimization of $\rho_{T}$ over a class of sets (for example,  all sets with a fixed cardinality), which can be computationally expensive. Proposition~\ref{prop:4.1} below 
provides another risk bound that helps to evade minimizations with respect to $T$ under favorable circumstances. 

\begin{proposition}\label{prop:4.1}
Let $T\subset [p]$ be a set of indices and $\delta>0,~\gamma> 1$ be constants. If the tuning parameter satisfies
$\lambda\ge \gamma\sigma^*\rho_T\sqrt{{2\log(p/\delta)/n}}$, the Lasso~\eqref{lasso} fulfills
\begin{align*}
\ell_n(\betaL\!\!,\betaz)
&\le \frac{4\rho_T^2\gamma^2}{(\gamma-1)^2}
\|\betaz\|_1^2+\frac{4\sigma^*{}^2(|T|+2\log(1/\delta))}{n}+ \frac{2|T|\lambda^2}{\nu_T^2}
\end{align*}
with probability at least $1-2\delta$.
\end{proposition}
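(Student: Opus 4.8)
The plan is to split the prediction loss along the orthogonal decomposition of $\mr^n$ into $V_T$ and its orthogonal complement, and to treat the two pieces with different tools. Writing $\ell_n(\betaL,\betaz)=\frac1n\|\Pi_T\bfX(\betaL-\betaz)\|_2^2+\frac1n\|(\bfI_n-\Pi_T)\bfX(\betaL-\betaz)\|_2^2$, I would dispatch the first (in-span) summand exactly as in~\eqref{eq:2.2}, which is already a consequence of Theorem~\ref{thm1}: applying $(a+b)^2\le 2a^2+2b^2$ to~\eqref{eq:2.2} together with the chi-squared tail bound $\|\Pi_T\bxi\|_2^2\le 2\sigma^*{}^2(|T|+2\log(1/\delta))$, which holds on an event of probability at least $1-\delta$ (this is precisely the estimate used to derive~\eqref{eq:2.5}), produces the two terms $\frac{2|T|\lambda^2}{\nu_T^2}$ and $\frac{4\sigma^*{}^2(|T|+2\log(1/\delta))}{n}$ of the claimed bound.

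For the orthogonal summand the key elementary observation is that $\bfI_n-\Pi_T$ annihilates every column $\bx^j$ with $j\in T$, so that $(\bfI_n-\Pi_T)\bfX(\betaL-\betaz)=(\bfI_n-\Pi_T)\bfX_{T^c}(\betaL-\betaz)_{T^c}$. By the very definition of $\rho_T$ as the maximal normalized distance of a column from $V_T$, this gives $\frac1{\sqrt n}\|(\bfI_n-\Pi_T)\bfX(\betaL-\betaz)\|_2\le \rho_T\,\|(\betaL-\betaz)_{T^c}\|_1$. Hence the whole problem reduces to controlling the $\ell_1$ quantity $\|(\betaL-\betaz)_{T^c}\|_1$ by $\tfrac{2\gamma}{\gamma-1}\|\betaz\|_1$, which upon squaring and multiplying by $\rho_T^2$ yields the remaining term $\frac{4\rho_T^2\gamma^2}{(\gamma-1)^2}\|\betaz\|_1^2$.

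The heart of the proof is this $\ell_1$ control, and it is where I expect the main difficulty. I would derive it from the Lasso basic inequality, but comparing $\betaL$ to $\betazT$ rather than to $\betaz$, exploiting the identity $\by=\bfX\betazT+(\bfI_n-\Pi_T)\bxi$ from~\eqref{y}. The decisive feature of this comparison is that the effective noise $(\bfI_n-\Pi_T)\bxi$ lies in the orthogonal complement of $V_T$, so the only stochastic quantity that survives is $\frac1n\langle(\bfI_n-\Pi_T)\bx^j,\bxi\rangle$. Because $\|(\bfI_n-\Pi_T)\bx^j\|_2\le\sqrt n\,\rho_T$, a Gaussian union bound over the $p$ columns shows that on an event of probability at least $1-\delta$ one has $\frac1n\max_{j}|\langle(\bfI_n-\Pi_T)\bx^j,\bxi\rangle|\le\lambda/\gamma$, and this is exactly what the hypothesis $\lambda\ge\gamma\sigma^*\rho_T\sqrt{2\log(p/\delta)/n}$ guarantees. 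On that event the stochastic term is absorbed into $\frac\lambda\gamma\|\betaL-\betazT\|_1$ and, discarding the nonnegative quadratic term, the basic inequality collapses to the deterministic relation $\|\betaL\|_1\le\tfrac{\gamma+1}{\gamma-1}\|\betazT\|_1$.

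The last and most delicate step is to transfer this back to $\betaz$. Since $\betazT$ and $\betaz$ coincide on $T^c$ and differ on $T$ only through the in-model least-squares fluctuation $(\bfX_T^\top\bfX_T)^\dag\bfX_T^\top\bxi$, whose $\ell_1$ norm is governed by $\nu_T$ and $\|\Pi_T\bxi\|_2$ (via $\|(\betazT-\betaz)_T\|_1\le \tfrac{\sqrt{|T|}}{\nu_T\sqrt n}\|\Pi_T\bxi\|_2$), one must verify that passing from $\|\betazT\|_1$ to $\|\betaz\|_1$ only reintroduces quantities already accounted for in the first two terms and does not inflate the leading $\|\betaz\|_1^2$ term. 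Intersecting the correlation event with the chi-squared event gives the stated probability $1-2\delta$. The real work throughout is bookkeeping: one must make sure that every stochastic contribution is routed either into the $V_T$-component, where Theorem~\ref{thm1} and the $\chi^2$ tail swallow it, or into the $\rho_T$-scaled $\ell_1$ term, so that nothing leaks into the orthogonal component — this clean separation, and the entanglement between the $T$- and $T^c$-parts of $\betaL-\betazT$, is the subtle point that the comparison to $\betazT$ is designed to manage.
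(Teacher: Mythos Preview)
Your overall architecture matches the paper's: the orthogonal decomposition $\ell_n(\betae,\betaz)=\frac1n\|\Pi_T\bfX\bdelta\|_2^2+\frac1n\|(\bfI_n-\Pi_T)\bfX\bdelta\|_2^2$, the use of~\eqref{eq:2.2} together with the $\chi^2$ tail for the first summand, and the bound $\frac1{\sqrt n}\|(\bfI_n-\Pi_T)\bfX\bdelta\|_2\le \rho_T\|\bdelta\|_1$ for the second are exactly what the paper does. The divergence is only in how you obtain the $\ell_1$ control $\|\bdelta\|_1\le \frac{2\gamma}{\gamma-1}\|\betaz\|_1$.

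Here your detour through $\betazT$ is where the argument breaks. Your basic-inequality step does correctly give $\|\betae\|_1\le \frac{\gamma+1}{\gamma-1}\|\betazT\|_1$ on the correlation event, but the ``transfer back to $\betaz$'' does not come for free: $\|\betazT\|_1\le \|\betaz\|_1+\frac{\sqrt{|T|}}{\nu_T\sqrt n}\|\Pi_T\bxi\|_2$, so after squaring and multiplying by $\rho_T^2$ you pick up an extra contribution of order $\rho_T^2\,\frac{(\gamma+1)^2}{(\gamma-1)^2}\,\frac{|T|}{\nu_T^2 n}\|\Pi_T\bxi\|_2^2$. This is \emph{not} dominated by any of the three terms in the statement (it scales like $|T|^2\sigma^{*2}/(\nu_T^2 n)$, which neither $\frac{2|T|\lambda^2}{\nu_T^2}$ nor $\frac{4\sigma^{*2}(|T|+2\log(1/\delta))}{n}$ controls in general), and it also doubles the constant in front of $\rho_T^2\|\betaz\|_1^2$. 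So the very step you flag as ``most delicate'' in fact does not close.

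The paper sidesteps this entirely by a two-case split rather than trying to prove the $\ell_1$ bound unconditionally. If $\|\betae\|_1\le \frac{\gamma+1}{\gamma-1}\|\betaz\|_1$, then $\|\bdelta\|_1\le \frac{2\gamma}{\gamma-1}\|\betaz\|_1$ trivially and your decomposition argument goes through verbatim with the stated constants. If instead $\|\betae\|_1> \frac{\gamma+1}{\gamma-1}\|\betaz\|_1$, one plugs $\bar\bdelta=\bdelta$ into~\eqref{2.8}--\eqref{2.9} (i.e.\ compares $\betae$ directly to $\betaz$, handling the $\Pi_T\bxi$ piece of the noise by Cauchy--Schwarz and absorbing it into $\frac1n\|\bfX\bdelta\|_2^2$); the resulting inequality has the term $2\lambda\gamma^{-1}\big((\gamma+1)\|\betaz\|_1-(\gamma-1)\|\betae\|_1\big)$, which is negative in this case, so one gets the even stronger bound $\ell_n(\betae,\betaz)\le \frac{2\sigma^{*2}(|T|+2\log(1/\delta))}{n}$. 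No $\betazT$ is needed, and no transfer problem arises.
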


As before, Proposition~\ref{prop:4.1} shows that the correlations can be exploited adapting the tuning parameter to the design via the measure~$\rho_T$, but, in strong contrast to the above results, Proposition~\ref{prop:4.1} provides fast rate bounds for strongly correlated covariates even for standard, non-adapted tuning parameters of the order $\sqrt{\log(p)/n}$. For example, if $\rho_{J^*}\asymp n^{-1/2}$ and $|J^*|\asymp 1$, fast rates can be deduced from Proposition~\ref{prop:4.1} even with universal tuning parameters of order $\sqrt{\log(p)/n}$; in strong contrast,  considerably smaller tuning parameters of order ${\sqrt{\log(p)}}/{n}$ are required to deduce fast rates from Theorem~\ref{thm:4} for this example. Note, however, that Proposition~\ref{prop:4.1} does not supersede Theorem~\ref{thm:4} in general: for moderate correlations, the first term of the bound in Proposition~\ref{prop:4.1} is large, and Theorem~\ref{thm:4} is then considerably more beneficial.

Theorem~\ref{thm:4} provides, in particular, minimax rates for signal denoising with total variation penalties. In the previous section, we have studied the TV-estimator for piecewise constant signals. In the remainder of this section, we study the TV-estimator for monotone signals (or slightly more general, signals with bounded variation) and for H\"older continuous signals. A recent review on this topic and a detailed analysis of the maximum likelihood estimator in this context can be found in~\cite{Aditya} and an earlier risk bound can be found in~\cite{Zhang02}.

\begin{example}[Predicting monotone functions with the TV-estimator]


In this example, we derive an almost minimax risk bound that is particularly interesting for signals with bounded variation. For this, we apply Theorem~\ref{thm:4} exploiting that the TV-estimator can be considered as a special case of the Lasso.  As mentioned earlier, the TV-estimator~\eqref{eq:TV} corresponds to the Lasso~\eqref{lasso} with the design matrix $\bfX\in\mr^{n\times n}$ with entries $x_{ij}=\mathds 1(i\ge j)$. To transfer the results for the Lasso to the TV-estimator, we assume that $n \geq 3$, fix a positive integer $h\in [n-1]$, denote by $k\geq 2$ the largest integer such that $(k-1)h<n$, and finally set $T=\{1,h+1,2h+1,3h+1,\ldots,(k-1)h+1\}$. The set $T$ induces the partition
\footnote{Here and in the sequel, we use the notation $\llbracket a,b\llbracket:=[a,b[\cap \mn$.}$\{\llbracket 1,h+1\llbracket,\llbracket h+1,2h+1\llbracket,\ldots,\llbracket (k-1)h+1,n+1\llbracket\}$ of $[n]$ with at most $h$ points in the last interval and exactly $h$ points in all other intervals. Moreover, $\Pi_T$ is the orthogonal projection onto the subspace spanned by the vectors that are constant on each of the elements of this partition. This implies $\rho_T=n^{-1/2}\max_j\|(\bfI_n-\Pi_T)\bx^j\|_2 =\max_{j\in[h]} \sqrt{{(j-1)(h-j+1)}/(nh)}\le \sqrt{{h}/(4n)}$. Using $h\leq 2n/k$, we then obtain $\rho_T\leq 1/\sqrt{2k}$ so that we can deduce  from Theorem~\ref{thm:4} the following risk bound for the TV-estimator.

\begin{proposition} \label{prop:4.2}
Assume that we observe the random vector $\by = \boldf^*+\bxi$, where $\boldf^*\in\mr^n$ is the fixed but unknown vector of interest obscured by Gaussian noise $\bxi\sim \sigma^* \mathcal N(0,\bfI_n)$. Let $\delta>0$ be a constant and $k$ be the smallest integer larger than $\left(\|\boldf^\uparrow\|_{\rm TV}^2n\log(n/\delta)/\sigma^*{}^2\right)^{1/3}$, where $\boldf^\uparrow$ is the orthogonal projection of $\boldf^*$ on the convex polyhedral cone of vectors with nondecreasing entries. Then, for the tuning parameter $\lambda = \sigma^* \sqrt{\log(n/\delta)/({kn})}$, the TV-estimator (\ref{eq:TV}) fulfills
\begin{align*}
\frac1n\|\hat\boldf{}^{\rm TV}-\boldf^*\|_2^2 \le
\frac1n\|\boldf^{\uparrow}-\boldf^*\|_2^2 +
\frac{2\sigma^*{}^2(1+2\log(1/\delta))}{n}+6\left(\frac{\sigma^*{}^{4}\|\boldf^\uparrow\|^2_{\rm TV}\log(n/\delta)}{n^{2}}\right)^{1/3}
\end{align*}
with probability at least $1-2\delta$.
\end{proposition}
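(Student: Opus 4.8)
The plan is to recognize the TV-estimator as a Lasso, apply Theorem~\ref{thm:4} with the partition-induced set $T$ constructed just above the statement, and then optimize the resulting bound over the single free parameter $k$.

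I would first use the equivalence recalled in the preceding example: writing $\bbeta$ for the difference vector $\beta_j=f_j-f_{j-1}$ (with $f_0=0$), the TV-estimator equals $\bfX\betaL$ for the lower-triangular design $\bfX=(\mathds 1(i\ge j))_{i,j}$ with $p=n$, so that $\ell_n(\betaL,\betaz)=\frac1n\|\hat\boldf{}^{\rm TV}-\boldf^*\|_2^2$ and $\|\bbeta\|_1=\|\boldf\|_{\rm TV}$ for every vector $\boldf$ and its difference vector. I take $T=\{1,h+1,\ldots,(k-1)h+1\}$ as in the set-up, so that $|T|=k$ and, as shown there, $\rho_T\le 1/\sqrt{2k}$.

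Next I check that the prescribed $\lambda$ is admissible for Theorem~\ref{thm:4} with $\gamma=1$ and $p=n$: the hypothesis asks for $\lambda\ge\sigma^*\rho_T\sqrt{2\log(n/\delta)/n}$, and since $\rho_T\le 1/\sqrt{2k}$ this right-hand side is at most $\sigma^*\sqrt{\log(n/\delta)/(kn)}$, which is exactly $\lambda$. Applying Theorem~\ref{thm:4} with $\gamma=1$, the nonnegative term $\frac{2(\gamma-1)\lambda}{\gamma}\|\betae\|_1$ vanishes on the left, and I choose the competitor in the infimum to be the difference vector of $\boldf^\uparrow$ (admissible since the infimum runs over all of $\mr^n$). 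This gives $\ell_n(\barbbeta,\betaz)=\frac1n\|\boldf^\uparrow-\boldf^*\|_2^2$ and $\|\barbbeta\|_1=\|\boldf^\uparrow\|_{\rm TV}$, so that on an event of probability at least $1-2\delta$,
\[
\tfrac1n\|\hat\boldf{}^{\rm TV}-\boldf^*\|_2^2\le \tfrac1n\|\boldf^\uparrow-\boldf^*\|_2^2+4\lambda\|\boldf^\uparrow\|_{\rm TV}+\frac{2\sigma^*{}^2(k+2\log(1/\delta))}{n}.
\]

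Finally I optimize over $k$. After substituting $\lambda$, the $k$-dependent part of the bound is the convex function $g(k)=4\sigma^*\sqrt{\log(n/\delta)/n}\,\|\boldf^\uparrow\|_{\rm TV}\,k^{-1/2}+\frac{2\sigma^*{}^2}{n}\,k$, whose minimizer over $(0,\infty)$ is $k^*=(\|\boldf^\uparrow\|_{\rm TV}^2 n\log(n/\delta)/\sigma^*{}^2)^{1/3}$ with minimal value $g(k^*)=6(\sigma^*{}^4\|\boldf^\uparrow\|_{\rm TV}^2\log(n/\delta)/n^2)^{1/3}$, which is exactly the third term of the claimed bound. Since $k$ is the smallest integer exceeding $k^*$, we have $k^*<k\le k^*+1$; the square-root term only decreases while the linear term grows by at most $2\sigma^*{}^2/n$, so $g(k)\le g(k^*)+2\sigma^*{}^2/n$. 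Combining this leftover $2\sigma^*{}^2/n$ with the $4\sigma^*{}^2\log(1/\delta)/n$ coming from the $2\log(1/\delta)$ summand yields $\frac{2\sigma^*{}^2(1+2\log(1/\delta))}{n}$, which gives the stated inequality.

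The only genuinely nonroutine point is the bookkeeping in this last step: confirming that rounding $k^*$ up costs exactly one additive $2\sigma^*{}^2/n$ and that the elementary one-variable minimization reproduces the precise constant $6$. A secondary subtlety to verify is that the partition parameter $h$ can be chosen so that the construction yields $|T|=k$ for this optimal $k$ and that $k\ge 2$; both hold for $n$ large since $k^*\asymp n^{1/3}\ll\sqrt n$ for signals of bounded variation.
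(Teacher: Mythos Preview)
Your proof is correct and follows essentially the same approach as the paper: both apply Theorem~\ref{thm:4} with $\gamma=1$ (yielding the coefficient $4\lambda$) to the partition-induced set $T$ of size $k$, specialize the infimum to $\boldf^\uparrow$, and then optimize the resulting function $\Psi(k)=4\sigma^*\sqrt{\log(n/\delta)/(nk)}\,\|\boldf^\uparrow\|_{\rm TV}+2\sigma^{*2}k/n$, absorbing the rounding error $2\sigma^{*2}/n$ into the constant term. The only addition you make is to flag the mild technical point that the chosen $k$ must actually arise from some admissible $h$, which the paper's proof leaves implicit.
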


Proposition~\ref{prop:4.2} has four crucial features. First, it provides nearly minimax rates for the TV-estimator: The dominating term is typically the last term, which is minimax up to the logarithmic factor~\cite[Eq. (1.4)]{Zhang02}. We conjecture that such logarithmic factors are always required in bounds that hold with high probability (note that the bounds in~\cite{Zhang02}, in contrast,  are in expectation). Second, the unknown quantities in the tuning parameter of Proposition~\ref{prop:4.2} can be avoided or readily estimated: The noise variance $\sigma^*$ can most likely be avoided using scaled versions of the Lasso~\cite{Belloni13,SunZhang}; the factor $\|\boldf^\uparrow\|_{\rm TV}$ measuring the total variation of the function~$\boldf^\uparrow$  can be roughly estimated\footnote{One may also expect that choosing $\lambda$ by cross validation or by minimizing an
unbiased estimator of the risk would lead to bounds similar to that of Proposition~\ref{prop:4.2}. However, there are no theoretical result corroborating this claim.} by $\max_{i,j} (y_i-y_j)$. Third, Proposition~\ref{prop:4.2} allows for model mis-specifications with respect to monotone functions. Finally, it is nonasymptotic holding for any sample size $n\geq 3$.

\end{example}

\begin{example}[TV-penalty for H\"older continuous functions]
In this example, we derive an almost minimax risk bound that is particularly interesting for H\"older continuous signals.  For this, we again apply Theorem~\ref{thm:4} exploiting that the TV-estimator can be considered as a special case of the Lasso. It is known that the least squares estimator with an $\ell_0$-norm penalty of the increments can achieve the minimax rate $n^{-2\alpha/(2\alpha+1)}$ up to logarithmic factors over the set of H\"older continuous functions
$\{f:[0,1]\to \mr: |f(x)-f(x')|\le L|x-x'|^\alpha\}$ with parameters $\alpha\in(0,1]$ and $L>0$~\cite{Munk}. In contrast, the best known rate for the TV-estimator over these sets is $n^{-2\alpha/3}$ and therefore clearly suboptimal~\cite{MammenvdG}. Using Theorem~\ref{thm:4}, we can improve on this bound and demonstrate  that the TV-estimator can also achieve the minimax rate  $n^{-2\alpha/(2\alpha+1)}$ up to logarithmic factors over these sets  if the tuning parameter is appropriately chosen.

\begin{proposition}\label{prop:4.3} Assume that we observe the random vector $\by = \boldf^*+\bxi$, where $\boldf^*\in\mr^n$ is the fixed but unknown vector of interest obscured by Gaussian noise $\bxi\sim \sigma^* \mathcal N(0,\bfI_n)$. Let $\delta,L>0$ and $\alpha\in(0,1]$ be constants and let
$k$ be the smallest integer larger than $\big(L^2n/(\sigma^*{}^2\log(n/\delta))\big)^{1/(2\alpha+1)}$. Moreover, let
$\mathcal H^n_{\alpha,L}=\{\boldf\in\mr^n:|f_i-f_j|\le Ln^{-\alpha}|i-j|^\alpha~\forall i,j\in[n]\}$ be the H\"older class with parameters $\alpha$ and $L$. Then, for the tuning parameter $\lambda = \sigma^* \sqrt{\log(n/\delta)/(kn)}$, the TV-estimator (\ref{eq:TV}) fulfills
\begin{align*}
\frac1n\|\hat\boldf{}^{\rm TV}-\boldf^*\|_2^2
&\le \inf_{\boldf\in\mathcal H^n_{\alpha,L}}\left\{
\frac1n\|\boldf-\boldf^*\|_2^2\right\}  + \frac{8\sigma^*{}^2\log(n/\delta)}{n}
+16L^2\Big(\frac{\sigma^*{}^2\log(n/\delta)}{nL^2}\Big)^{2\alpha/(2\alpha+1)}
\end{align*}
with probability at least $1-2\delta$.
\end{proposition}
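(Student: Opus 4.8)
The plan is to deduce the bound from Theorem~\ref{thm:4}, exploiting exactly as in the monotone case that the TV-estimator is the Lasso for the design $\bfX=(\mathds 1(i\ge j))_{i,j}$ with $p=n$, so that $\ell_n(\betaL,\betaz)=\frac1n\|\hat\boldf{}^{\rm TV}-\boldf^*\|_2^2$. First I would fix the regular grid $T=\{1,h+1,\ldots,(k-1)h+1\}$ with the stated $k$ and the associated spacing $h$, for which the computation preceding Proposition~\ref{prop:4.2} already gives $|T|=k$ and $\rho_T\le\sqrt{h/(4n)}\le 1/\sqrt{2k}$. With this bound, the prescribed $\lambda=\sigma^*\sqrt{\log(n/\delta)/(kn)}$ satisfies $\lambda\ge\sigma^*\rho_T\sqrt{2\log(n/\delta)/n}$, i.e.\ the hypothesis of Theorem~\ref{thm:4} holds with $\gamma=1$ (and $p=n$). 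Applying that theorem and discarding the nonnegative term $\tfrac{2(\gamma-1)\lambda}{\gamma}\|\betae\|_1=0$ gives, on an event of probability at least $1-2\delta$,
\[
\frac1n\|\hat\boldf{}^{\rm TV}-\boldf^*\|_2^2\le\inf_{\bar\boldf\in\mr^n}\Big\{\tfrac1n\|\bar\boldf-\boldf^*\|_2^2+4\lambda\|\bar\boldf\|_{\rm TV}\Big\}+\frac{2\sigma^*{}^2(k+2\log(1/\delta))}{n}.
\]

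Next I would bound the infimum with a single coarse competitor. Let $\boldf^0$ be the $\ell_2$-projection of $\boldf^*$ onto the convex set $\mathcal H^n_{\alpha,L}$, so that $\frac1n\|\boldf^0-\boldf^*\|_2^2=\inf_{\boldf\in\mathcal H^n_{\alpha,L}}\frac1n\|\boldf-\boldf^*\|_2^2$, and take $\bar\boldf=\Pi_T\boldf^0$, the block-averaged (piecewise constant) version of $\boldf^0$ on the partition induced by $T$. Two approximation estimates, both consequences of $|f^0_i-f^0_j|\le Ln^{-\alpha}|i-j|^\alpha$ together with $h\le 2n/k$, drive the proof: the vector $\Pi_T\boldf^0$ has at most $k$ jumps, each of size $\lesssim L(h/n)^\alpha\lesssim Lk^{-\alpha}$, whence $\|\Pi_T\boldf^0\|_{\rm TV}\lesssim Lk^{1-\alpha}$; and the block-averaging error is controlled by the within-block oscillation, $\frac1n\|\Pi_T\boldf^0-\boldf^0\|_2^2\lesssim L^2(h/n)^{2\alpha}\lesssim L^2k^{-2\alpha}$. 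The data-fit term $\frac1n\|\Pi_T\boldf^0-\boldf^*\|_2^2$ is then related to $\inf_{\boldf\in\mathcal H^n_{\alpha,L}}\frac1n\|\boldf-\boldf^*\|_2^2$; in the well-specified (minimax) reading $\boldf^*\in\mathcal H^n_{\alpha,L}$ one has $\boldf^0=\boldf^*$ and this term is simply the block bias $\frac1n\|(\bfI_n-\Pi_T)\boldf^*\|_2^2\lesssim L^2k^{-2\alpha}$.

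Substituting the three bounds, the right-hand side reduces to the oracle term plus a remainder built from the coarsening bias $\lesssim L^2k^{-2\alpha}$, the penalty $4\lambda\|\Pi_T\boldf^0\|_{\rm TV}\lesssim\sigma^*\sqrt{\log(n/\delta)/(kn)}\,Lk^{1-\alpha}$, and the stochastic term $\frac{2\sigma^*{}^2k}{n}$. A direct computation shows that, for the balancing choice $k\asymp(L^2n/(\sigma^*{}^2\log(n/\delta)))^{1/(2\alpha+1)}$ prescribed in the statement, each of these contributions is of order $L^2(\sigma^*{}^2\log(n/\delta)/(nL^2))^{2\alpha/(2\alpha+1)}$, which is exactly the last term of the asserted bound, while the residual $\frac{4\sigma^*{}^2\log(1/\delta)}{n}$ and the lower-order logarithmic pieces are absorbed into $\frac{8\sigma^*{}^2\log(n/\delta)}{n}$. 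Tracking the numerical constants then yields the claimed inequality on the same event.

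The hard part will be the second step, and specifically reconciling the necessary coarsening with the leading constant one on $\inf_{\boldf\in\mathcal H^n_{\alpha,L}}\frac1n\|\boldf-\boldf^*\|_2^2$. The tension is structural: a small total variation forces $\bar\boldf$ to be (nearly) piecewise constant at resolution $k$, whereas a data-fit term with constant one wants $\bar\boldf$ to follow the oscillations of $\boldf^0$ within each block, and the two requirements clash through a cross term $\lesssim\sqrt{\inf_{\mathcal H}}\cdot Lk^{-\alpha}$ that a naive triangle inequality would leave behind. Isolating this cross term through the orthogonal splitting $\|\Pi_T\boldf^0-\boldf^*\|_2^2=\|\Pi_T(\boldf^0-\boldf^*)\|_2^2+\|(\bfI_n-\Pi_T)\boldf^*\|_2^2$ and the contraction $\|\Pi_T(\boldf^0-\boldf^*)\|_2\le\|\boldf^0-\boldf^*\|_2$ is what keeps the oracle constant at one; the remaining verification of the two Hölder approximation estimates with explicit constants and the control of the lower-order boundary contribution $|f^0_1|$ to $\|\bar\boldf\|_{\rm TV}$ are routine but must be done carefully to obtain the clean constants $8$ and $16$.
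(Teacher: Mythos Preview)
Your overall architecture---apply Theorem~\ref{thm:4} with the regular grid $T$, verify $\rho_T\le 1/\sqrt{2k}$ so that $\lambda=\sigma^*\sqrt{\log(n/\delta)/(kn)}$ meets the hypothesis with $\gamma=1$, plug in a coarsened competitor, and balance the three remainder contributions at $k\asymp(L^2n/(\sigma^{*2}\log(n/\delta)))^{1/(2\alpha+1)}$---matches the paper. The difference, and the gap, lies in the choice of competitor and the handling of the leading constant one in the misspecified case.

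You take $\bar\boldf=\Pi_T\boldf^0$, the piecewise \emph{constant} block average of the H\"older projection $\boldf^0$, and then invoke the orthogonal splitting
\[
\|\Pi_T\boldf^0-\boldf^*\|_2^2=\|\Pi_T(\boldf^0-\boldf^*)\|_2^2+\|(\bfI_n-\Pi_T)\boldf^*\|_2^2.
\]
The first summand is indeed at most $\|\boldf^0-\boldf^*\|_2^2$, but the second is the block-averaging error of $\boldf^*$, not of $\boldf^0$. When $\boldf^*\notin\mathcal H^n_{\alpha,L}$ this term is \emph{not} of order $nL^2k^{-2\alpha}$: nothing prevents $\boldf^*$ from oscillating arbitrarily within blocks, and writing $(\bfI_n-\Pi_T)\boldf^*=(\bfI_n-\Pi_T)(\boldf^*-\boldf^0)+(\bfI_n-\Pi_T)\boldf^0$ reintroduces $\|\boldf^*-\boldf^0\|_2^2$ with a positive coefficient, so the leading constant one is lost. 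Thus the orthogonal splitting you propose does not resolve the tension you yourself flagged as the hard part.

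The paper evades this by choosing $\bar\boldf$ to be the piecewise \emph{linear} interpolant of the H\"older projection $\boldg^*$ (your $\boldf^0$) at the gridpoints $a_1,\ldots,a_k$. The crucial gain is that this interpolant remains in $\mathcal H^n_{\alpha,L}$, so the variational characterization of $\boldg^*$ as the projection of $\boldf^*$ onto the closed convex set $\mathcal H^n_{\alpha,L}$ can be applied directly to the pair $(\bar\boldf,\boldg^*)$; the paper uses this to decouple $\|\bar\boldf-\boldf^*\|_2^2$ into $\|\boldg^*-\boldf^*\|_2^2+\|\bar\boldf-\boldg^*\|_2^2$, where now both vectors in the second summand lie in $\mathcal H^n_{\alpha,L}$ and the H\"older condition gives $\|\bar\boldf-\boldg^*\|_2^2\le 4nL^2k^{-2\alpha}$. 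Piecewise linearity also makes the increments telescope on each block, yielding $\|\bar\boldf\|_{\rm TV}\le 2Lk^{1-\alpha}$. After inserting these two estimates, an AM--GM step absorbs the cross term $8L\sigma^*\sqrt{\log(n/\delta)/(kn)}\,k^{1-\alpha}$ into $8L^2k^{-2\alpha}+2\sigma^{*2}k\log(n/\delta)/n$, and substituting the chosen $k$ gives the stated constants.
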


Proposition~\ref{prop:4.3} for the TV-estimator and the risk bounds in~\cite{Munk} for the $\ell_0$-penalized least-squares estimator provide exactly the same, nearly minimax rates $(n/\log(n))^{-2\alpha/(2\alpha+1)}$. The results differ, however, in other important aspects. Benefits of Proposition~\ref{prop:4.3}, on the one hand, are its finite sample bounds and the inclusion of model mis-specifications; the risk bounds in~\cite{Munk}, in contrast, are purely asymptotic and do not take model mis-specifications into account. A deficiency of Proposition~\ref{prop:4.3}, on the other hand, is the dependence of the tuning parameter on the constants $\alpha$ and $L$.
\end{example}

\section{Discussion}\label{sec:discussion}
\subsection{Conditions of Belloni, Chernozhukov, and Wang \cite{Belloni13}}

Using an intelligent trick, the authors of \cite{Belloni13} managed to replace $\kappa_{T,\bar c}$ by $\kappa_{T,1}$
by means of introducing a new constant $\varrho_{T,\gamma}$ which is the $1-\delta$ quantile of the following stochastic term
$\max_{\bu\in\mathcal C_0(T,\gamma)}|(\bxi/\sigma^*)^\top\bfX\bu|/\|\bfX\bu\|_2$. The fact of being able to
replace $\bar c$ by $1$ leads to a qualitative enlargement of the set of matrices satisfying the condition $\kappa_{T,\bar c}>0$.
In fact, as proved in  \cite{Belloni13}, while $\kappa_{T,1}$  is invariant by including identical columns in $\bfX$,
$\kappa_{T,\bar c}$ with any $\bar c>1$ vanishes if we copy a column of $\bfX_T$ in $\bfX_{T^c}$ or vice-versa.
Note that this property of invariance by copying covariates from $T$ to $T^c$ and, reciprocally, from $T^c$ to $T$
holds true for the weighted compatibility factors defined in Section~\ref{sec:fast} as well.

Combining our approach with the idea of \cite{Belloni13}, it is possible to replace  $\bar\kappa_{T, \gamma, \bomega}^{-1}$
in (\ref{eq:3.1}) by $\frac1{|T|}\bar\varrho_{T,\gamma,\bomega}^2+\kappa_{T, 1}^{-1}$, where $\bar\varrho_{T,\gamma,\bomega}$ is
the $(1-\delta)$-quantile of the random variable $\max_{\bu\in\mathcal C_0(T,\gamma,\bomega)}|\bxi^\top(\bfI_n-\Pi_T)\bfX_{T^c}\bu_{T^c}|/\|\bfX\bu\|_2$. At first sight, this upper bound is tighter
than the one of Theorem~\ref{thm3}, but it is less interpretable because of the presence of $\bar\varrho_{T,\gamma,\bomega}$.
However, as we prove below in Proposition \ref{prop:3}, quantities like $\bar\varrho_{T,\gamma,\bomega}$ do not really lead to a
substantially smaller risk bound than the one expressed in terms of compatibility factors. To complete the
comparison of our results with those in \cite{Belloni13}, let us simply remark that since our model is simpler than the one
considered in  \cite{Belloni13}, the results we get are sharper. Indeed, we get a leading constant one in the oracle inequality,
while the proof technique of \cite{Belloni13} would produce a leading constant strictly larger than one in the mis-specified
case.

\begin{proposition}\label{prop:3}
Let $\delta\in(0,1/2)$ and $\varrho_{T,\bar c}$ be the $1-\delta$ quantile of the random variable
$\bar\eta=\sup_{\bu\in\mr^p:\|\bu_{T^c}\|_1\le \bar c\|\bu_T\|_1}
\frac{|\bxi^\top\bfX\bu|}{\sigma^*\|\bfX\bu\|_2}$.
For every $ 10\le |J|\le n$ we have
\begin{equation}
\varrho_{T,\bar c} \ge \frac{\lambda_{\min,J}|T|^{1/2}}{16\kappa_{T,\bar c}^{1/2}}\bigwedge \frac{|J|^{1/2}}{4},
\end{equation}
where $\lambda_{\min, J}$ is the smallest singular value of the matrix $\frac1{\sqrt{n}}\bfX_J$.
\end{proposition}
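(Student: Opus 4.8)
The plan is to produce a lower bound on the quantile by showing that $\bar\eta$ is large with probability exceeding $\delta$. Since $\delta<1/2$, it is enough to exhibit a value $v$ with $\mpr(\bar\eta\ge v)\ge 1/2$, because then $v$ cannot exceed the $1-\delta$ quantile and hence $\varrho_{T,\bar c}\ge v$. Write $\bg=\bxi/\sigma^*\sim\mathcal N_n(0,\bfI_n)$ and observe that the feasible set $\mathcal C=\{\bu\in\mr^p:\|\bu_{T^c}\|_1\le\bar c\|\bu_T\|_1\}$ is symmetric, so that $\bar\eta=\sup_{\bv\in S}\langle\bg,\bv\rangle$, where $S=\{\bfX\bu/\|\bfX\bu\|_2:\bu\in\mathcal C,\ \bfX\bu\neq 0\}$ is a subset of the unit sphere of $\mr^n$. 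In this form $\bar\eta$ is a supremum of unit-norm linear functionals of $\bg$, hence a $1$-Lipschitz function of $\bg$; Gaussian concentration then gives $\mpr(\bar\eta\ge\mathrm{med}(\bar\eta))\ge 1/2$ together with $\mathrm{med}(\bar\eta)\ge\me[\bar\eta]-C_0$ for a universal constant $C_0$. Thus the whole problem reduces to bounding from below the Gaussian width $\me[\bar\eta]=\me\sup_{\bv\in S}\langle\bg,\bv\rangle$, the additive constant $C_0$ being absorbed by the hypothesis $|J|\ge 10$.

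To lower bound this width I would use Sudakov minoration, $\me[\bar\eta]\ge c\,\sup_{\epsilon>0}\epsilon\sqrt{\log\mathcal M(S,\epsilon)}$, where $\mathcal M(S,\epsilon)$ is the $\epsilon$-packing number of $S$ in Euclidean distance, and construct the packing from the columns indexed by $J$. The starting point is a Varshamov--Gilbert packing of the hypercube $\{\pm1\}^{|J|}$, which yields $e^{c|J|}$ sign patterns that are pairwise Hamming-far, hence vectors supported on $J$ whose mutual $\ell_2$ distances are of order $\sqrt{|J|}$. Mapping these (after a correction that keeps them inside the cone $\mathcal C$) through $\bfX_J$ and normalizing to $S$ produces the candidate packing. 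Here the two quantities of the statement enter geometrically: the smallest singular value $\lambda_{\min,J}$ of $\tfrac1{\sqrt n}\bfX_J$ controls from below the Euclidean separation of the images $\bfX_J(\,\cdot\,)$, while the compatibility factor $\kappa_{T,\bar c}$ enters through the normalization, since it is exactly the constant that lower bounds $\|\bfX\bu\|_2$ in terms of the $\ell_1$ cone functional $\|\bu_T\|_1-\bar c^{-1}\|\bu_{T^c}\|_1$ and therefore fixes the scale at which the packing sits on the unit sphere.

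The minimum of the two terms should then emerge from the scale-versus-cardinality trade-off in the minoration. The number of usable, well-conditioned directions is capped by $\log\mathcal M\lesssim|J|$, giving the cardinality-limited contribution $\sqrt{|J|}/4$; on the other hand, once the cone constraint forces the admissible perturbations to a coarser Euclidean scale, the packing saturates and the width is instead governed by the scale-limited contribution $\lambda_{\min,J}\,|T|^{1/2}/(16\,\kappa_{T,\bar c}^{1/2})$, in which $|T|/\kappa_{T,\bar c}$ is precisely the $\ell_1$-to-$\ell_2$ distortion dictated by compatibility and $\lambda_{\min,J}$ is the conditioning of the chosen sub-dictionary. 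Whichever of the two regimes is active bounds the product $\epsilon\sqrt{\log\mathcal M(S,\epsilon)}$, so the supremum over $\epsilon$ is bounded below by the minimum of the two quantities, matching the claim.

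The main obstacle I anticipate is the packing construction itself: one must exhibit exponentially many normalized images of cone vectors built on $J$ that (i) genuinely belong to the cone $\mathcal C$ after the sign patterns are corrected, and (ii) remain $\epsilon$-separated \emph{after} normalization to the sphere, pinning the separation to $\lambda_{\min,J}$ and the scale to $\kappa_{T,\bar c}$ in a way that does not let the two effects cancel. The Gaussian-process steps (Lipschitz concentration, mean--median comparison, Sudakov minoration) are standard; the delicate part is the quantitative geometry of the cone packing and the tracking of absolute constants, in particular checking that $|J|\ge 10$ is enough to dominate both the concentration gap $C_0$ and the Sudakov constant and to deliver the stated numerical factors $16$ and $4$.
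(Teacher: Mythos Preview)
Your route via Gaussian width and Sudakov minoration is genuinely different from the paper's, and plausible in outline, but there is a structural gap that goes beyond the packing construction you already flag. The mean--median comparison $\mathrm{med}(\bar\eta)\ge\me[\bar\eta]-C_0$ costs an \emph{additive} constant, and this cannot be ``absorbed by $|J|\ge10$'' uniformly over the regime the proposition covers. The target bound is a minimum, and when the compatibility-limited term $\lambda_{\min,J}|T|^{1/2}/(16\kappa_{T,\bar c}^{1/2})$ is tiny you need $\me[\bar\eta]\ge C_0+\text{(tiny)}\approx C_0$. But your packing consists of perturbations of a fixed $\bu^*\in\mathcal C$ by sign patterns on $J\subset T^c$; the cone constraint caps the $\ell_1$ size of any such perturbation at the slack $\bar c\|\bu^*_T\|_1-\|\bu^*_{T^c}\|_1$, and after normalization to the sphere the mutual separation of the packing points is of order $\lambda_{\min,J}\cdot(\text{slack})/\|\bfX\bu^*\|_2$, which at the optimizing $\bu^*$ is precisely $\lambda_{\min,J}|T|^{1/2}/\kappa_{T,\bar c}^{1/2}$. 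When this is small, Sudakov returns a small width and $C_0$ swallows it. A single direction in $S$ gives only $\me[\bar\eta]\ge\sqrt{2/\pi}\approx0.80$, which does not beat $C_0=\sqrt{2\log 2}\approx1.18$, so there is no cheap rescue; the additive loss is not a bookkeeping nuisance but an obstruction in the small-$A$ regime.

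The paper sidesteps this entirely by never passing through the expectation. It fixes an arbitrary $\bu^*$ in the cone, sets $\bzeta=\sigma^{*-1}(\bfX_J^\top\bfX_J)^\dag\bfX_J^\top\bxi$ and $\bv_J=\alpha\bzeta$ with $\alpha=(\bar c\|\bu^*_T\|_1-\|\bu^*_{T^c}\|_1)/\|\bzeta\|_1$, so that both $\pm\bu^*+\bv$ remain in the cone. The key algebraic observation is $\bxi^\top\bfX\bv=(\alpha/\sigma^*)\|\Pi_J\bxi\|_2^2\ge0$, so maximizing over the sign of $\bu^*$ decouples the deterministic and random pieces and gives
\[
\bar\eta\ \ge\ \eta_1:=\frac{\alpha\,\|\Pi_J(\bxi/\sigma^*)\|_2^2}{\|\bfX\bu^*\|_2+\alpha\,\|\Pi_J(\bxi/\sigma^*)\|_2}.
\]
Two elementary $\chi^2$ tail bounds---a lower bound on $\|\Pi_J\bxi\|_2^2$ and an upper bound on $\|\bzeta\|_1$ through $\lambda_{\min,J}$, each holding with probability at least $3/4$---yield $\mpr(\eta_1\ge v)\ge1/2$ \emph{directly}, with $v^{-1}=8\lambda_{\min,J}^{-1}(\kappa_{T,\bar c}/|T|)^{1/2}+2|J|^{-1/2}$ after optimizing over $\bu^*$; the stated minimum then follows from $1/(a+b)\ge\tfrac12\min(a^{-1},b^{-1})$. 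This pointwise-in-$\bxi$ construction is what buys the explicit constants $16$ and $4$, and it makes the role of $|J|\ge10$ transparent: it is needed only so that the two $\chi^2$ events intersect with probability at least $1/2$.
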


In many concrete examples of design matrices $\bfX$, there is a set $J\subset T^c$ of cardinality of the same order as $n$ such that
$\bfX_J$ is of full rank. For such matrices,  $\lambda_{\min, J}$ is a constant and the proposition tells us that $|T|^{-1}\varrho_{T,\bar c}^2$
is of the order of $\kappa_{T,\bar c}^{-1}\wedge (n/|T|)$. As we already mentioned, the risk bound in \cite{Belloni13} is proportional to
$|T|^{-1}\varrho_{T,\bar c}^2+\kappa_{T,1}^{-1}\asymp \kappa_{T,\bar c}^{-1}$. Therefore, according to the result of the last proposition,
there is no significant gain in the rate of convergence nor in the severity of the assumptions imposed on $\bfX$ when using
$\varrho_{T,\bar c}$ instead of $\kappa_{T,\bar c}$.

\begin{remark}
The constant $\bar\varrho_{T,\bar c}$ slightly differs from the one used in \cite{Belloni13}, where the additional constraint
$\|\betaz+\bu\|_1\le \bar c\|\betaz\|_1$ is included in the definition of $\mathcal C_0(T,\bar c)$.
On the one hand, more generally, one can define the set $\mathcal C_0(T,\bar c,\bbeta)=\{\bu\in\mr^p:\|\bbeta+\bu\|_1- \|\bbeta_T\|_1<(\bar c-1)(\bar c+1)^{-1}\|\bu\|_1\}$, contained in the one used in \cite{Belloni13}, and let
$\varrho_{T,\bar c,\bbeta}$ be the $1-\delta$ quantile of
$$
\bar\eta=\sup_{\bu\in\mathcal C_0(T,\bar c,\bbeta)}
\frac{|\bxi^\top\bfX\bu|}{\sigma^*\|\bfX\bu\|_2}.
$$
Then the risk bound of \cite{Belloni13} holds true with $\bar\varrho_{T,\bar c}$ replaced by $\bar\varrho_{T,\bar c,\betaz}$. On the
other hand, if one looks for a characteristic independent of $\betaz$, then it is necessary to take a supremum over all $\betaz$ that
are zero outside $T$. This amounts to taking the supremum over $\cup_{\betaz:\|\betaz_{T^c}\|_1=0} \mathcal C_0(T,\bar c,\betaz)=\mathcal C_0(T,\bar c)$,
that is to considering the quantity $\bar\varrho_{T,\bar c}$ of Proposition~\ref{prop:3}.
Finally, one can repeat the arguments of the proof of Proposition~\ref{prop:3} to check that
for every $J\subset T^c$ satisfying $ 10\le |J|\le n$, we have
$\varrho_{T,\bar c,\betaz} \ge \frac{\lambda_{\min,J}|T|^{1/2}}{16\tilde\kappa_{T,\bar c,\betaz}^{1/2}}\wedge \frac{|J|^{1/2}}{4}$,
where $\tilde\kappa_{T,\bar c,\betaz}$  is the $\inf$ over all ${\bu\in\mathcal C(T,\bar c,\betaz)}$ of the
ratio $\frac1n{\|\bfX\bu\|_2^2}/{(\|\betaz_T\|_1+(\bar c-1)(\bar c+1)^{-1}\|\bu\|_1-\|\betaz+\bu\|_1)}$.
\end{remark}

\subsection{Relation to the previous work on the Lasso with correlated covariates}\label{ss:5.2}
Our bound (\ref{eq:4.2}) is close in spirit to the bound in \cite[Theorem 3.1]{HebiriLederer} which is a consequence of results in \cite{vdGeer11}.
Both bounds demonstrate that the Lasso can achieve fast rates for prediction even for highly correlated design matrices if the tuning parameter is chosen appropriately. 
In order to make the comparison with our results easier, let us state the main result of \cite{vdGeer11} using the notation of the present work.
In fact, Theorem 4.1 in \cite{vdGeer11} establishes that for any $\alpha\in [0,1)$, $\lambda>0$, and $\lambda_0>0$, on the event $\mathcal B_{\alpha,\lambda_0}=\{\sup_{\bbeta:\|\bbeta\|_1=1} 4|\bxi^\top\bfX\bbeta|/\|\bfX\bbeta\|_2^{1-\alpha}\le n^{(1+\alpha)/2}\lambda_0\}$, it holds that
\begin{align}\label{JohSara}
\ell_n(\betaL\!\!,\betaz)&\le 7\inf_{\bar\bbeta\in\mr^p,T\subset[p]} \bigg\{\ell_n(\bar\bbeta,\betaz)+\frac{8\lambda}{3}\|\bar\bbeta_{T^c}\|_1+
\frac{7}{6}\bigg(\frac{\lambda_0}{\lambda^\alpha}\bigg)^{\frac{2}{1-\alpha}}\!\!+\frac{224\lambda^2|T|}{\kappa_{T,6}}\bigg\},
\end{align}
where $\kappa_{T,6}$ is the compatibility constant (\ref{kappa1}). Furthermore, the authors of \cite{vdGeer11} provide sufficient conditions in terms of the entropy of the set
$\mathcal F=\{\bbeta\in\mr^p:\|\bfX\bbeta\|_2^2\le n; \|\bbeta\|_1\le 1\}$ ensuring that the probability of the event
$\mathcal B_{\alpha,\lambda_0}$ is close to one for some $\alpha\in(0,1)$ and $\lambda_0\asymp (\log(n)/n)^{1/2}$. The main
advantages of the results stated in the present work as compared to (\ref{JohSara}) are that (a) the risk bounds are with
leading constant one, (b) the quantity $\rho_T$ governing the choice of $\lambda$ and the rate of convergence of the
prediction risk is, in general, easier to compute than the entropy, and (c) the compatibility factor is replaced by the weighted
compatibility factor that is strictly positive in several important cases where the compatibility factor vanishes (\textit{e.g.},
for total variation penalization). Moreover (d), the benefits of our results do not necessarily rely on a high overall correlation: If, for example, the covariates can be clustered into (a reasonably small number of) sets of highly correlated covariates, one can find a small set $T$ such that the measure $\rho_T$ is small. In contrast, the entropy measure in~\cite{vdGeer11} is not necessarily small for this example, because it measures the symmetric convex hull of all variables, which is still a large set if the clusters are not highly correlated with each other (we expect, however, that the approach in~\cite{vdGeer11} can be refined in this respect). On the other hand, risk bound (\ref{JohSara}) may potentially offer more flexibility
due to the presence of the parameter $\alpha$.  Besides, it is very likely that the proof technique used in this work
allow for removing the factor $7$ in front of the $\inf$ at the right hand-side of (\ref{JohSara}).

There is another direction of research, explored in the recent paper \cite{BRGZ13} and in discussions \cite{Wegkamp13,Samworth},
that replaces the original design matrix by a new one with weaker correlations between the covariates. This is achieved by clustering
the columns of $\bfX$ and replacing the groups of strongly correlated covariates by one representer (CRL), or by gathering strongly
correlated covariates in disjoint groups for prediction with the group-Lasso procedure (CGL). While the theoretical results developed
in \cite{BRGZ13} demonstrate advantages with respect to those available for the Lasso,  the experimental results reported in Tables 2-5
show that the Lasso remains perfectly competitive with the new procedures CRL and CGL. The results of this work explain, at least
partially, these empirical results, in that we proved that the prediction loss of the standard Lasso is small even if the design matrix
contains strongly correlated columns.

Furthermore, we believe that the clustering strategy developed in \cite{BRGZ13} may be beneficial in conjunction with the Lasso, without
any modification. In fact, the result of the clustering can be used for tuning the penalty level $\lambda$. More precisely, let
$G_1,\ldots,G_M$ be the clusters we get, forming a partition of $[p]$. Theoretical results developed in previous sections suggest
to choose one representer per cluster by setting $j_m=\text{arg}\min_{i\in G_m} \max_{j\in G_m}\|(\bfI_n-\Pi_{i})\bx^j\|_2$ and
to define $T=\{j_1,\ldots,j_M\}$ together with $\lambda=2\sigma^*\rho_T \sqrt{2\log(p/\delta)/n}$. If the clusters are tight---the
vectors within each cluster are very close to one another---then $\rho_T$ will be small, since
$\rho_T=n^{-1/2}\max_j \|(\bfI_n-\Pi_T)\bx^j\|_2\le n^{-1/2}\max_m\max_{j\in G_m} \|(\bfI_n-\Pi_T)\bx^j\|_2\le
n^{-1/2}\max_m\max_{j\in G_m} \|(\bfI_n-\Pi_{j_m})\bx^j\|_2$. Note that once the clusters are found, the aforementioned
computation of $j_m$'s and of $\lambda$ is not time consuming. Another compelling alternative is to replace the clustering step by
sparse PCA. Indeed, our theoretical findings advocate for choosing as $T$ a subset of $[p]$ which is simultaneously of small
cardinality and  such that all $\bx^j$'s are close to the linear space spanned by $\{\bx^j:j\in T\}$. It is precisely the task of the
sparse PCA to find such a subset $T$ (cf.\ \cite{Birnbaum} and the references therein).

\section{Conclusions}\label{sec:conclusion}


Our results lead to a better understanding of the prediction performance of the Lasso, as they demonstrate that correlations are not necessarily obstructive but even helpful in some cases. This permits more accurate comparisons of the Lasso with its many competitors. Our results are based on the introduction\footnote{A very similar quantity appears also in the risk bound provided by Theorem 8.2 in \cite{Koltchinskii_book}, but the result therein does not suggest the incorporation of this quantity into the tuning parameter, and the established risk bounds are less
accurate than those of the present work.} of $\rho_T$, a simple measure of the correlations of the covariates. If this measure is incorporated in the choice of the tuning parameter, the Lasso prediction risk decays at a fast rate for a broad variety of settings including settings with strongly correlated covariates. To derive this, we did not invoke the usual assumptions such as restricted isometry, restricted eigenvalues, etc., but rather relied on the slow rate bounds that hold for arbitrary designs. Consequences of our results are then substantially improved risk bounds for the least squares estimator with total variation penalty.

We also introduce compatibility factors that are not only abstract concepts but  can be both evaluated numerically and bounded theoretically, see Example~\ref{prop:3.1}. We introduce in particular a new, weighted compatibility factor $\bar\kappa$, which---in contrast to its original version---may be bounded away from zero even for strongly correlated covariates. This allows us to apply the corresponding results to the least squares estimator with total variation penalty, for example, where the correlations between the covariates are up to $1 - (1/n)$.

Our results finally indicate that the prediction performance of the Lasso can not be characterized by only the maximal correlation between covariates: On the one hand, as described above, the Lasso can provide accurate prediction even if the covariates are highly correlated. On the other hand, as indicated by Example~\ref{example1}, the Lasso can perform poorly in prediction even for moderately correlated covariates.

Future research directions include developing our approach in the case of the group Lasso \cite{Yuan06,Lounici11}
in order to understand how to optimally
exploit the (correlation) structure of the Gram matrix for defining the groups. This problem is also interesting for applications
to the total-variation penalization as discussed in \cite{Vert}. Another relevant question is how the refinements proposed in the
present work may be adapted to scale invariant versions of the Lasso, such as the square-root Lasso \cite{Belloni13}, scaled Lasso
\cite{Stadler} or scaled Dantzig selector \cite{DalalyanC12}. We also believe that the geometry of the design, measured by the quantities
$\rho_T$, may lead to better recommendations for the tuning parameter in the transductive setting \cite{AlquierMo}.
Finally, we would like to explore the consequences of our results when applied to the nonparametric estimation of a regression function
$f$ by penalized least squares with a penalty proportional to the discrete counterpart of the $L_1$-norm of the
$k$th derivative of $f$. This problem has been studied in \cite{MammenvdG}, but we believe that the results of the
present work may lead to improved risk bounds.

\section{Proofs}\label{sec:proofs}

In this section, we gather the proofs of all the theorems  and propositions stated in previous sections. For ease of notation,  we write $\betae$ instead of $\betaL$ throughout these proofs.
In the sequel, we denote by $\sign(x)$ the sub-differential of the function
$x\mapsto |x|$, that is
$$
\sign(x) = \begin{cases}
\{1\},& x>0,\\
[-1,1],& x=0,\\
\{-1\}, & x<0.
\end{cases}
$$

\begin{proof}[Proof of Theorem~\ref{thm1}]
We first use the Karush-Kuhn-Tucker conditions to infer that
\begin{align*}
\frac{1}{n}\bfX^\top(\by-\bfX\betae) \in
\lambda\sign(\betae).
\end{align*}
This implies that for every vector $\barbbeta\in\mr^p$
\begin{align*}
\frac{1}{n}\betae_T^\top\bfX_T^\top(\by-\bfX\betae) &=\lambda\|\betae_T\|_1,\\
\frac{1}{n}\barbbeta^\top_{T}\bfX_{T}^\top(\by-\bfX\betae) &\le \lambda\|\barbbeta_{T}\|_1.
\end{align*}
Subtracting the first relation from the second one, we get
\begin{align}\label{eq:2.1}
\frac{1}{n}(\barbbeta-\betae)_T^\top\bfX_T^\top(\by-\bfX\betae) &\le \lambda(\|\barbbeta_T\|_1-\|\betae_T\|_1).
\end{align}
We define now the vector $\barbbeta$ by the relations
$$
\barbbeta_T = \betazT_T+(\bfX_T^\top\bfX_T)^\dag\bfX_T^\top\bfX_{T^c}(\betazT-\betae)_{T^c}\qquad\text{and}\qquad \barbbeta_{T^c} = 0.
$$
This choice of $\barbbeta$ may appear somewhat strange and complicated, but is made in order that the relation
$\bfX_T(\barbbeta-\betae)_T =\Pi_T\bfX(\betazT-\betae)$ be satisfied. On the other hand, one easily checks that
$\by= \bfX_T\betaz_T+\bfX_{T^c}\betaz_{T^c}+\bxi = \bfX\betazT+(\bfI_n-\Pi_T)\bxi$.
Replacing these expressions of $\barbbeta$ and $\by$ in (\ref{eq:2.1}), we find that for every $T\subset [p]$,
\begin{align*}
\frac{1}{n}(\betazT-\betae)^\top\bfX^\top\Pi_T\bfX(\betazT-\betae) &\le \lambda(\|\barbbeta_T\|_1-\|\betae_T\|_1)\le \lambda
\|(\barbbeta-\betae)_T\|_1.
\end{align*}
Equivalently, this relation may be written as
\begin{align*}
\frac{1}{n}\|\Pi_T\bfX(\betazT-\betae)\|_2^2 = \frac{1}{n}\|\bfX_T(\barbbeta-\betae)_T\|_2^2  \le \lambda
\|(\barbbeta-\betae)_T\|_1.
\end{align*}
In view of the fact that $\sqrt{|T|/n}\,\|\bfX_T \bu\|_2\ge \nu_T\|\bu\|_1$ for every $\bu\in\mr^{|T|}$, we get
\begin{align*}
\|(\barbbeta-\betae)_T\|_1\le \frac{\sqrt{|T|}\|\bfX_T(\barbbeta-\betae)_T\|_2 }{\sqrt{n}\,\nu_T}.
\end{align*}
Combining the last two displays, we obtain
$$
\frac{1}{n}\|\Pi_T\bfX(\betazT-\betae)\|_2^2 = \frac{1}{n}\|\bfX_T(\barbbeta-\betae)_T\|_2^2  \le \lambda
\frac{\sqrt{|T|}\|\bfX_T(\barbbeta-\betae)_T\|_2 }{\sqrt{n}\,\nu_T}.
$$
Dividing both sides of the last inequality by $\|\bfX_T(\barbbeta-\betae)_T\|^2_2$, we can infer the desired result.
\end{proof}

\begin{proof}[Proof of Theorem \ref{thm3}]
Recall that according to (\ref{eq:2.1}), for every $J\subset [p]$ and $\barbbeta\in\mr^p$,
\begin{align*}
\frac{1}{n}(\barbbeta-\betae)_J^\top\bfX_J^\top(\by-\bfX\betae) &\le \lambda(\|\barbbeta_J\|_1-\|\betae_J\|_1).
\end{align*}
Replacing the expression of $\by$ in this inequality, we find that
\begin{align*}
\frac{1}{n}(\barbbeta-\betae)_J^\top\bfX_J^\top(\bfX\betaz+\bxi-\bfX\betae) &\le \lambda(\|\barbbeta_J\|_1-\|\betae_J\|_1).
\end{align*}
Let us introduce the two difference vectors $\bdelta = \betae-\betaz$ and $\bar\bdelta =  \betae-\barbbeta$. The last
display combined with the decomposition $\bxi = \Pi_T\bxi+(\bfI_n-\Pi_T)\bxi$, for every $T\subset [p]$, yields
\begin{align*}
\frac{1}{n}\bar\bdelta_J^\top\bfX_J^\top\bfX\bdelta  &\le  \frac1n \bar\bdelta_J^\top\bfX_J^\top\Pi_T\bxi
+\frac1n \bar\bdelta_J^\top\bfX_J^\top(\bfI_n-\Pi_T)\bxi+ \lambda(\|\barbbeta_J\|_1-\|\betae_J\|_1).
\end{align*}
Using the identity $\bu^\top\bu' = \frac12(\|\bu\|_2^2+\|\bu'\|_2^2-\|\bu-\bu'\|_2^2)$ we get for every $J\subset[p]$,
\begin{align}\label{1.1}
\frac{\|\bfX_J\bar\bdelta_J\|_2^2+\|\bfX\bdelta\|_2^2}{n}  &\le
\frac1n\|\bfX_J\bar\bdelta_J-\bfX\bdelta\|_2^2+ \frac2n \bar\bdelta_J^\top\bfX_J^\top(\bfI_n-\Pi_T)\bxi+\frac2n \bar\bdelta_J^\top\bfX_J^\top
\Pi_T\bxi\nonumber\\
&\qquad + 2\lambda(\|\barbbeta_J\|_1-\|\betae_J\|_1).
\end{align}
To prove (\ref{eq:3.1}) we choose $J=[p]$, for which (\ref{1.1}) becomes	
\begin{align}\label{1.2}
\frac{1}{n}\|\bfX\bar\bdelta\|_2^2+\frac1n\|\bfX\bdelta\|_2^2  &\le
\frac1n\|\bfX\bar\bdelta-\bfX\bdelta\|_2^2+ \frac2n \bar\bdelta^\top\bfX^\top\bxi+ 2\lambda(\|\barbbeta\|_1-\|\betae\|_1)\\
&\le
\frac1n\|\bfX\bar\bdelta-\bfX\bdelta\|_2^2+ \frac2n \bar\bdelta^\top_{T^c}\bfX^\top_{T^c}(\bfI_n-\Pi_T)\bxi
+\frac2n \bar\bdelta^\top\bfX^\top\Pi_T\bxi \nonumber\\
&\qquad+ 2\lambda(\|\bar\bdelta_{T}\|_1-\|\bar\bdelta_{T^c}\|_1)+4\lambda\|\barbbeta_{T^c}\|_1,\qquad\forall T\subset [p].\label{1.21}
\end{align}
Notice that
$\bar\bdelta^\top_{T^c}\bfX^\top_{T^c}(\bfI_n-\Pi_T)\bxi\le \sum_{j\in T^c}|\bx^j{}^\top(\bfI_n-\Pi_T)\bxi|\cdot|\bar\delta_j|$ and $|\bar\bdelta^\top\bfX^\top\Pi_T\bxi|\le \|\bfX\bar\bdelta\|_2\|\Pi_T\bxi\|_2$.
Replacing $\lambda$ by its value $\gamma \sigma^*\big(\frac2n\log(p/\delta)\big)^{1/2}$ and restricting our attention to the event
$\mathcal B_T$ defined as the intersection of the events $\big\{\max_{j\in T^c} |\bxi^\top(\bfI_n-\Pi_T)\bx^j|/\|(\bfI_n-\Pi_T)\bx^j\|_2\le \sigma^*\big(2\log(p/\delta)\big)^{1/2}\big\}$
and $\big\{\|\Pi_T\bxi\|_2\le \sigma^*\big(\sqrt{|T|}+\sqrt{2\log(1/\delta)}\big)\big\}$ we obtain
\begin{align}\label{1.4}
\frac{1}{n}\|\bfX\bar\bdelta\|_2^2+\frac1n\|\bfX\bdelta\|_2^2   &\le
\frac1n\|\bfX\bar\bdelta-\bfX\bdelta\|_2^2+ 4\lambda\|\barbbeta_{T^c}\|_1+\frac2n\|\bfX\bar\bdelta\|_2\|\Pi_T\bxi\|_2 \nonumber\\
&\qquad+{2\lambda} \Big(\|\bar\bdelta_{T}\|_1-\|\bar\bdelta_{T^c}\|_1+\gamma^{-1}\|(\bar\bdelta\odot\bomega)_{T^c}\|_1\Big).
\end{align}
The definition of $\bar\kappa_{T,\gamma,\bomega}$ implies that
$\|\bar\bdelta_{T}\|_1-\|\bar\bdelta_{T^c}\|_1+\gamma^{-1}\|(\bar\bdelta\odot\bomega)_{T^c}\|_1 \le
\frac{|T|^{1/2}\cdot \|\bfX\bar\bdelta\|_2}{(n\bar\kappa_{T,\gamma,\bomega})^{1/2}}$ (note that this inequality is
trivial when the left hand-side is negative), therefore
\begin{align}\label{1.41}
\frac2n\|\bfX\bar\bdelta\|_2\|\Pi_T\bxi\|_2&+{2\lambda} \Big(\|\bar\bdelta_{T}\|_1-\|\bar\bdelta_{T^c}\|_1+\gamma^{-1}\|(\bar\bdelta\odot\bomega)_{T^c}\|_1\Big)
\nonumber\\
&\le 2\frac{\|\bfX\bar\bdelta\|_2}{\sqrt{n}}\Big(\frac1{\sqrt{n}}\|\Pi_T\bxi\|_2+\lambda (|T|/\bar\kappa_{T,\gamma,\bomega})^{1/2}\Big) \nonumber\\
&\le \frac{\|\bfX\bar\bdelta\|_2^2}{n}+\Big(\frac1{\sqrt{n}}\|\Pi_T\bxi\|_2+\lambda (|T|/\bar\kappa_{T,\gamma,\bomega})^{1/2}\Big)^2\nonumber\\
&\le \frac{\|\bfX\bar\bdelta\|_2^2}{n}+\frac{4\sigma^*{}^2(|T|+2\log(1/\delta))}{n}+\frac{4\gamma^2\sigma^*{}^2|T|\log(p/\delta)}
{n\bar\kappa_{T,\gamma,\bomega}}.
\end{align}
After replacing (\ref{1.41}) in (\ref{1.4}), we remark that the terms $\frac{1}{n}\|\bfX\bar\bdelta\|_2^2$ cancel out and we get inequality (\ref{eq:3.1}).
Classical results on the tails of Gaussian and $\chi^2$ distributions imply that $\mathcal B_T$ is at least of probability $1-2\delta$, for every $T$.
The assertion of (\ref{eq:3.1}) follows by choosing $T$ to be a subset of $[p]$ minimizing the right hand-side of (\ref{eq:3.1}).

The proof of the second claim of the theorem is identical to that of  (\ref{eq:3.1}) and,
therefore, is left to the reader.
\end{proof}

\begin{proof}[Proof of Theorem~\ref{thm:4}]
According to (\ref{1.2}), for every $\bar\bbeta\in\mr^p$ and for $\bdelta = \betae-\betaz$ and $\bar\bdelta = \betae-\bar\bbeta$, the inequality
\begin{align}\label{2.8}
\frac{1}{n}\|\bfX\bar\bdelta\|_2^2+\frac1n\|\bfX\bdelta\|_2^2  &\le
\frac1n\|\bfX\bar\bdelta-\bfX\bdelta\|_2^2+ \frac2n \bar\bdelta^\top\bfX^\top\bxi+ 2\lambda(\|\barbbeta\|_1-\|\betae\|_1)
\end{align}
holds for every $\lambda>0$. We split the stochastic term $\bar\bdelta^\top\bfX^\top\bxi$ into two terms
$\bar\bdelta^\top\bfX^\top(\bfI_n-\Pi_T)\bxi$ and $\bar\bdelta^\top\bfX^\top\Pi_T\bxi$. The first one can be bounded using the
duality inequality $\bar\bdelta^\top\bfX^\top(\bfI_n-\Pi_T)\bxi=\bar\bdelta_{T^c}^\top\bfX_{T^c}^\top(\bfI_n-\Pi_T)\bxi\le
\|\bar\bdelta_{T^c}\|_1\|\bfX_{T^c}^\top(\bfI_n-\Pi_T)\bxi\|_\infty\le (\|\barbbeta_{T^c}\|_1+\|\betae_{T^c}\|_1)\|\bfX_{T^c}^\top(\bfI_n-\Pi_T)\bxi\|_\infty$, while the second one, in view of the Cauchy-Schwarz inequality,
satisfies $\bar\bdelta^\top\bfX^\top\Pi_T\bxi\le \|\bfX\bar\bdelta\|_2\|\Pi_T\bxi\|_2\le  (\|\bfX\bar\bdelta\|^2_2+\|\Pi_T\bxi\|^2_2)/2$.
As in the proof of fast rates, we restrict our attention to the event
$\mathcal B_T$ defined as the intersection of the events $\big\{\max_{j\in T^c} |\bxi^\top(\bfI_n-\Pi_T)\bx^j|/\|(\bfI_n-\Pi_T)\bx^j\|_2\le \sigma^*\big(2\log(p/\delta)\big)^{1/2}\big\}$
and $\big\{\|\Pi_T\bxi\|_2\le \sigma^*\big(\sqrt{|T|}+\sqrt{2\log(1/\delta)}\big)\big\}$. On this event, we get
\begin{align}\label{2.9}
2\bar\bdelta^\top\bfX^\top\bxi\le \|\bfX\bar\bdelta\|^2_2+2\sigma^*{}^2(|T|+2\log(1/\delta)) + 2 \lambda\gamma^{-1} n (\|\barbbeta_{T^c}\|_1+\|\betae_{T^c}\|_1).
\end{align}
Combining this inequality with (\ref{2.8}), we get the desired result.
\end{proof}

\newcommand{\ej}{\ensuremath{\betae}}
\newcommand{\ejp}{\ensuremath{\betae'}}
\newcommand{\beu}{\ensuremath{\ej^u}}
\newcommand{\obj}{\ensuremath{g}}

\begin{proof}[Proof of Proposition \ref{prop:2}]
We first recall that given data $(\by,\bfX)$ and tuning parameter $\lambda,$ all solutions~\eqref{lasso} provide the same prediction, which means that it is sufficient to consider only one of the solutions. For completeness, let us briefly derive this fact. We do this by contradiction, considering two solutions $\ej,\ejp$
\begin{equation*}
\ej,\ejp\in\text{arg}\min_{\bbeta}\obj(\bbeta),~~~~\obj(\bbeta):=\frac{1}{2n}\|\by-\bfX\bbeta\|_2^2+\lambda\|\bbeta\|_1,
\end{equation*}
and assuming that these solutions lead to different predictions, that is, $\bfX\ej\neq\bfX\ejp.$ Fix now a number~$u\in(0,1)$ and introduce the linear combination $\beu:=u\ej+(1-u)\ejp.$  The definition of $\ej,\ejp$ as minimizers of the objective function~$\obj$ implies $\obj(\ej)=\obj(\ejp)\leq \obj(\beu).$ On the other hand, we can deduce the strict inequality $\obj(\beu)<u\obj(\ej)+(1-u)\obj(\ejp)$ from the strict convexity of the objective function~$g,$ which in turn is due to the strict convexity of the function $\boldsymbol\gamma\mapsto\|\by-\boldsymbol\gamma\|_2^2.$  Combining these findings leads to $\obj(\beu)<u\obj(\beu)+(1-u)\obj(\beu)=\obj(\beu)$. This is a contradiction, so that we can conclude that for given $(\by,\bfX)$ and $\lambda,$ all Lasso solutions provide the same prediction.\\
We can now look at specific solutions to~\eqref{lasso} for Example~\ref{example1}. To this end, we note that the Karush-Kuhn-Tucker conditions for Example~\ref{example1} are the following:
\begin{align*}
\frac1{\sqrt2}(\be_1+\be_{j+1})^\top(\sqrt{2}\be_1+\frac1{\sqrt{n}}\bxi-
\frac1{\sqrt2}\sum_{k=1}^{2m}\hat\beta_k\be_1 -\frac1{\sqrt2}(\hat\beta_j-\hat\beta_{m+j})\be_{j+1} ) &\in \lambda\sign(\widehat\beta_j),\\
\frac1{\sqrt2}(\be_1-\be_{j+1})^\top(\sqrt{2}\be_1+\frac1{\sqrt{n}}\bxi-
\frac1{\sqrt2}\sum_{k=1}^{2m}\hat\beta_k\be_1 -\frac1{\sqrt2}(\hat\beta_j-\hat\beta_{m+j})\be_{j+1} ) &\in \lambda\sign(\widehat\beta_{m+j}).
\end{align*}
After simplification, we get
\begin{align}
1+\frac1{\sqrt{2n}}(\xi_1+\xi_{j+1})-
\frac12\sum_{k=1}^{2m}\hat\beta_k -\frac1{2}(\hat\beta_j-\hat\beta_{m+j}) &\in \lambda\sign(\widehat\beta_j),\label{eq:2.7}\\
1+\frac1{\sqrt{2n}}(\xi_1-\xi_{j+1})-
\frac12\sum_{k=1}^{2m}\hat\beta_k +\frac1{2}(\hat\beta_j-\hat\beta_{m+j}) &\in \lambda\sign(\widehat\beta_{m+j}).\label{eq:2.8}
\end{align}
We will restrict our attention to the event $\mathcal B = \{\xi_1<0\}$ which has a probability $1/2$. First note that
if $\lambda\ge 1$, then the vector $\betae = 0$ is a solution to the system (\ref{eq:2.7})-(\ref{eq:2.8}). Therefore,
$\betae = 0$ and hence $\ell_n(\betae,\betaz) = 2$. Thus, in the case
$\lambda\ge 1$ the Lasso is not consistent, which is not a surprise since the theory recommends to always choose $\lambda$
of order $O((\log(p)/n)^{1/2})$.

In the more interesting case $\lambda\in(\frac{m+1-\sqrt{2n}}{m\sqrt{2n}},1)$, a Lasso solution is given by
\begin{equation}
\hat\beta_j =
\begin{cases}
\frac{2(1-\lambda)}{m+1}, & \xi_{j+1}>0,\\
0 , & \xi_{j+1}<0,
\end{cases}\qquad
\hat\beta_{m+j} =
\begin{cases}
0 , & \xi_{j+1}>0,\\
\frac{2(1-\lambda)}{m+1}, & \xi_{j+1}<0.
\end{cases}\label{ex:lasso}
\end{equation}
Indeed, for instance if $\xi_{j+1}>0$, replacing these values of $\betae$ in (\ref{eq:2.7}) and (\ref{eq:2.8}) we get on the event $\mathcal B$:
\begin{align*}
1+\frac{\xi_1+\xi_{j+1}}{\sqrt{2n}}-\frac12\sum_{k=1}^{2m}\hat\beta_k -\frac1{2}(\hat\beta_j-\hat\beta_{m+j})
&= 1-\frac{m}2\times \frac{2(1-\lambda)}{m+1}-\frac1{2}\times\frac{2(1-\lambda)}{m+1}=\lambda\\
1+\frac{\xi_1-\xi_{j+1}}{\sqrt{2n}}-\frac12\sum_{k=1}^{2m}\hat\beta_k +\frac1{2}(\hat\beta_j-\hat\beta_{m+j})
&= 1-\frac2{\sqrt{2n}}-\frac{m}2\times \frac{2(1-\lambda)}{m+1}+\frac1{2}\times\frac{2(1-\lambda)}{m+1}\\
&=\lambda\big(1-\frac{2}{m+1}\big)-\frac{\sqrt{2}}{\sqrt{n}}+\frac{2}{m+1}\in [-\lambda,\lambda],
\end{align*}
where the last inclusion follows from the relation $m+1\ge \sqrt{2n}$. For the vector (\ref{ex:lasso}), we check that the prediction loss
\begin{align*}
\ell_n(\betae,\betaz)
	&= \Big(\sqrt{2}-\frac{\sqrt{2}m(1-\lambda)}{m+1}\Big)^2+\frac{2m(1-\lambda)^2}{(m+1)^2}\\
	&= 2\Big(\frac{1+m\lambda}{m+1}\Big)^2+\frac{2m(1-\lambda)^2}{(m+1)^2}\\
	&= \frac{2+2m\lambda^2}{m+1}	\ge  \frac{1}{m} \ge \frac{1}{\sqrt{2n}}.
\end{align*}
Finally, in the case $\lambda\in[0,\frac{m+1-\sqrt{2n}}{m\sqrt{2n}}]$, a Lasso solution on the event $\mathcal B$ is given by
\begin{equation}
\hat\beta_j =
\begin{cases}
\frac{\sqrt{2n}+m-1}{m\sqrt{2n}}-\lambda, & \xi_{j+1}>0,\\
\frac{\sqrt{2n}-m-1}{m\sqrt{2n}}+\lambda , & \xi_{j+1}<0,
\end{cases}\qquad
\hat\beta_{m+j} =
\begin{cases}
\frac{\sqrt{2n}-m-1}{m\sqrt{2n}}+\lambda, & \xi_{j+1}>0,\\
\frac{\sqrt{2n}+m-1}{m\sqrt{2n}}-\lambda, & \xi_{j+1}<0,
\end{cases}\label{ex:lasso2}
\end{equation}
for every $j\in[m]$. Indeed, for instance if $\xi_{j+1}>0$, replacing these
values of $\betae$ in (\ref{eq:2.7}) and (\ref{eq:2.8}) we get on the event $\mathcal B$:
\begin{align*}
1+\frac{\xi_1+\xi_{j+1}}{\sqrt{2n}}-\frac12\sum_{k=1}^{2m}\hat\beta_k -\frac1{2}(\hat\beta_j-\hat\beta_{m+j})
&= 1-\frac{m}2\times \frac{2(\sqrt{2n}-1)}{m\sqrt{2n}}-\frac1{2}\times\Big(\frac{2m}{m\sqrt{2n}}-2\lambda\Big)=\lambda\\
1+\frac{\xi_1-\xi_{j+1}}{\sqrt{2n}}-\frac12\sum_{k=1}^{2m}\hat\beta_k +\frac1{2}(\hat\beta_j-\hat\beta_{m+j})
&= 1-\frac{2}{\sqrt{2n}}-\frac{(\sqrt{2n}-1)}{\sqrt{2n}}+\Big(\frac{1}{\sqrt{2n}}-\lambda\Big)=-\lambda.
\end{align*}
The prediction loss of this estimator is
\begin{align*}
\ell_n(\betae,\betaz)
	&\ge m \Big(\frac{1}{\sqrt{n}}-\sqrt{2}\lambda\Big)^2\\
	&\ge 2m \Big(\frac{1}{\sqrt{2n}}-\frac{m+1-\sqrt{2n}}{m\sqrt{2n}}\Big)^2\\
	&= 2m \Big(\frac{\sqrt{2n}-1}{m\sqrt{2n}}\Big)^2\ge \frac1{2\sqrt{2n}},
\end{align*}
where for the last inequality we have used the facts that $\sqrt{2n}-1\ge \sqrt{n/2}$, $\forall n\ge 2$, and
$m\le \sqrt{2n}$. This completes the proof of the proposition.
\end{proof}

\begin{proof}[Proof of Proposition~\ref{prop:3.1}]
We will use a probabilistic argument.
Set $\boldf = \bfX\bu$. Denoting $s_j=\sign(u_j)=\sign(f_{j}-f_{j-1})$, $j\in T$, and  $s_j=-\sign(u_j)=-\sign(f_{j}-f_{j-1})$,
$j\in T^c$, with the convention that $f_0=0$, $a_{n+1}=a_n$ and $s_{n+1}=-\sign(f_n)$, we get
\begin{align*}
\|\bu_T\odot \ba_{T}\|_1-\|\bu_{T^c}\odot \ba_{T^c}\|_1 &=
\sum_{j\in T}a_j|f_{j}-f_{j-1}|-\sum_{j\in T^c}a_j|f_j-f_{j-1}|\\
&\le
\sum_{j\in T}a_j|f_{j}-f_{j-1}|-\sum_{j\in T^c}a_j|f_j-f_{j-1}|+a_n|f_n|\\
&=\sum_{j\in [n]}s_ja_j(f_{j}-f_{j-1})-a_ns_{n+1}f_n=\sum_{j\in [n]}f_j(s_ja_j-s_{j+1}a_{j+1}).
\end{align*}
More interestingly, since $|x|=\max_{t\in\{\pm1\}} tx$ for every $x\in\mr$, one easily checks
that\footnote{For notational convenience, we assume hereafter that $T$ is augmented
by the elements $\{1,n+1\}$.}
\begin{align*}
\|\bu_T\odot \ba_{T}\|_1-\|\bu_{T^c}\odot \ba_{T^c}\|_1
&=\min_{\substack{\bt\in\{\pm1\}^{n+1}\\ \bt_T=\bs_T}}\sum_{j\in [n]}f_j(t_ja_j-t_{j+1}a_{j+1}).
\end{align*}
Let us set $j_0=1$, $j_{s+1}=n+1$ and consider the partition $\{B_\ell = \llbracket j_{\ell-1},j_{\ell}\llbracket, \ell\in[s+1]\}$
of $[n]$. Let $\Delta_\ell = |B_\ell|$ for every $\ell\in[s+1]$. Permuting the minimum and the summation, we get
\begin{align*}
\|\bu_T\odot \ba_{T}\|_1-\|\bu_{T^c}\odot \ba_{T^c}\|_1
&=\sum_{\ell=1}^{s+1} \Psi_\ell
\end{align*}
where
\begin{align*}
\Psi_\ell
    &= \min_{\substack{\bt\in\{\pm1\}^{n+1}\\(t_{j_{\ell-1}},t_{j_\ell})=(s_{j_{\ell-1}},s_{j_\ell})}}
        \sum_{j=j_{\ell-1}}^{j_\ell-1} f_{j}(t_ja_j-t_{j+1}a_{j+1})\\
    &=\min_{\substack{\bar\bt\in\{\pm1\}^{\Delta_\ell+1}\\(\bar t_1,\bar t_{\Delta_\ell+1})=(s_{j_{\ell-1}},s_{j_\ell})}}
        \sum_{j=1}^{\Delta_\ell} f_{j_{\ell-1}+j-1}(\bar t_ja_{j_{\ell-1}+j-1}-\bar t_{j+1}a_{j_{\ell-1}+j}).
\end{align*}
In what follows, we propose a bound on $\Psi_1$. The other $\Psi_\ell$'s can be evaluated similarly.
Let $\eps_1,\ldots,\eps_n$ be independent random variables with values in $\pm1$ such that
$p:=\bfP(\eps_j=1)=(1-(2\Delta_1)^{-1})$ and  $\bfP(\eps_j=-1)=(2\Delta_1)^{-1}$. Further, we define
$\bar\eps_1=s_1$ and $\bar\eps_{j+1}=\bar\eps_1\times \eps_1\times\ldots\times\eps_j$ for every $j\in B_1$.
Since $\eps_j$'s are independent, $\{\bar\eps_j\}_{j\in[j_1]}$ is a Markov chain with values in $\{\pm1\}$.
For this Markov chain, we first check that
\begin{equation}
    \label{prob}
\bfP(\bar\eps_{j_1}=s_{j_1})\geq 1/4.
  \end{equation}
 Indeed, by symmetry, it suffices to consider the two cases $(s_1,s_{j_1})=(1,1)$ and $(s_1,s_{j_1})=(1,-1)$. In the first case, we
use the inclusion $\{\beps_{B_1}=\mathbf 1_{B_1}\}\subset\{\bar\eps_{j_1}=s_{j_1}\}$
to infer that
\begin{align*}
\bfP(\bar\eps_{j_1}=s_{j_1})\ge \bfP(\beps_{B_1}=\mathbf 1_{B_1})
=\prod_{j=1}^{j_1-1}\bfP(\eps_{j}=1)=(1-(2\Delta_1)^{-1})^{\Delta_1}.
\end{align*}
For $\Delta_1\ge 1$, one checks that $(1-(2\Delta_1)^{-1})^{\Delta_1}\ge 1/2$,
which yields
$\bfP(\bar\eps_{j_1}=s_{j_1})\ge \frac12$.
In the second case, $s_1=-s_{j_1}$, we use the inclusion
$\cup_{j\in B_1}\{\beps_{-j}=\mathbf 1,\eps_j=-1\}\subset\{\bar\eps_{j_1}=s_{j_1}=-s_{1}\}$, where
$\beps_{-j}$ is the vector obtained from $\beps_{B_1}$ by removing the $j$th entry.
This inclusion yields
\begin{align*}
\bfP(\bar\eps_{j_1}=s_{j_1})&\ge \sum_{j=1}^{\Delta_1}\bfP(\beps_{-j}=\mathbf 1,\eps_j=-1) =
\sum_{j=1}^{\Delta_1}(1-(2\Delta_1)^{-1})^{\Delta_1-1}\times(2\Delta_1)^{-1}\ge \frac14.
\end{align*}
On the other hand, for every $A>0$, we have
\begin{align*}
\bfP\bigg(\sum_{j=1}^{\Delta_1} f_{j}(\bar\eps_ja_j-\bar\eps_{j+1} a_{j+1})> A \bigg)&\le
\frac{1}{A}\,\bfE\Big[\Big|\sum_{j=1}^{\Delta_1}f_{j}(\bar\eps_ja_j-\bar\eps_{j+1}a_{j+1})\Big|\Big].
\end{align*}
We need now to evaluate the expectation of the random variable
$\Upsilon=|\sum_{j=1}^{\Delta_1}f_{j}\bar\eps_j(a_j-\eps_{j+1}a_{j+1})|$. To this end, since all the $a_j$ are nonnegative,
we remark that $\bfE(|a_j-\eps_{j+1}a_{j+1}|) = |a_j-a_{j+1}|+ (a_j\wedge a_{j+1})/\Delta_1$ and, therefore,
$$
\bfE[\Upsilon]\le \sum_{j=1}^{\Delta_1}|f_{j}|\cdot(|a_j-a_{j+1}|+a_j\Delta_1^{-1})\le
\bigg(\sum_{j=1}^{\Delta_1}f_{j}^2\bigg)^{1/2}\bigg(\sum_{j=1}^{\Delta_1}(2|a_j-a_{j+1}|^2+2a^2_j\Delta_1^{-2})\bigg)^{1/2}.
$$
Thus, taking $A=4\big(\sum_{j=1}^{\Delta_1}f_{j}^2\big)^{1/2}\big(\sum_{j=1}^{\Delta_1}(2|a_j-a_{j+1}|^2+2a^2_j\Delta_1^{-2})\big)^{1/2}$,
we get
$$
\bfP\bigg(\sum_{j=1}^{\Delta_1} f_{j}(\bar\eps_ja_j-\bar\eps_{j+1} a_{j+1})> A \bigg)\le 1/4.
$$
Combined with inequality (\ref{prob}), this entails that
$$
\bfP\bigg(\sum_{j=1}^{\Delta_1} f_{j}(\bar\eps_ja_j-\bar\eps_{j+1} a_{j+1})\le A\ \text{and}\ (\bar\eps_1,\bar\eps_{j_1})=(s_1,s_{j_1})\bigg)>0.
$$
Consequently, $\Psi_1\le A$. Applying the same argument to arbitrary $\ell\in[s+1]$, we get
$$
\Psi_\ell\le 4\bigg(\sum_{j\in B_\ell}f_{j}^2\bigg)^{1/2}\bigg(\sum_{j\in B_\ell}(2|a_j-a_{j+1}|^2+2a^2_j\Delta_\ell^{-2})\bigg)^{1/2}.
$$
In view of the Cauchy-Schwarz inequality, we get
\begin{align*}
\|\bu_T\odot \ba_{T}\|_1-\|\bu_{T^c}\odot \ba_{T^c}\|_1
&\le 4\bigg(\sum_{j\in [n]}f_{j}^2\bigg)^{1/2}\bigg(\sum_{\ell=1}^{s+1}\sum_{j\in B_\ell}(2|a_j-a_{j+1}|^2+2a^2_j\Delta_\ell^{-2})\bigg)^{1/2}\\
&\le 4\|\boldf\|_2 \bigg(2\sum_{j\in[n]}|a_j-a_{j+1}|^2+2(s+1)\|\ba\|_\infty^2\max_\ell\Delta_\ell^{-1}\bigg)^{1/2}.
\end{align*}
This completes the proof.
\end{proof}

\begin{proof}[Proof of Proposition~\ref{prop:3.2}]
We apply Theorem~\ref{thm3} with $\gamma=2$. This leads to (\ref{eq:3.1}) with
$$
r_{n,n,T}=
\frac{1+2|T|^{-1}\log(1/\delta)}{\log(p/\delta)}+\frac{4}{\bar\kappa_{T, 2, \bomega}}\le {3}+\frac{4}{\bar\kappa_{T, 2, \bomega}}.
$$
It remains to find a lower bound for $\bar\kappa_{T, 2, \bomega}$. To this end, we resort to Proposition~\ref{prop:3.1} with
$a_j = 1$ for every $j$ belonging to the set $T$ and $a_j = 1-\frac1{2\sqrt{n}}\|(\bfI_n-\Pi_{T})\bx^j\|_2$, $j\in T^c$.
Let $T=\{j_1,\ldots,j_s\}$ and $B_\ell=\llbracket j_{\ell-1},j_\ell\llbracket$ for $\ell\in[s+1]$ with the convention that $j_0=1$ and $j_{s+1}=n+1$.
Since the columns of $\bfX$ are given by $\bx^j=[\mathds 1(i\ge j)]_{i\in[n]}$, the projector
$\Pi_T$ projects onto the subspace of $\mr^n$ containing all the vectors that are constant on the partition $\{B_\ell\}$. Therefore,
one easily checks that $\|(\bfI_n-\Pi_T)\bx^j\|_2 = \sqrt{\frac{(j-j_{\ell-1})({j_\ell}-j)}{j_{\ell}-j_{\ell-1}}}$, for every $j\in B_\ell$.
This implies that $\ba_T=\mathbf 1_T$ and $\|\ba\|_\infty = 1$. Furthermore, we have
\begin{align*}
\sum_{j\in[n]}|a_j-a_{j+1}|^2 &\le \frac1{4n}\sum_{\ell\in[s+1]}\sum_{j\in B_\ell} \frac{\big(\sqrt{(j-j_{\ell-1})({j_\ell}-j)}
-\sqrt{(j+1-j_{\ell-1})({j_\ell}-j-1)}\big)^2}{j_{\ell}-j_{\ell-1}}\\
&\le \frac1{4n}\sum_{\ell\in[s+1]}\sum_{j=1}^{\Delta_\ell} \frac{\big((j-1)(\Delta_\ell-(j-1))
-j(\Delta_\ell-j)\big)^2}{\Delta_{\ell}(\sqrt{(j-1)(\Delta_\ell-(j-1))}+\sqrt{j(\Delta_\ell-j)})^2}\\
&\le \frac1{4n}\sum_{\ell\in[s+1]}\sum_{j=1}^{\Delta_\ell} \frac{\big(2j-3
-\Delta_\ell\big)^2}{\Delta_{\ell}({(j-1)(\Delta_\ell-(j-1))}+{j(\Delta_\ell-j)})}\\
&\le \frac1{4n}\sum_{\ell\in[s+1]}\sum_{j\le \Delta_\ell/2} \frac{\Delta_\ell^2}{\Delta_{\ell}\times j\times\Delta_\ell/2}
=\frac1{2n}\sum_{\ell\in[s+1]}\sum_{j\le \Delta_\ell/2} j^{-1}.
\end{align*}
Since obviously $\Delta_\ell\le n$, we can bound the sum $\sum_{j\le \Delta_\ell/2} j^{-1}$ by $1+\log(n/2)\le 2\log(n)$, provided that
$n\ge 3$. This yields $\sum_{j\in[n]}|a_j-a_{j+1}|^2\le (s+1)\log(n)/n$. Therefore, Proposition~\ref{prop:3.1} implies that
$$
\|\bu_T\|_1-\|\bu_{T^c}\odot \ba_{T^c}\|_1 \le
4\|\bfX\bu\|_2 \bigg(2(s+1)\bigg[\frac{\log(n)}{n}+\frac1{\Delta_{\min}}\bigg]\bigg)^{1/2}.
$$
Using the inequality $s+1\le 2s$, we infer from the above inequality that
$\bar\kappa_{T,2,\bomega}\ge \big\{64(\log(n)+(n/\Delta_{\min}))\big\}^{-1}.$
\end{proof}

\begin{proof}[Proof of Proposition~\ref{prop:4.1}]
Let us denote $\bdelta = \betae-\betaz$. We consider two cases separately.
The first case is when the inequality $\|\betae\|_1\le \frac{\gamma+1}{\gamma-1}\|\betaz\|_1$ is satisfied.
Then, using the Parseval identity, we have
$\ell_n(\betae,\betaz) = \frac1n\|\Pi_T\bfX\bdelta\|_2^2+\frac1n\|(\bfI_n-\Pi_T)\bfX\bdelta\|_2^2$.
The first summand can be bounded using inequality (\ref{eq:2.2}), so we focus on the second summand. Using the triangle inequality, we get
$\|(\bfI_n-\Pi_T)\bfX\bdelta\|_2=\|\sum_{j}(\bfI_n-\Pi_T)\bx^j\delta_j\|_2 \le \sum_{j}\|(\bfI_n-\Pi_T)\bx^j\delta_j\|_2\le
\|\bdelta\|_1\max_{j\in T^c}\|(\bfI_n-\Pi_T)\bx^j\|_2$. Furthermore, we have $\|\bdelta\|_1\le \|\betae\|_1+\|\betaz\|_1
\le \frac{2\gamma}{\gamma-1}\|\betaz\|_1$. Hence, putting these bounds together, we find
\begin{align*}
\ell_n(\betae,\betaz) &\le \frac1n\|\Pi_T\bfX\bdelta\|_2^2+ \frac{4\gamma^2\|\betaz\|_1^2}{n(\gamma-1)^2}
\max_{j\in T^c}\|(\bfI_n-\Pi_T)\bx^j\|_2^2\\
&\le \frac{2\lambda^2|T|}{\nu_T^2}+\frac{4\sigma^*{}^2(|T|+2\log(1/\delta))}{n}+\frac{4\gamma^2\|\betaz\|_1^2}{n(\gamma-1)^2}
\max_{j\in T^c}\|(\bfI_n-\Pi_T)\bx^j\|_2^2,
\end{align*}
with a probability at least $1-\delta$. In the second case, $\|\betae\|_1> \frac{\gamma+1}{\gamma-1}\|\betaz\|_1$, according to
inequalities (\ref{2.8}) and (\ref{2.9}) applied to $\bar\bdelta=\bdelta$, we have
\begin{align*}
\frac2n\|\bfX\bdelta\|_2^2
    &\le \frac2n\bdelta^\top\bfX^\top\bxi+2\lambda(\|\betaz\|_1-\|\betae\|_1)\\
    &\le \frac1{n}\|\bfX\bdelta\|_2^2+\frac{2\sigma^*{}^2(|T|+2\log(1/\delta))}{n} +2\lambda\gamma^{-1}\big(\underbrace{(\gamma+1)\|\betaz\|_1-(\gamma{-1})\|\betae\|_1}_{\le 0}\big),
\end{align*}
with probability at least $1-2\delta$. This completes the proof.
\end{proof}

\begin{proof}[Proof of Proposition~\ref{prop:4.2}]
Applying Theorem~\ref{thm:4} to $\lambda=\sigma^*(\log(n/\delta)/(nk))^{1/2}$, in conjunction with the bound $\rho_T\le (2k)^{-1/2}$,
we get that the inequality
\begin{align}\label{eq:4.5}
\frac1n\|\hat\boldf^{\rm TV}-\boldf^*\|_2^2
&\le \inf_{\bar\boldf\in\mr^n} \bigg\{ \frac1n\|\bar\boldf-\boldf^*\|_2^2  +
4 \lambda \|\bar\boldf\|_{\rm TV}\bigg\} + \frac{2\sigma^*{}^2(k+2\log(1/\delta))}{n}
\end{align}
holds true with a probability at least $1-2\delta$. Replacing $\bar\boldf$ by $\boldf^\uparrow$, and replacing $\lambda$ by its expression,
we get
\begin{align*}
\frac1n\|\hat\boldf^{\rm TV}-\boldf^*\|_2^2
&\le \frac1n\|\boldf^\uparrow-\boldf^*\|_2^2  + \frac{4\sigma^*{}^2\log(1/\delta)}{n}+
\underbrace{4\sigma^*\Big(\frac{\log(n/\delta)}{nk}\Big)^{1/2}\|\boldf^\uparrow\|_{\rm TV}+ \frac{2\sigma^*{}^2k}{n}}_{:=\Psi(k)},
\end{align*}
with probability $\ge 1-2\delta$. One readily checks that $x\mapsto \Psi(x)$ achieves its (global) minimum at
$x_{\min}=(\|\boldf^\uparrow\|_{\rm TV}/\sigma^*)^{2/3}(n\log(n/\delta))^{1/3}$. Furthermore, the definition of $k$ entails that
$1/k\le 1/x_{\min}$ and $k\le x_{\min}+1$. This yields $\Psi(k)\le \Psi(x_{\min})+2\sigma^*{}^2/n$ and the desired result follows.
\end{proof}
\begin{proof}[Proof of Proposition~\ref{prop:4.3}]
We start by observing that $\mathcal H^n_{\alpha,L}$ is a closed convex subset of $\mr^n$. Therefore, for every $\bar\boldf\in\mathcal
H^n_{\alpha,L}$ and for $\boldg^*=\argmin_{\boldf\in\mathcal H^n_{\alpha,L}}\|\boldf-\boldf^*\|_2$, we have
\begin{align}\label{eq:4.71}
\|\bar\boldf-\boldf^*\|_2^2 \le \|\boldg^*-\boldf^*\|_2^2 +\|\bar\boldf-\boldg^*\|_2^2.
\end{align}
Now, let $T=\{a_1,a_2,\ldots,a_k\}\subset [n]$ be any set satisfying $a_1=1$ and
$0\le a_{j+1}-a_j\le 2n/k$, $\forall j\in[k]$. This set induces
the partition $\mathcal T = \{I_1,\ldots,I_{k}\}$ where each $I_j=\llbracket a_j,a_{j+1}\llbracket$ is an interval of
length smaller or equal to $2n/k$. We define $\bar\boldf$ as an approximation of $\boldg^*$ by
a piecewise linear vector on the partition $\mathcal T$: $\bar f_i = g^*_{a_j}+\frac{i-a_j}{a_{j+1}-a_j}
\,\{g^*_{a_{j+1}}-g^*_{a_j}\}$, $\forall i\in I_j$. On the one hand, one can easily check that
$\bar\boldf$ belongs to the set $\mathcal H^n_{\alpha,L}$ and, therefore, satisfies inequality (\ref{eq:4.71}). In conjunction with
(\ref{eq:4.5}), this implies that for $\lambda=\sigma^*(\log(n/\delta)/(nk))^{1/2}$, the inequalities
\begin{align*}
\frac1n\|\hat\boldf^{\rm TV}-\boldf^*\|_2^2
&\le \inf_{\bar\boldf\in\mr^n} \bigg\{ \frac1n\|\bar\boldf-\boldf^*\|_2^2  +
4 \lambda \|\bar\boldf\|_{\rm TV}\bigg\} + \frac{2\sigma^*{}^2(k+2\log(1/\delta))}{n}\nonumber\\
&\le \frac1n\|\boldg^*-\boldf^*\|_2^2  +\frac1n\|\bar\boldf-\boldg^*\|_2^2+
4 \lambda \|\bar\boldf\|_{\rm TV} + \frac{2\sigma^*{}^2(k+2\log(1/\delta))}{n}
\end{align*}
hold true with a probability at least $1-2\delta$. Since $\boldg^*\in \mathcal H^n_{\alpha,L}$, we have
\begin{align*}
\|\bar\boldf-\boldg^*\|_2^2&\le \sum_{j}\sum_{i\in I_j}|g^*_{a_j}+\frac{i-a_j}{a_{j+1}-a_j}
\,\{g^*_{a_{j+1}}-g^*_{a_j}\}-g^*_{i}|^2\\
&\le
\sum_{j}\sum_{i\in I_j}(\frac{a_{j+1}-i}{a_{j+1}-a_j}|g^*_{a_j}-g^*_{i}|^2+\frac{i-a_j}{a_{j+1}-a_j}
\,|g^*_{a_{j+1}}-g^*_{i}|^2)\\
&\le n L^2n^{-2\alpha}(2n/k)^{2\alpha}\le 4nL^2k^{-2\alpha}.
\end{align*}
On the other hand, since $\bar\boldf$ is piecewise constant, it holds that
$\|\bar\boldf\|_{\rm TV} = \sum_j |g^*_{a_{j+1}}-g^*_{a_j}|\le k L n^{-\alpha}(2n/k)^\alpha$ $\le 2L k^{1-\alpha}$.
Combining all these bounds, we get that with probability at least $1-2\delta$,
\begin{align*}
\frac1n\|\hat\boldf^{\rm TV}-\boldf^*\|_2^2
&\le \frac1n\|\boldg^*-\boldf^*\|_2^2  + \frac{4\sigma^*{}^2\log(1/\delta)}{n}\nonumber\\
&\qquad\qquad+
4L^2k^{-2\alpha}+
8L\sigma^*\Big(\frac{\log(n/\delta)}{nk}\Big)^{1/2}k^{1-\alpha}  + \frac{2\sigma^*{}^2k}{n}.
\end{align*}
The inequality between the geometric and arithmetic means yields $8L\sigma^*\Big(\frac{\log(n/\delta)}{nk}\Big)^{1/2}k^{1-\alpha} \le
8L^2k^{-2\alpha}+2\sigma^*{}^2\frac{k\log(n/\delta)}{n}$. Thus, with probability at least $1-2\delta$, we have
\begin{align*}
\frac1n\|\hat\boldf^{\rm TV}-\boldf^*\|_2^2
&\le \frac1n\|\boldg^*-\boldf^*\|_2^2  + \frac{4\sigma^*{}^2\log(1/\delta)}{n}
+12L^2k^{-2\alpha}+ \frac{4\sigma^*{}^2k\log(n/\delta)}{n}.
\end{align*}
Using the inequalities $k-1\le (L^2n/(\sigma^*{}^2\log(n/\delta)))^{1/(2\alpha+1)}\le k$ we get
\begin{align*}
12L^2k^{-2\alpha}&\le
12L^2\bigg(\frac{L^2n}{\sigma^*{}^2\log(n/\delta)}\bigg)^{-2\alpha/(2\alpha+1)}  =
12L^2\bigg(\frac{\sigma^*{}^2\log(n/\delta)}{L^2n}\bigg)^{2\alpha/(2\alpha+1)},
\end{align*}
and
\begin{align*}
\frac{4\sigma^*{}^2k\log(n/\delta)}{n}&\le
\frac{4\sigma^*{}^2\log(n/\delta)}{n}\times \bigg(\frac{L^2n}{\sigma^*{}^2\log(n/\delta)}\bigg)^{1/(2\alpha+1)}+
\frac{4\sigma^*{}^2\log(n/\delta)}{n}\\
&=
4L^2\bigg(\frac{\sigma^*{}^2\log(n/\delta)}{L^2n}\bigg)^{2\alpha/(2\alpha+1)}+
\frac{4\sigma^*{}^2\log(n/\delta)}{n}.
\end{align*}
To complete, we use the fact that $\|\boldg^*-\boldf^*\|_2^2=\inf_{\boldf\in\mathcal H^n_{\alpha,L}}
\frac1n\|\boldf-\boldf^*\|_2^2$.
\end{proof}

\begin{proof}[Proof of Proposition~\ref{prop:3}]
Let $\bu^*$ be any vector satisfying the cone constraint $\|\bu_{T^c}^*\|_1\le \bar c\|\bu_T^*\|_1$. Clearly, the vector $-\bu^*$
satisfies the same constraint. For any $J\subset T^c$, we define the random vector $\bzeta = \sigma^*{}^{-1}(\bfX_J^\top\bfX_J)^\dag\bfX_J^\top\bxi$.
Let $\bv \in\mr^p$ be the random vector defined by $\bv_{J^c}=0$ and $\bv_J = \alpha \bzeta$, where  $\alpha = (\bar c\|\bu_T^*\|_1-\|\bu^*_{T^c}\|_1)/\|\bzeta\|_1$ is a positive number ensuring that the vectors $\pm\bu^*+\bv$ satisfy the cone constraint.
Indeed, the triangle inequality implies that
\begin{align*}
\|(\pm\bu^*+\bv)_{T^c}\|_1- \bar c\|(\pm\bu^*+\bv)_T\|_1
		&=\|(\pm\bu^*+\bv)_{T^c}\|_1- \bar c\|\bu^*_T\|_1\\
		&\le \|\bu^*_{T^c}\|_1+\|\bv_{J}\|_1- \bar c\|\bu^*_T\|_1=0.
\end{align*}
Further, we remark that $\bxi^\top\bfX\bv= (\alpha/\sigma^*)\bxi^\top\Pi_J\bxi \ge 0$ and therefore
\begin{align*}
\bar\eta&\ge \max_{\bu\in\{\pm\bu^*\}}\frac{|\bxi^\top\bfX(\bu+\bv)|}{\sigma^*\|\bfX(\bu+\bv)\|_2}\ge \max_{\bu\in\{\pm\bu^*\}}\frac{|\bxi^\top\bfX(\bu+\bv)|}{\sigma^*(\|\bfX\bu^*\|_2+\|\bfX\bv\|_2)}\\
&= \frac{|\bxi^\top\bfX\bu|+\bxi^\top\bfX\bv}{\sigma^*(\|\bfX\bu^*\|_2+\|\bfX\bv\|_2)}\ge {\frac{\bxi^\top\bfX\bv}{\sigma^*(\|\bfX\bu^*\|_2+\|\bfX\bv\|_2)}}{:=\eta_1}.
\end{align*}
Using the definition of $\bv$, we get the relations
$\bxi^\top\bfX\bv= \bxi^\top\bfX_J\bv_J=(\alpha/\sigma^*)\|\Pi_J\bxi\|_2^2$ and
$\|\bfX\bv\|_2= \|\bfX_J\bv_J\|_2 = (\alpha/\sigma^*)\|\Pi_J\bxi\|_2$ implying that
$$
\eta_1=\frac{\alpha\|\Pi_J(\bxi/\sigma^*)\|_2^2}{\|\bfX\bu^*\|_2+\alpha\|\Pi_J(\bxi/\sigma^*)\|_2}.
$$
To complete the proof, it remains to find appropriate lower bounds for the terms $\|\Pi_J(\bxi/\sigma^*)\|_2^2$ and $\alpha$.
Since $\|\Pi_J(\bxi/\sigma^*)\|_2^2\sim \chi^2_{|J|}$, we have
$\bfP\big\{\|\Pi_J\bxi\|_2^2\ge  \sigma^*{}^2\big(|J|-2\sqrt{|J|\log(2/\delta')}\big)\big\}\ge 1-(\delta'/2)$. Therefore, for every
$|J|\ge 10> (8/3)^2\log 4$, we have $\bfP\big\{\|\Pi_J\bxi\|_2^2\ge  \sigma^*{}^2|J|/4\big\}\ge 1-1/4$. Let  $\bfX_J =\bfV\bLambda\bfU^\top$
be the singular value decomposition of $\bfX_J$ and $\tilde\bxi = \bfV^\top\bxi/\sigma^*\in\mr^{|J|}$ with Gaussian distribution
$\mathcal N(0,\bfI_{|J|})$. We remark that
\begin{align*}
\|(\bfX_J^\top\bfX_J)^\dag\bfX_J^\top\bxi \|_1 & = \sigma^*\|\bfU\bLambda^{-1}\tilde\bxi \|_1\\
&\le \sigma^*|J|^{1/2}\|\bfU\bLambda^{-1}\tilde\bxi \|_2\\
&= \sigma^*|J|^{1/2}\|\bLambda^{-1}\tilde\bxi \|_2\\
&\le \sigma^*(|J|/n\lambda^2_{\min,J})^{1/2}\|\tilde\bxi \|_2
\end{align*}
Therefore, using tail bounds for the $\chi^2$ distribution, we find
\begin{align*}
\bfP\Big(\|\bzeta \|_1  \le (|J|/n\lambda^2_{\min,J})^{1/2}\big(
\underbrace{|J|^{1/2}+\sqrt{2\log(4)}}_{\le 2{|J|^{1/2}}}\big)\Big)\ge 1-\frac14.
\end{align*}
From this inequality we infer that $\alpha\ge n^{1/2}\lambda_{\min,J}(\bar c\|\bu^*_T\|_1-\|\bu^*_{T^c}\|_1) /(2|J|)$
with probability at least $3/4$. Combining the lower bounds obtained for $\alpha$ and $\|\Pi_J\bxi\|_2^2$, we get
$$
\bfP\Bigg(\eta_1\ge \frac{|J|}{4}\bigg\{{\frac{2|J| \cdot\|\bfX\bu^*\|_2}{n^{1/2}\lambda_{\min,J}\bar c(\|\bu^*_T\|_1-\bar c^{-1}\|\bu^*_{T^c}\|_1)}
+ \frac{|J|^{1/2}}{2}} \bigg\}^{-1}\Bigg)\ge 1/2.
$$
This implies that
$$
\bar\varrho_{T,\bar c} \ge \bigg\{\frac{8\|\bfX\bu^*\|_2}{n^{1/2}\lambda_{\min,J}(\|\bu^*_T\|_1-\bar c^{-1}\|\bu^*_{T^c}\|_1)}
+ 2|J|^{-1/2}\bigg\}^{-1}.
$$
Since this is true for every $\bu^*$ belonging to the cone, we can take the supremum of the right hand-side to get
$$
\bar\varrho_{T,\bar c} \ge \frac{1}{8\lambda_{\min,J}^{-1}(\kappa_{T,\bar c}/|T|)^{1/2}
+ 2|J|^{-1/2}},
$$
and the desired result follows.
\end{proof}

\section*{Acknowledgments}
Johannes Lederer acknowledges partial financial support as fellow of the Swiss National Science Foundation.
The work of Arnak Dalalyan was partially supported by the grant Investissements d'Avenir (ANR-11-IDEX-0003/Labex Ecodec/ANR-11-LABX-0047).

\appendix

\section{Computation of the compatibility factors by sequential convex programming} The contribution of the present paper is theoretical. However it may be of practical interest to evaluate the risk bounds presented in our main results in order to understand how accurate the Lasso
prediction is. To this end, one may often need to compute, at least approximately, the compatibility factor $\kappa_{T,\bar c}$ or its weighted
counterpart $\bar\kappa_{T,\gamma,\bomega}$. In general, this task is difficult to accomplish since the computation of the aforementioned
factors amounts to minimizing a nonconvex function over a nonconvex subset of $\mr^p$ with large dimensionality $p$. (For a different
procedure, efficiently verifiable conditions entailing theoretical guarantees for sparse recovery are proposed in \cite{JudNem11b}.)
There is, however, a particular case where this task may be solved with a reasonable computational complexity.
This corresponds to subsets $T$ of small or
moderately large cardinalities. In fact, in the remaining of this subsection we will show that for every $T\subset [p]$ and every
$\bar c,\epsilon>0$, one can find an interval $[\kappa_*,\kappa^*]$ of length at most $\epsilon$ and containing
$\bar\kappa_{T,\bar c}$ by solving at most $2^{|T|}\log_2(|T|/\epsilon)$ convex programs. Furthermore, each of these convex
programs is a second-order cone program (SOCP) and the global computation can be split into $2^{|T|}$ parallel programs, each program involving
$\log_2(|T|/\epsilon)$ SOCPs. Note that an alternative approach proposed in \cite{Gautier} is to lower-bound the compatibility
factor by a quantity that can be efficiently computed by linear programming.

The procedure we propose relies on the well-known bisection algorithm. Since we know that $\bar\kappa_{T,\bar c}$ always belong to
the interval $[0,|T|]$, we start by setting $\kappa_*=0$ and $\kappa^*=|T|$. Then, at each step of iteration, we set
$\kappa=(\kappa_*+\kappa^*)/2$ and $\mu=(n\kappa/|T|)^{1/2} \bar c^{-1}$ and solve the problem
\begin{align}\label{quasiconv1}
\begin{matrix}
\displaystyle\text{minimize} \quad \|\bfX \bdelta\|_2 +\mu\|\bdelta_{T^c}\|_1\\[6pt]
\text{subject to}\quad \|\bdelta_T\|_1=1.
\end{matrix}
\end{align}
If the solution $\bdelta^\mu$ of this problem satisfies $\|\bfX \bdelta^\mu\|_2 +\mu\|\bdelta^\mu_{T^c}\|_1 \le  \mu \bar c$, then
we leave $\kappa_*$ unchanged and decrease  $\kappa^*$ by setting
$$
\kappa^*=\frac{|T|\cdot\|\bfX\bdelta^\mu\|_2^2}{n(1-\bar c^{-1}\|\bdelta^\mu_{T^c}\|_1)^2}.
$$
(Note that the right hand-side is always less than or equal to $\kappa$). If, in contrast, the solution $\bdelta^\mu$ of
problem (\ref{quasiconv1}) satisfies $\|\bfX \bdelta^\mu\|_2 +\mu\|\bdelta^\mu_{T^c}\|_1 >  \mu \bar c$,
then we leave $\kappa^*$ unchanged and increase  $\kappa_*$ by setting $\kappa_*=\kappa$. We iterate this process until we get
$\kappa^*-\kappa_*\le \epsilon$. Since at the first step the gap $\kappa^*-\kappa_*$ equals $|T|$ and at each step this gap
is divided by at least a factor $2$, the total number of iterations to get precision $\epsilon$ is not larger than $\log_2(|T|/\epsilon)$.

\begin{algorithm}[t]
\caption{Pseudo-code for computing the compatibility factor}
\label{algo1}
\begin{algorithmic}[1]
\Require $n\times p$ matrix $\bfX$, set $T\subset [p]$, constant $\bar c>0$, precision $\epsilon>0$
\Ensure interval $[\kappa_*,\kappa^*]$ containing the compatibility factor $\kappa_{T,\bar c}$
\State $\kappa_*\gets 0$ and $\kappa^*\gets |T|$
\While {$\kappa^*-\kappa_*<\epsilon$}
\State $\kappa\gets (\kappa^*+\kappa_*)/2$
\State $\mu\gets (n\kappa/|T|)^{1/2}\bar c^{-1}$
\For {$\bs\in\{\pm1\}^{|T|}$}
\State $v^{\bs} \gets \min \{\|\bfX \bdelta\|_2 +\mu\|\bdelta_{T^c}\|_1\}$  subject to $\bs^\top\bdelta_T=1$ and $s_j\delta_j\ge 0$, $\forall j\in T$
\State $\bdelta^{\bs} \gets \text{arg}\min \{\|\bfX \bdelta\|_2 +\mu\|\bdelta_{T^c}\|_1\}$  subject to $\bs^\top\bdelta_T=1$ and $s_j\delta_j\ge 0$, $\forall j\in T$
\EndFor
\If {$\min_{\bs} v^{\bs}> \mu\bar c$}
\State $\kappa_*\gets\kappa$
\Else
\State $\bdelta^\mu\gets \text{arg}\min_{\bdelta\in \{\bdelta^{\bs}:\bs\in \{\pm 1\}^{|T|}\} } \big(\|\bfX \bdelta\|_2
+\mu\|\bdelta_{T^c}\|_1\big)$
\State $\kappa^*\gets |T|\cdot\|\bfX\bdelta^\mu\|_2^2/(n\{1-\bar c^{-1}\|\bdelta^\mu_{T^c}\|_1\}^2)$
\EndIf
\EndWhile
\end{algorithmic}
\end{algorithm}

Let us now analyze the complexity of the optimization problem (\ref{quasiconv1}). The objective function of this problem is convex but
the set of feasible solutions is not. Interestingly, for every $\bs\in\{\pm1\}^{|T|}$, if we restrict the optimization to the orthant
$\{\bdelta: s_j \delta_j\ge 0,\ \forall j\in T\}$, the constraints become convex as well. Thus, our proposal consists in replacing
(\ref{quasiconv1}) by $2^{|T|}$ optimization problems
\begin{align}\label{quasiconv2}
\begin{matrix}
\displaystyle\text{minimize} \quad \|\bfX \bdelta\|_2 +\mu\|\bdelta_{T^c}\|_1\\[6pt]
\text{subject to}\quad \bs^\top\bdelta_T=1,\ \text{ and }\ s_j\delta_j\ge 0,\ \forall j\in T.
\end{matrix}
\end{align}
For every $\bs\in\{\pm 1\}^{|T|}$, (\ref{quasiconv2}) can be rewritten as a SOCP using standard arguments.
Denoting by $\bdelta^{\mu,\bs}$ any solution of (\ref{quasiconv2}), we determine a solution of (\ref{quasiconv1})
by minimizing the common objective function of the above optimization problems over the finite set
$\{\bdelta^{\mu,\bs}:\bs\in \{\pm 1\}^{|T|}\}$, that is
$$
\bdelta^\mu = \text{arg}\min_{\bdelta\in \{\bdelta^{\mu,\bs}:\bs\in \{\pm 1\}^{|T|}\} } \big(\|\bfX \bdelta\|_2 +\mu\|\bdelta_{T^c}\|_1\big).
$$
lead to the procedure summarized in Algorithm~\ref{algo1}.

\bibliographystyle{alpha}      
\bibliography{Literature}

\end{document}